\documentclass[11pt]{article}
\usepackage{amsmath,amssymb,amsthm,amscd}
\usepackage[mathscr]{eucal}
\setlength{\topmargin}{-0.5cm}
\setlength{\textheight}{22cm}
\setlength{\evensidemargin}{0.5cm}
\setlength{\oddsidemargin}{0.5cm}
\setlength{\textwidth}{15cm}
\newtheorem{theorem}{Theorem}[section]
\newtheorem{lemma}[theorem]{Lemma}
\newtheorem{proposition}[theorem]{Proposition}

\theoremstyle{plain}

\theoremstyle{definition}
\newtheorem{definition}[theorem]{Definition}

\numberwithin{equation}{section}

\newcommand{\Int}{\operatorname{Int}}

\newcommand{\Ad}{\operatorname{Ad}}

\newcommand{\G}{\mathcal{G}}

\newcommand{\N}{\mathbb{N}}
\newcommand{\T}{\mathbb{T}}
\newcommand{\Z}{\mathbb{Z}}
\newcommand{\Zp}{{\mathbb{Z}}_+}

\def\L{\mathcal{L}}
\def\G{\mathcal{G}}

\def\F{\mathcal{F}}

\def\X{\mathcal{X}}

\def\CZ{{\mathcal{Z}}}

\def\R{\mathcal{R}}

\def\Max{{{\operatorname{Max}}}}

\def\Ext{{{\operatorname{Ext}}}}
\def\Ad{{{\operatorname{Ad}}}}

\def\det{{{\operatorname{det}}}}
\def\exp{{{\operatorname{exp}}}}
\def\red{{{\operatorname{red}}}}
\def\full{{{\operatorname{full}}}}


\def\LGBS{({\mathcal{L}}^-, {\mathcal{L}}^+)}
\def\L{{\mathcal{L}}}

\def\XL{\mathcal{X}_{\mathcal{L}}}
\def\ZL{\mathcal{Z}_{\mathcal{L}}}
\def\GL{\mathcal{G}_{\mathcal{L}}}
\def\RL{\mathcal{R}_{\mathcal{L}}}

\def\FL{{{\mathcal{F}}_{{\mathcal{L}}}}}

\def\mbbZ2{\mathbb{Z}^2[<]}


\title{An \'etale equivalence relation on a configuration space arising from a subshift
and related $C^*$-algebras
\\
}
\author{Kengo Matsumoto \\
Department of Mathematics \\ Joetsu University of Education \\
Joetsu, 943-8512, Japan }

\begin{document}
\maketitle

\date{}

\def\det{{{\operatorname{det}}}}

\begin{abstract}   
A $\lambda$-graph bisystem $\mathcal{L}$ 
 consists of two labeled Bratteli diagrams $(\mathcal{L}^-,\mathcal{L}^+)$, 
that presents a two-sided subshift $\Lambda_\mathcal{L}$.
We will construct a compact totally disconnected metric space  
with a shift homeomorphism consisting  of two-dimensional configurations 
 from a $\lambda$-graph bisystem.
The configuration space has 
a certain AF-equivalence relation written $R_\L$ 
with a natural shift homeomorphism
$\sigma_\L$ coming from the  shift homeomorphism $\sigma_{\Lambda_{\mathcal{L}}}$ 
on the subshift $\Lambda_\L$. 
The equivalence relation $R_\L$ yields an AF-algebra $\FL$
with an automorphism $\rho_\L$ on it. 
We will study invariance of the \'etale equivalence relation $R_\L$,
the groupoid $\G_\L=R_\L\rtimes_{\sigma_\L}\Z$
and the groupoid $C^*$-algebras 
$C^*(R_\L)$,  $C^*(\G_\L)$
under topological conjugacy of the presenting two-sided subshifts. 
 \end{abstract}

{\it Mathematics Subject Classification}:
 Primary 37B10; Secondary 46L35,  46L55.

{\it Keywords and phrases}:
subshift, Bratteli diagram,  equivalence relation,  groupoid,
Smale space, AF-algebra,  $C^*$-algebra, Ruelle algebra.

\section{Introduction}
In \cite{MaPre2019a}, the author introduced a notion of $\lambda$-graph bisystem $\L$
that consists of two kinds of labeled Bratteli diagrams 
$\L^-, \L^+$ satisfying a certain compatibility condition between them.  
It is a graphical presentation of a two-sided subshift.
In a recent paper \cite{MaPre2019b},
he constructed an AF $C^*$-algebra with shift automorphism
from $\lambda$-graph bisystem, 
that is regarded as a subshift version of the asymptotic Ruelle algebra for shifts of finite type.
Asymptotic Ruelle algebras have been introduced  in Ruelle's paper \cite{Ruelle1}
and initiated to study by I. Putnam \cite{Putnam1}.
The Ruelle algebras are constructed from hyperbolic dynamical systems called Smale spaces.   
We have to emphasize that a subshift can not be any Smale space 
unless it is a shift of finite type.
Hence the construction of the Ruelle algebras 
by using asymptotic \'etale groupoids from shifts of finite type in 
\cite{Ruelle1} and \cite{Putnam1}, \cite{Putnam2} (cf. \cite{PutSp})
does not work for general subshifts.
That is a strong motivation of producing the presentation of the paper.

In this paper, we will construct a configuration space $\XL$ consists of two-dimensional tilings
 from a $\lambda$-graph bisystem $\L =(\L^-,\L^+)$ presenting a subshift.
The configuration space is a compact totally disconnected metric space
having a certain AF-equivalence relation written $R_\L$ on $\XL$ 
that yields the AF-algebra $\FL$
introduced in the previous paper \cite{MaPre2019b}.
The configuration space also has a natural shift homeomorphism
$\sigma_\L$ coming from the  shift homeomorphism $\sigma_{\Lambda_\L}$ 
on the subshift $\Lambda_\L$ presented by the $\lambda$-graph bisystem $\L$. 
There is a factor map  
$\pi: (\XL,\sigma_\L)\longrightarrow (\Lambda_\L,\sigma_{\Lambda_\L})$
that connects the dynamics of the subshift $\Lambda_\L$ to that of the
configuration space $\XL$. 
We will construct an \'etale groupoid $\GL$ from the equivalence relation
$R_\L$ with the shift homeomorphism $\sigma_\L$ by
\begin{equation*}
\mathcal{G}_{\L} =\{
(x,n,z) \in \mathcal{X}_{\mathcal{L}}\times\Z\times\mathcal{X}_{\mathcal{L}}
\mid (\sigma_{\mathcal{L}}^n(x), z) \in R_{\mathcal{L}} \}. 
\end{equation*}
Its unit space $\GL^{(0)} =\{(x,0,x)\in \GL\}$ is identified with $\XL$.  
The groupoid $\GL$ is regarded as a generalization of  groupoids constructed from asymptotic equivalence relation on two-sided shifts of finite type introduced in \cite{Ruelle1} and \cite{Putnam1}, \cite{Putnam2} (cf. \cite{PutSp}) to general two-sided subshifts .

Let us denote by $\R_\L$ 
the groupoid $C^*$-algebra $C^*(\GL)$.
The shift homeomorphism 
$\sigma_\L$ on $\XL$ also gives rise to an automorphism written 
$\rho_\L$ on the AF-algebra $\FL$.
We then know that  the $C^*$-algebra $\R_\L$ is isomorphic to the crossed product
$\FL\rtimes_{\rho_\L}\Z$.
Denote by $\gamma_\L$ the dual action $\hat{\rho}_\L$
of the crossed product $\FL\rtimes_{\rho_\L}\Z$.
Under certain  essential freeness condition called equivalently essential freeness of the topological dynamical system
$(\X_\L, \sigma_\L)$, we may prove the following theorem.
\begin{theorem}[Theorem \ref{thm:GRmain}]
Suppose that $(\X_{\mathcal{L}_i},\sigma_{\mathcal{L}_i}), i=1,2$ 
are equivalently essentially free.
The following two assertions are equivalent.
\hspace{7cm}
\begin{enumerate}
\renewcommand{\theenumi}{\roman{enumi}}
\renewcommand{\labelenumi}{\textup{(\theenumi)}}
\item
There exists an isomorphism 
$\varphi:\mathcal{G}_{\mathcal{L}_1}\longrightarrow  \mathcal{G}_{\mathcal{L}_2}$
of the \'etale groupoids such that
$d_{\mathcal{L}_2} =\varphi\circ d_{\mathcal{L}_1}$,
where $d_{\mathcal{L}_i}:\mathcal{G}_{\mathcal{L}_i}\longrightarrow\Z$
is defined by $d_{\mathcal{L}_i}(x,n,z) = n$ for $(x,n,z) \in \mathcal{G}_{\mathcal{L}_i}$.
\item
There exists an isomorphism 
$\Phi: \R_{\mathcal{L}_1} \longrightarrow \R_{\mathcal{L}_2}
$ of $C^*$-algebras such that
$\Phi(C({\mathcal{X}}_{\mathcal{L}_1})) =C({\mathcal{X}}_{\mathcal{L}_2})$
and
$\Phi\circ\gamma_{{\L_1}_t}=\gamma_{{\L_2}_t}\circ\Phi$,
$t \in \T$.
\end{enumerate}
\end{theorem}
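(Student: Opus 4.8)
The plan is to read the statement as a groupoid reconstruction theorem in the spirit of Renault's characterization of Cartan subalgebras, refined so as to respect the $\Z$-grading encoded by the cocycle $d_\L$ and, dually, by the gauge action $\gamma_\L$. Throughout I identify $\R_{\L_i}=C^*(\mathcal{G}_{\L_i})$ and $C(\mathcal{X}_{\L_i})=C(\mathcal{G}_{\L_i}^{(0)})$, and I use that $\gamma_{\L_i}$ is the dual action $\hat{\rho}_{\L_i}$ on the crossed product $\mathcal{F}_{\L_i}\rtimes_{\rho_{\L_i}}\Z$, so that its fixed-point algebra is $\mathcal{F}_{\L_i}=C^*(R_{\L_i})$. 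The direction (i) $\Rightarrow$ (ii) is formal functoriality, while the substance of the theorem is the converse, which rests on the essential freeness hypothesis.

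For (i) $\Rightarrow$ (ii), a cocycle-preserving isomorphism $\varphi$ of \'etale groupoids induces an isomorphism $\Phi=\varphi_*$ of the groupoid $C^*$-algebras by pushing forward compactly supported functions, $(\varphi_* f)(\gamma')=f(\varphi^{-1}(\gamma'))$. Since $\varphi$ restricts to a homeomorphism of unit spaces $\mathcal{G}_{\L_1}^{(0)}\to\mathcal{G}_{\L_2}^{(0)}$, the induced map $\Phi$ sends $C(\mathcal{X}_{\L_1})$ onto $C(\mathcal{X}_{\L_2})$. The gauge action is recovered from the cocycle by $\gamma_{\L_i,t}(f)(\gamma')=t^{d_{\L_i}(\gamma')}f(\gamma')$ on $C_c(\mathcal{G}_{\L_i})$; hence the cocycle identity $d_{\L_2}\circ\varphi=d_{\L_1}$ of (i) immediately yields $\Phi\circ\gamma_{\L_1,t}=\gamma_{\L_2,t}\circ\Phi$ for all $t\in\T$.

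For (ii) $\Rightarrow$ (i), the hypothesis that $(\mathcal{X}_{\L_i},\sigma_{\L_i})$ is equivalently essentially free is exactly what makes the full groupoid $\mathcal{G}_{\L_i}$ topologically principal, so that $(\R_{\L_i},C(\mathcal{X}_{\L_i}))$ is a Cartan pair, with $C(\mathcal{X}_{\L_i})$ maximal abelian and equipped with the faithful canonical conditional expectation given by restriction to the unit space. Renault's reconstruction theorem then upgrades the Cartan-preserving isomorphism $\Phi$ to a topological isomorphism $\varphi:\mathcal{G}_{\L_1}\to\mathcal{G}_{\L_2}$ of \'etale groupoids inducing $\Phi$. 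It remains to show that $\varphi$ preserves the cocycles. For this one uses that $d_{\L_i}$ is itself recovered from the gauge action: the $n$-th spectral subspace $\{a:\gamma_{\L_i,t}(a)=t^n a\}$ is the closed span of the functions supported on $d_{\L_i}^{-1}(n)$. Since $\Phi$ intertwines $\gamma_{\L_1}$ and $\gamma_{\L_2}$, it carries spectral subspaces to spectral subspaces, which forces $\varphi$ to map $d_{\L_1}^{-1}(n)$ onto $d_{\L_2}^{-1}(n)$, that is, $d_{\L_2}\circ\varphi=d_{\L_1}$.

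The main obstacle I anticipate is the verification, under equivalently essential freeness, that $C(\mathcal{X}_\L)$ is maximal abelian in the whole crossed product $\R_\L$ --- and not merely in the AF fixed-point algebra $\mathcal{F}_\L$ --- so that the Cartan machinery applies to the full graded groupoid $\mathcal{G}_\L$ rather than only to $R_\L$. This is precisely the point at which the freeness of the $\Z$-action generated by $\sigma_\L$ must interact with the AF-equivalence relation $R_\L$: one has to rule out any nonzero element of nonzero gauge degree commuting with all of $C(\mathcal{X}_\L)$, which amounts to topological principality of $\mathcal{G}_\L$ along the $\Z$-directions. Once this is secured, the remaining steps are the by now standard passage between Cartan pairs and \'etale groupoids, together with the spectral-subspace argument for the cocycle.
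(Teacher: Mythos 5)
Your proposal is correct, and its skeleton --- pushforward of compactly supported functions for (i) $\Rightarrow$ (ii), and Renault's reconstruction theorem applied to the pairs $(\R_{\L_i}, C(\X_{\L_i}))$ under the essential-freeness hypothesis for (ii) $\Rightarrow$ (i) --- is the same as the paper's (Lemma \ref{lem:fgPhi}, Lemma \ref{lem:groupoidC*6.2}, Proposition \ref{prop:main6.1}). Where you genuinely diverge is at the key step, the identification of the cocycle of the reconstructed isomorphism $\varphi$. The paper writes $\varphi(x,n,z) = (h(x), c_{\L_1}(x,n,z), h(z))$ and proves $c_{\L_1} = d_{\L_1}$ in two stages: first, gauge-equivariance of $\Phi$ gives $\Phi(\F_{\L_1}) = \F_{\L_2}$ (fixed-point algebras), whence $\varphi(R_{\L_1}) = R_{\L_2}$ and so $c_{\L_1}$ vanishes in degree zero; second, a hands-on unitary computation --- comparing $\Phi$ with the pushforward isomorphism $\Phi_h = \Ad(V_{h^{-1}})$, invoking maximal abelianness of $C(\X_{\L_1})$ to write $\Phi(U_{\L_1}) = \Phi_h(U_{\L_1}u_1)$ with $u_1 \in C(\X_{\L_1})$ unitary, and deducing $\exp(2\pi\sqrt{-1}t)\,U_{\L_1} = U_t(c_{\L_1})U_{\L_1}U_t(c_{\L_1})^*$ --- which forces $c_{\L_1}(x,n,z) - c_{\L_1}(\sigma_{\L_1}(x),n-1,z) = 1$ and hence $c_{\L_1}(x,n,z)=n$. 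You instead identify the cocycle by spectral subspaces: $\Phi$ carries the $n$-th spectral subspace of $\gamma_{\L_1}$ onto that of $\gamma_{\L_2}$, and elements of the $n$-th spectral subspace, regarded as functions on the groupoid via the canonical embedding into $C_0(\G_{\L_i})$, have open support inside $d_{\L_i}^{-1}(n)$; since the Weyl-groupoid isomorphism sends the germ of a normalizer $f$ to the germ of $\Phi(f)$, this yields $d_{\L_2}\circ\varphi = d_{\L_1}$ in all degrees at once, recovering $\varphi(R_{\L_1}) = R_{\L_2}$ (the degree-zero case) as a byproduct rather than needing it as an input. Your route is shorter and matches the standard arguments in the diagonal-preserving, gauge-equivariant isomorphism literature (Carlsen--Rout); the one compressed step --- the word ``forces'' --- does require spelling out that the reconstructed $\varphi$ acts on germs of normalizers compatibly with $\Phi$ and that spectral-subspace elements are supported in $d^{-1}(n)$. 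The paper's computation is longer but stays entirely inside the crossed-product picture, using only maximal abelianness (Renault's Proposition 4.7) rather than the internals of the Weyl-groupoid construction. (You also silently correct the statement's typo: the cocycle condition must indeed read $d_{\L_2}\circ\varphi = d_{\L_1}$, not $d_{\L_2} = \varphi\circ d_{\L_1}$.)
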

Since any two-sided subshift $\Lambda$ 
associates to a $\lambda$-graph bisystem in a canonical way,
that is written $\L_\Lambda = (\L_\Lambda^-,\L_\Lambda^+)$,
and called the canonical $\lambda$-graph bisystem for subshift $\Lambda$.
Its presenting subshift $\Lambda_{\L_\Lambda}$ coincides with 
the original subshift $\Lambda$.
Hence any subshift yields the configuration space 
$\X_{\L_\Lambda}$ with shift homeomorphism $\sigma_{\L_\Lambda}$, 
the equivalence relation $R_{\L_\Lambda}$, 
the AF-algebra $\F_{\L_\Lambda}$ with shift automorphism $\rho_{\L_\Lambda}$
and the $C^*$-algebra $\R_{\L_\Lambda}$.  
They are written 
$\X_\Lambda, \sigma_{\X_\Lambda}, R_\Lambda, \F_\Lambda, \rho_\Lambda
$ and $\R_\Lambda$, respectively.
Through the factor map $\pi:\X_\Lambda\longrightarrow \Lambda$,
the commutative $C^*$-algebra $C(\Lambda)$ of complex valued continuous functions
on $\Lambda$ is regarded as a $C^*$-subalgebra of $C(\X_\Lambda)$ that locates 
in the diagonal of the AF-algebra $\F_{\Lambda}$.
We then have inclusions
$$
C(\Lambda) \subset C(\X_\Lambda) \subset \F_\Lambda \subset \R_\Lambda
$$  
of $C^*$-subalgebras.
The constructions of the configuration space with shift homeomorphism,
the \'etale equivalence relation are dynamical (Proposition \ref{prop:topconjugate1}). 
Hence the isomorphism class of the \'etale groupoid is also a dynamical object
(Proposition \ref{prop:topconjugate2}).
Under certain essential freeness condition mentioned above on the dynamical system
$(\X_\Lambda,\sigma_{\X_\Lambda})$, we will show the following result.
\begin{theorem}[Theorem \ref{thm:GRmain2} and Theorem \ref{thm:GRmain3}] \label{thm:main1}
Let  $\Lambda_1, \Lambda_2$  be two-sided subshifts.
Suppose that $(\X_{\Lambda_i},\sigma_{\X_{\Lambda_i}}), i=1,2$ are equivalently essentially free.
The two-sided subshifts $\Lambda_1, \Lambda_2$ are topologically conjugate
if and only if there exists an isomorphism  
$\Phi:  \R_{\Lambda_1} \longrightarrow \R_{\Lambda_2}$ 
of $C^*$-algebras such that 
$$
\Phi(C(\X_{\Lambda_1}) )= C(\X_{\Lambda_2}), \qquad
\Phi(C(\Lambda_1) = C(\Lambda_2), \qquad
\Phi\circ \gamma_{{\Lambda_1}_t} 
= \gamma_{{\Lambda_2}_t}\circ\Phi, \quad t \in \mathbb{T}.
$$
Hence the quadruplet  
$(\R_\Lambda, C(\X_\Lambda), C(\Lambda), \gamma_\Lambda)$ 
is a complete invariant of topological conjugacy of subshift $\Lambda$
under the condition that $(\X_\Lambda,\sigma_{\X_\Lambda})$ is equivalently essentially free.
If in particular, a subshift $\Lambda$ satisfy $\pi$-condition (I), then 
the triplet  
$(\R_\Lambda,  C(\Lambda), \gamma_\Lambda)$ is a complete invariant of 
topological conjugacy of subshift $\Lambda$.
\end{theorem}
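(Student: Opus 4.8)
The plan is to bootstrap everything from the groupoid-level characterization already established in Theorem~\ref{thm:GRmain}, combined with the fact that the whole construction $\Lambda\mapsto(\X_\Lambda,\sigma_{\X_\Lambda},R_\Lambda)$ is dynamical (Propositions~\ref{prop:topconjugate1} and \ref{prop:topconjugate2}), and then to account for the two extra conditions $\Phi(C(\Lambda_1))=C(\Lambda_2)$ and the $\gamma$-intertwining through the factor map $\pi$. For the forward implication I would start from a topological conjugacy $h:\Lambda_1\to\Lambda_2$. By Proposition~\ref{prop:topconjugate1} it lifts to a homeomorphism $\tilde h:\X_{\Lambda_1}\to\X_{\Lambda_2}$ intertwining the shift homeomorphisms $\sigma_{\X_{\Lambda_i}}$ and the equivalence relations $R_{\Lambda_i}$, and compatible with the factor maps in the sense $h\circ\pi_1=\pi_2\circ\tilde h$. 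Proposition~\ref{prop:topconjugate2} then upgrades this to a cocycle-preserving isomorphism $\varphi:\G_{\Lambda_1}\to\G_{\Lambda_2}$ with $d_{\Lambda_2}\circ\varphi=d_{\Lambda_1}$, and Theorem~\ref{thm:GRmain} produces $\Phi:\R_{\Lambda_1}\to\R_{\Lambda_2}$ with $\Phi(C(\X_{\Lambda_1}))=C(\X_{\Lambda_2})$ and $\Phi\circ\gamma_{{\Lambda_1}_t}=\gamma_{{\Lambda_2}_t}\circ\Phi$. Since $\Phi$ is spatially implemented on the diagonal by $\tilde h$, and $\tilde h$ respects the factor maps, the pullback description $C(\Lambda_i)=\pi_i^*C(\Lambda_i)\subset C(\X_{\Lambda_i})$ yields $\Phi(C(\Lambda_1))=C(\Lambda_2)$, which completes this direction.

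Conversely, suppose $\Phi$ satisfying all three conditions is given. The condition $\Phi(C(\X_{\Lambda_1}))=C(\X_{\Lambda_2})$ together with the $\gamma$-intertwining lets me invoke Theorem~\ref{thm:GRmain} in reverse to obtain a cocycle-preserving groupoid isomorphism $\varphi:\G_{\Lambda_1}\to\G_{\Lambda_2}$; restricting $\varphi$ to the unit spaces gives a homeomorphism $\tilde h:\X_{\Lambda_1}\to\X_{\Lambda_2}$ which, because $d$ is preserved, conjugates $\sigma_{\X_{\Lambda_1}}$ to $\sigma_{\X_{\Lambda_2}}$. The remaining task is to descend $\tilde h$ through the factor maps, and this is exactly where $\Phi(C(\Lambda_1))=C(\Lambda_2)$ enters: by Gelfand duality on the commutative subalgebras, $\tilde h$ carries $\pi_1^*C(\Lambda_1)$ onto $\pi_2^*C(\Lambda_2)$, which is equivalent to $\tilde h$ mapping the fibres of $\pi_1$ onto the fibres of $\pi_2$. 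Hence $\tilde h$ induces a well-defined homeomorphism $h:\Lambda_1\to\Lambda_2$ with $h\circ\pi_1=\pi_2\circ\tilde h$, and since $\pi_i$ intertwines $\sigma_{\X_{\Lambda_i}}$ with $\sigma_{\Lambda_i}$, the map $h$ is a topological conjugacy.

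The hard part will be the descent step, namely verifying that preservation of the subalgebras $C(\Lambda_i)=\pi_i^*C(\Lambda_i)$ is genuinely equivalent to $\tilde h$ being fibre-preserving over the subshift factors, rather than merely permuting points among distinct fibres in an incompatible way. This forces me to identify $C(\Lambda_i)$ intrinsically inside the diagonal $C(\X_{\Lambda_i})$ and to use equivalent essential freeness in an essential way: it guarantees that the abstract groupoid isomorphism from Theorem~\ref{thm:GRmain} is spatially realized by the homeomorphism $\tilde h$ of unit spaces, so that the commutative-subalgebra conditions translate faithfully into point-set statements about $\tilde h$ and the factor maps. Without this rigidity the correspondence between the $C^*$-algebraic data and the dynamics could fail to be point-wise, and the descent would break down.

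For the final assertion about the triplet, the plan is to show that under $\pi$-condition (I) the subalgebra $C(\X_\Lambda)$ is no longer needed as separate data because it is recoverable from $(\R_\Lambda,C(\Lambda),\gamma_\Lambda)$. Concretely, the fixed-point algebra of $\gamma_\Lambda$ recovers the AF-algebra $\F_\Lambda$, and I would argue that $\pi$-condition (I) forces $C(\X_\Lambda)$ to be characterized intrinsically inside $\F_\Lambda$ relative to $C(\Lambda)$ (for instance as a suitable relative commutant, or as the unique maximal abelian subalgebra compatible with $C(\Lambda)$ and the AF-structure). Once $C(\X_{\Lambda_i})$ is intrinsic, any $\Phi$ preserving $C(\Lambda_i)$ and intertwining $\gamma_{\Lambda_i}$ automatically preserves $C(\X_{\Lambda_i})$, so the triplet statement reduces to the quadruplet statement already established. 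Establishing this intrinsic characterization of $C(\X_\Lambda)$ under condition (I) is the technical heart of the last claim.
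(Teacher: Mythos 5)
Your forward direction and your plan for the triplet statement are essentially the paper's: the conjugacy is lifted via Proposition~\ref{prop:topconjugate1}, promoted to a $d$-preserving groupoid isomorphism, fed into Theorem~\ref{thm:GRmain} (Lemma~\ref{lem:fgPhi}), and the extra condition $\Phi(C(\Lambda_1))=C(\Lambda_2)$ follows from the spatial implementation; likewise your guess that under $\pi$-condition (I) the diagonal is recovered as a relative commutant is exactly the paper's Lemma~\ref{lem:CLambdaX}, which shows $C(\Lambda_\L)'\cap\R_\L=C(\X_\L)$ (note: the commutant is taken in $\R_\Lambda$, not in $\F_\Lambda$). The genuine gap is in your converse. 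You assert that since the groupoid isomorphism $\varphi$ preserves the degree cocycle $d$, its restriction $\tilde h$ to unit spaces ``conjugates $\sigma_{\X_{\Lambda_1}}$ to $\sigma_{\X_{\Lambda_2}}$.'' This is false. The groupoid is the semi-direct product $\G_{\Lambda}=R_{\Lambda}\rtimes_{\sigma}\Z$, in which $(x,n,z)\in\G_{\Lambda}$ means only $(\sigma_{\X_\Lambda}^n(x),z)\in R_{\Lambda}$; the groupoid remembers the shift only up to the AF relation. Applying $\varphi$ to $(x,1,\sigma_{\X_{\Lambda_1}}(x))$ and using $d_{\Lambda_2}\circ\varphi=d_{\Lambda_1}$ yields merely
\begin{equation*}
\bigl(\sigma_{\X_{\Lambda_2}}(\tilde h(x)),\, \tilde h(\sigma_{\X_{\Lambda_1}}(x))\bigr)\in R_{\Lambda_2},
\end{equation*}
not equality. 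Consequently the descended map $h_\Lambda$ obtained from $\Phi(C(\Lambda_1))=C(\Lambda_2)$ intertwines $\sigma_{\Lambda_1}$ and $\sigma_{\Lambda_2}$ only up to the image of $R_{\Lambda_2}$ under $\pi_2$, i.e.\ asymptotically, and your final sentence ``hence $h$ is a topological conjugacy'' does not follow.

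This is precisely the nontrivial content of the paper's proof of Proposition~\ref{prop:topconjugate2}, (ii)~$\Longrightarrow$~(i): from continuity of $\varphi$ in the \'etale topology one extracts continuous maps $p,q:\X_{\Lambda_1}\to\Z$ with $\square_{(p(x),q(x))}(\sigma_{\X_{\Lambda_2}}(\tilde h(x)))=\square_{(p(x),q(x))}(\tilde h(\sigma_{\X_{\Lambda_1}}(x)))$, compactness of $\X_{\Lambda_1}$ gives uniform bounds $p_1,q_1$ (and $p_2,q_2$ for the inverse), and pushing down through $\pi_i$ produces the equalities \eqref{eq:sigmaLambda2a}, \eqref{eq:sigmaLambda1b}: the coordinates of $\sigma_{\Lambda_2}\circ h_\Lambda(a)$ and $h_\Lambda\circ\sigma_{\Lambda_1}(a)$ agree outside a window of uniformly bounded size. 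Converting this ``eventual'' commutation into an honest topological conjugacy is then done by invoking \cite[Theorem 3.3]{Ma2019JOT} and its proof. Your proposal contains none of this machinery, and it also misplaces the difficulty: the Gelfand-duality descent through the fibres of $\pi_i$, which you flag as the hard part, is comparatively routine; the hard part is exactly the step you dispatched with the incorrect claim that $d$-preservation yields on-the-nose shift equivariance of $\tilde h$.
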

If a subshift is a shift of finite type, our construction of the groupoid 
reduces to the  \'etale groupoids of asymptotic equivalence relations on the shift of finite type
introduced in Ruelle \cite{Ruelle1} and Putnam \cite{Putnam1}, 
and our construction coincides with the Ruelle-Putnam's construction 
of the asymptotic groupoids from shifts of finite type.     
If we restrict our interest to irreducible shifts of finite type, 
we have the following theorem as a special case of the above theorem.
\begin{theorem}[{Theorem \ref{thm:mainmarkov}, \cite[Theoren 3.3]{Ma2019JOT}}]
Let $A, B$ be irreducible, non-permutation  matrices with entries in $\{0,1\}$. 
Then the following two conditions are equivalent:
\begin{enumerate}
\renewcommand{\theenumi}{\roman{enumi}}
\renewcommand{\labelenumi}{\textup{(\theenumi)}}
\item
The two-sided topological Markov shifts  
$(\Lambda_A, \sigma_A)$ and $(\Lambda_B, \sigma_B)$ are topologically conjugate.
\item
There exists an isomorphism 
$\Phi: \R_{\Lambda_A} \longrightarrow \R_{\Lambda_B}$
of $C^*$-algebras such that
$\Phi(C({\Lambda_A})) =C({\Lambda_B})$
and $\Phi\circ\gamma_{A_t} =\gamma_{B_t} \circ\Phi$,
$t \in \T$.
\end{enumerate}
\end{theorem}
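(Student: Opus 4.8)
The plan is to obtain this theorem as a special case of Theorem \ref{thm:main1}, by checking that an irreducible, non-permutation matrix $A$ with entries in $\{0,1\}$ yields a two-sided subshift $(\Lambda_A,\sigma_A)$ satisfying the hypotheses of Theorem \ref{thm:main1}. First I would recall that any two-sided subshift, in particular the topological Markov shift $\Lambda_A$, canonically determines a $\lambda$-graph bisystem $\L_{\Lambda_A}$ presenting it, and hence the configuration-space dynamical system $(\X_{\Lambda_A},\sigma_{\X_{\Lambda_A}})$, the equivalence relation $R_{\Lambda_A}$, the groupoid, and the $C^*$-algebra $\R_{\Lambda_A}$ together with the dual action $\gamma_A$ of $\T$ coming from the crossed-product description $\F_{\Lambda_A}\rtimes_{\rho_{\Lambda_A}}\Z$. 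As noted in the text, when the subshift is of finite type this construction reduces to the Ruelle--Putnam asymptotic \'etale groupoid attached to $\Lambda_A$, so that $\R_{\Lambda_A}$ is the associated asymptotic Ruelle algebra; this identification is what connects the present framework to the earlier result \cite[Theorem 3.3]{Ma2019JOT} and explains the double attribution of the statement.

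The two hypotheses of Theorem \ref{thm:main1} that must be verified are that $(\X_{\Lambda_A},\sigma_{\X_{\Lambda_A}})$ is equivalently essentially free and that $\Lambda_A$ satisfies $\pi$-condition (I). For the first, I would use that an irreducible topological Markov shift whose defining matrix is not a permutation matrix is topologically transitive and is not a system of purely periodic points: irreducibility furnishes a dense orbit, while the non-permutation hypothesis guarantees a state of out-degree at least two, hence enough branching that the set of points of $\X_{\Lambda_A}$ with trivial isotropy in the groupoid $\GL$ is a dense $G_\delta$. This is exactly the degenerate case that the non-permutation assumption is designed to exclude, since an irreducible permutation matrix produces only a single periodic orbit on which essential freeness fails. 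For the second, I would check that $\pi$-condition (I) for $\Lambda_A$ likewise follows from irreducibility together with the non-permutation assumption, so that the factor map $\pi:\X_{\Lambda_A}\to\Lambda_A$ is sufficiently non-degenerate for $C(\Lambda_A)$ to be reconstructed from the triplet data alone.

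With both conditions established, the concluding clause of Theorem \ref{thm:main1} applies directly: under $\pi$-condition (I) the triplet $(\R_\Lambda,C(\Lambda),\gamma_\Lambda)$ is a complete invariant of topological conjugacy, which yields at once the equivalence of (i) and (ii). Here the implication (i)$\Rightarrow$(ii) is the dynamical direction, following from the functoriality of the whole construction recorded in Propositions \ref{prop:topconjugate1} and \ref{prop:topconjugate2}, whereas (ii)$\Rightarrow$(i) is the reconstruction direction supplied by Theorem \ref{thm:main1}. The main obstacle I expect is precisely the verification of equivalent essential freeness and of $\pi$-condition (I) from the purely combinatorial hypotheses of irreducibility and non-permutation: this requires an explicit description of the configuration space $\X_{\Lambda_A}$ and of its periodic structure in the shift-of-finite-type case, together with a careful argument that the excluded permutation case is the sole obstruction to these genericity conditions.
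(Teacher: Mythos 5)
Your overall plan coincides with the paper's: Theorem \ref{thm:mainmarkov} is obtained there exactly as a corollary of Theorem \ref{thm:GRmain3} (the $\pi$-condition (I) clause of Theorem \ref{thm:main1}), after checking that an irreducible non-permutation $A$ makes the two hypotheses hold. The difference lies in how you propose to verify them, and here you miss the simplification the paper actually uses. By Proposition 7.3(i), for a topological Markov shift the factor map $\pi:\X_{\Lambda_A}\longrightarrow\Lambda_A$ is \emph{injective}, hence an isomorphism, and the canonical $\lambda$-graph bisystem $\L_{\Lambda_A}$ is just the directed graph of $A$. Injectivity of $\pi$ makes $\pi$-condition (I) automatic for \emph{any} Markov shift, with no appeal to irreducibility or branching: given $\gamma\notin\GL'$, one has $s(\gamma)\ne r(\gamma)$, so $\pi(s(\gamma))\ne\pi(r(\gamma))$ and one may take $\gamma_1=\gamma$ itself in every neighborhood. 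Your plan to extract $\pi$-condition (I) from irreducibility and non-permutation therefore looks for the wrong mechanism; those combinatorial hypotheses are needed only for equivalent essential freeness. For that condition your sketch is the right idea and can be made precise using the identification $\X_{\Lambda_A}\cong\Lambda_A$: under it, $R_{\Lambda_A}$ becomes the asymptotic relation, so the set $\{x\mid(\sigma^n_{\X_{\Lambda_A}}(x),x)\in R_{\Lambda_A}\}$ consists of points whose two tails are eventually $n$-periodic, a countable set, which has empty interior in the Cantor set $\Lambda_A$ when $A$ is irreducible and not a permutation. With both verifications re-routed through Proposition 7.3 in this way, your reduction to Theorem \ref{thm:main1} gives exactly the paper's proof, including the identification with the Ruelle--Putnam asymptotic groupoid that explains the citation of \cite{Ma2019JOT}.
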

The above result for topological Markov shifts were already seen in \cite[Theoren 3.3]{Ma2019JOT}.

As in Theorem \ref{thm:main1}, the K-groups 
for the $C^*$-algebras $\F_\L, \R_\L$ 
give rise to topologically conjugacy invariants for two-sided subshifts.
In particular, the $K_0$-group $K_0(\FL)$ is regarded as a bilateral dimension group
for the presenting subshift $\Lambda_\L$.
Their  K-group formulas  were  studied in \cite{MaPre2019b}.

Throughout the paper, $\mathbb{N}, \Zp$ denote the set of positive integers, and 
that of nonnegative integers, respectively.

\section{Subshifts and $\lambda$-graph bisystems}
Let $\Sigma$ be a finite set, called an alphabet.
 A labeled Bratteli diagram $(V, E, \lambda)$
 in general consists of vertex set $V= \cup_{l=0}^\infty V_l$,
 edg set $E=\cup_{l=0}^\infty E_{l,l+1}$ and labeling map
 $\lambda:E\longrightarrow \Sigma$,
where $V_l, l\in \Zp $ as well as $E_{l,l+1}, l\in \Zp$
are finite, nonempty pairwise disjoint sets.
We always assume that the top vertex set $V_0$ is a singleton.
Let $\Sigma^-, \Sigma^+$ be two finite sets. 
A $\lambda$-graph bisystem
$\mathcal{L}$ consists of two labeled Bratteli diagrams
$\mathcal{L}^- = (V, E^-, \lambda^-)$
and
$\mathcal{L}^+ = (V, E^+, \lambda^+)$ 
with common vertex set $V$
satisfying the following properties.
The first labeled Bratteli diagram 
$\mathcal{L}^- = (V, E^-, \lambda^-)$
is equipped with the maps 
$s :E^-_{l+1,l} \longrightarrow V_{l+1}$
and 
$t :E^-_{l+1,l} \longrightarrow V_{l}$
for which we call the source $s(e^-) \in V_{l+1}$
and
the terminal $t(e^-) \in V_l$ for an edge $e^- \in E_{l+1,l}^-,$
and a lebeling map
$\lambda^-:E_{l+1,l}^-\longrightarrow \Sigma^-$.
The other labeled Bratteli diagram 
$\mathcal{L}^+ = (V, E^+, \lambda^+)$
is equipped with the maps $s :E^+_{l,l+1} \longrightarrow V_{l}$
and $t :E^+_{l,l+1} \longrightarrow V_{l+1}$
for which we call the source $s(e^+) \in V_{l}$
and
the terminal $t(e^+) \in V_{l+1}$ for an edge $e^+ \in E_{l,l+1}^+,$
and a lebeling map
$\lambda^+:E_{l,l+1}^+\longrightarrow \Sigma^+$.
Hence the two labeled Bratteli diagrams have a common vertex sets
$V = \cup_{l=0}^\infty V_l$,
and different edges with different directions and edge labeling.
Their edges labeling are required to satisfy the following property called the local property
of $\lambda$-graph bisystem. 
For any two vertices $u \in V_l, v \in V_{l+2}$,
we set 
\begin{align*}
E^{+}_-(u,v) & =\{
(f^+, f^-) \in E^+_{l,l+1}\times E^-_{l+2,l+1} \mid
s(f^+) = u,\, t(f^+) = t(f^-),\, s(f^-) = v\}, \\ 
E^{-}_+(u,v) & =\{
(e^-, e^+) \in E^-_{l+1,l}\times E^+_{l+1,l+2} \mid
t(e^-) = u,\, s(e^-) = s(e^+),\,t(e^+) = v\}.
\end{align*}
The local property requires us that there exists
a bijective correspondence 
$\phi:E^{+}_-(u,v)\longrightarrow E^{-}_+(u,v)$
such that $\lambda^+(f^+) =\lambda^+(e^+),\,
\lambda^-(f^-) =\lambda^-(e^-)$, whenever  $\phi(f^+,f^-) = (e^+,e^-)$.
We further assume that 
the labeled Bratteli diagram $\mathcal{L}^-$ is right-resolving, that is,
if two distinct edges $e^-, f^- \in E_{l+1,l}^-$ satisfy $s(e^-) = s(f^-)$, 
then $\lambda^-(e^-) \ne \lambda^-(f^-)$, 
as well as 
$\mathcal{L}^+$ is left-resolving, that is,
if two distinct edges $e^+, f^+ \in E_{l,l+1}^+$ satisfy $t(e^+) = t(f^+)$, 
then $\lambda^+(e^+) \ne \lambda^+(f^+)$. 
We then call the pair $\LGBS$ a $\lambda$-graph bisystem.
It is sometimes written as $\L =\LGBS$.
In this paper, we further assume the following extra condition called FPCC for a $\lambda$-graph bisystem.
Let us assume that the alphabet sets 
$\Sigma^-, \Sigma^+$
are the same $\Sigma^- = \Sigma^+$ written $\Sigma$.
Let $\{v_1^l,\dots,v_{m(l)}^l\}$ be the vertex set $V_l$.
For $v_i^l\in V_l$, 
we define its follower set $F(v_i^l)$ in $\mathcal{L}^-$
and its predecessor set $P(v_i^l)$ in $\mathcal{L}^+$ by setting
\begin{align*}
F(v_i^l)  =& \{ (\lambda^-(e^-_l), \lambda^-(e^-_{l-1}),\cdots,\lambda^-(e^-_1) ) \in \Sigma^l \mid \\
  & \qquad e^-_n \in E^-_{n,n-1}, s(e^-_l) = v_i^l, t(e^-_n) = s(e^-_{n-1}), n=2,3,\dots,l \}, \\ 
P(v_i^l) = & \{ (\lambda^+(e^+_1), \lambda^+(e^+_{2}),\cdots,\lambda^+(e^+_l)) \in \Sigma^l  \mid \\
  & \qquad e^+_n \in E^+_{n-1,n}, t(e^+_l) = v_i^l, s(e^+_n) = t(e^+_{n-1}), n=2,3,\dots,l \}.
\end{align*}
The $\lambda$-graph bisystem is said to satisfy FPCC (Follower-Predecessor Compatibility Condition)
if  $F(v_i^l) =P(v_i^l)$ for all $v_i^l \in V_l.$
Throughout the paper, we always assume that 
$\lambda$-graph bisystems satisfy FPCC.
 
For a $\lambda$-graph bisystem $\LGBS$ over alphabet $\Sigma$,
let us denote by $\Lambda_\L$ the subshift 
whose admissible words are the set of words
$\cup_{v_i^l \in V}F(v_i^l)  (=\cup_{v_i^l \in V}P(v_i^l))$.
We say that the $\lambda$-graph bisystem  $ \L$
presents the subshift  $\Lambda_{\mathcal{L}}$. 
Conversely any subshift is presented by a $\lambda$-graph bisystem satisfying FPCC.
We briefly explain how to construct a $\lambda$-graph bisystem from an arbitrary subshift $\Lambda$
following \cite{MaPre2019a} and \cite{MaPre2019b}.
Let us denote by $\Z^2[<]$ the set of 
$(k,l)\in \Z^2$ such that $k<l$. 
For $n \in \Zp$, 
we denote by
$B_n(\Lambda)$ the set of admissible words of $\Lambda$ with lengh $n$.
For $x \in \Lambda$ and
$(k,l)\in \Z^2[<]$, 
the set $W_{k,l}(x)$ of $(k,l)$-central words of $x$ 
is defined by
\begin{align*}
W_{k,l}(x) = & \{(a_{k+1},a_{k+2}, \cdots, a_{l-1}) \in B_{n(k,l)}(\Lambda) \mid \\
& \qquad
(\cdots, x_{k-1}, x_k, a_{k+1},a_{k+2}, \cdots, a_{l-1}, x_l,x_{l+1}, \cdots) \}, 
\end{align*}
where
$n(k,l) = l - k-1$.
For $x, z \in \Lambda$, we call 
$x, z$ are $(k,l)$-centrally equivalent,
written $x\overset{c}{\underset{(k,l)}{\sim}}z$
if $
W_{k,l}(x) = W_{k,l}(z).
$ 
The set 
$
\Lambda/\overset{c}{\underset{(k,l)}{\sim}}
$ 
of $(k,l)$-centrally equivalence classes 
of $\Lambda$ is denoted by 
$\Omega_{k,l}$.
Let $m(k,l)$ be the cardinal number of the set 
$\Omega_{k,l}$, that is finite.
As
$$
x\overset{c}{\underset{(k,l)}{\sim}}z\qquad
 \Longleftrightarrow \qquad
\sigma(x)\overset{c}{\underset{(k-1,l-1)}{\sim}}\sigma(z),
$$
the equivalence classes 
$\Omega_{k,l}$ and $\Omega_{k-n,l-n}$
are naturally identified for any $n\in\Z$.
We define the vertex set
$V_n, n\in \Zp$ by 
$V_{n(k,l)} = \Omega_{k,l}$, for example
\begin{gather*}
V_0:= \{ \Lambda\},\quad 
 V_1:= \Omega_{-1,1},\quad
 V_2:= \Omega_{-2,1}(= \Omega_{-1,2}), \quad
\cdots,  \quad  
V_l:= \Omega_{-l,1}(= \Omega_{-1,l}), \quad \cdots.
\end{gather*}
The sequence $V_l, l\in \Zp$ 
will be the vertex sets of a $\lambda$-graph bisystem.
Let us denote by $m(l)$ the cardinality of the vertex set $V_l$,
so that $m(n(k,l)) = m(k,l)$ for $(k,l) \in \Z^2[<]$. 
Since $V_l$ consists of $m(l)$ equivalence classes
that are denoted by 
$\{ v_1^l, \dots, v_{m(l)}^l\}$.
For 
$x=(x_n)_{n\in \Z} \in v_i^{n(k,l)}$ and $\beta \in \Sigma$
such that 
$ (\beta, a_{k+2}, a_{k+3},\dots, a_{l-1}) \in W_{k,l}(x)$
for some 
$( a_{k+2},a_{k+3},\dots,a_{l-1}) \in B_{l-k-2}(\Lambda)$,
if 
\begin{equation*}
 (\dots, x_{k-1}, x_k, \beta,a_{k+2}, a_{k+3},\dots, a_{l-1}, x_l, x_{l+1},\dots ) \in \Lambda
\end{equation*}
belongs to the equivalence class
$v_h^{n(k+1,l)}\in V_{n(k+1,l)} (= V_{n(k,l)-1})$,
then
we define a directed edge $e^- :
v_i^{n(k,l)} \overset{e^-}{\underset{\beta}{\longrightarrow}} v_h^{n(k,l)-1}
$ 
labeled $\beta$,
that is 
$s(e^-) = v_i^{n(k,l)}, 
t(e^-) =v_h^{n(k,l)-1}
$ and 
$\lambda^-(e^-) = \beta$.

Similarly, for
$x=(x_n)_{n\in \Z} \in v_i^{n(k,l)}$ and $\alpha \in \Sigma$
such that  
$ (a_{k+1},a_{k+2},\dots,a_{l-2},\alpha) \in W_{k,l}(x)$ 
for some $( a_{k+1}, a_{k+2},\dots, a_{l-2}) \in B_{l-k-2}(\Lambda)$,
if 
\begin{equation*}
(\dots, x_{k-1}, x_k, a_{k+1}, a_{k+2},\dots, a_{l-2},\alpha, x_l, x_{l+1},\dots ) \in \Lambda
\end{equation*}
belongs to the equivalence class
$v_j^{n(k,l-1)} \in V_{n(k,l-1)}(=V_{n(k,l)-1}),$
we define a directed edge
$e^+: v_j^{n(k,l)-1}\overset{e^+}{\underset{\alpha}{\longrightarrow}}v_i^{n(k,l)}
$
labeled $\alpha$,
that is 
$s(e^+)=v_j^{n(k,l)-1},
t(e^+) = v_i^{n(k,l)}
$
and
$
\lambda^+(e^+) = \alpha$.

Let us denote by
$E_{l+1,l}^-$
the set of directed edges
from a vertex $v_j^{l+1}\in V_{l+1}$ to a vertex $v_i^{l}\in V_l$
for some $i=1,\dots,m(l), \, j=1,\dots, m(l+1)$.
Similarly
the set of directed edges 
from a vertex
$v_i^l\in V_l$ to a vertex $v_j^{l+1}\in V_{l+1}$
for some $i=1,\dots,m(l), \, j=1,\dots, m(l+1)$
is denoted by
$E_{l,l+1}^+$.
We then put
$$
E^- =\cup_{l=0}^{\infty} E_{l+1,l}^-, \qquad
E^+ =\cup_{l=0}^{\infty} E_{l,l+1}^+
$$
so that the pair $({\mathcal{L}}^-_\Lambda, {\mathcal{L}}^+_\Lambda)$ 
of the resulting labeled Bratteli diagrams 
${\mathcal{L}}^-_\Lambda =(V,E^-, \lambda^-)$ and
${\mathcal{L}}^+_\Lambda =(V,E^+, \lambda^+)$
become a $\lambda$-graph bisystem satisfying
FPCC (\cite{MaPre2019a}, \cite{MaPre2019b}).
The $\lambda$-graph bisystem is written 
${\mathcal{L}}_\Lambda=({\mathcal{L}}^-_\Lambda, {\mathcal{L}}^+_\Lambda)$
and called the canonical $\lambda$-graph bisystem for
$\Lambda$.
The presenting subshift $\Lambda_{{\mathcal{L}}_\Lambda}$ of ${\mathcal{L}}_\Lambda$
coincides with the original subshift $\Lambda$.

\section{Configuration space}

Let $\L =\LGBS$ be a $\lambda$-graph bisystem over $\Sigma$ satisfying FPCC
such that $\L^- =(V,E^-,\lambda^-)$ and
$\L^+ =(V,E^+,\lambda^+)$.
We will introduce a set $\XL=(V_\L, E_\L, \lambda_\L)$ of 
two-dimensional configurations
 expanded in  lower right infinitly in the two-dimensional plane. 
Recall that  
$\mbbZ2$ denotes 
the set of $(k,l) \in \mathbb{Z}^2$ such that $k<l$.
For $(k,l) \in \mbbZ2$, we put  $n(k,l) = l-k-1$.
The vertex set $V_{(k,l)}$ at $(k,l) \in \mathbb{Z}^2$
is defined by the vertex set 
$V_{n(k,l)}$ at level $n(k,l)$ of $\L$,
and we set
\begin{equation*}
V_\L = \bigcup_{(k,l)\in \mbbZ2} V_{(k,l)}.  
\end{equation*}
We define two kinds of edge sets 
$E^-_{(k-1,k),l}, E^+_{k,(l,l+1)} $ for $(k,l) \in \mbbZ2$ by using the edge sets of $\L$ 
as follows:
\begin{equation*}
E^-_{(k-1,k),l} =E^-_{n(k-1,l), n(k,l)},\qquad 
E^+_{k,(l,l+1)} =E^+_{n(k,l), n(k,l+1)}.  
\end{equation*}
We set 
\begin{equation*}
E_{{\mathcal{L}}^{-}} = \bigcup_{(k,l)\in \mbbZ2} E^-_{(k-1,k),l},
\qquad
E_{{\mathcal{L}}^{+}} = \bigcup_{(k,l)\in \mbbZ2} E^+_{k,(l,l+1)},  
\end{equation*}
and $E_\L = E_{\L^-} \cup E_{\L^+}$. 
A two-dimensional configuration 
\begin{equation*}
x =\{ (v_{(k,l)}, e^-_{(k-1,k),l}, e^+_{k,(l,l+1)}) \}_{(k,l) \in \mbbZ2}
\end{equation*}
consists of a vertex 
$v_{(k,l)}\in V_{(k,l)}$ and edges 
$e^-_{(k-1,k),l} \in E_{(k-1,k),l}^-, \, 
 e^+_{k,(l,l+1)} \in E_{k,(l,l+1)}^+
$ 
for each
$(k,l) \in \mbbZ2$
such that  
\begin{gather*}
 s(e^-_{(k-1,k),l}) = v_{(k-1,l)}, \qquad t(e^-_{(k-1,k),l}) = v_{(k,l)}, \\
 s(e^+_{k,(l,l+1)}) = v_{(k,l)}, \qquad t(e^+_{k,(l,l+1)}) = v_{(k,l+1)}.
 \end{gather*}
We further assume that for $(k,l) \in \mbbZ2$,
the label $\lambda^-(e^-_{(k-1,k),l})$ does not depend on $l \in \Z$
as well as  
the label $\lambda^+(e^+_{k,(l,l+1)}) $ does not depend on $k \in \Z$ 
such that there exists $(x_n)_{n\in \Z} \in \Sigma^\Z$ satisfying   
\begin{equation}  
 \lambda^-(e^-_{(k-1,k),l}) = x_k, \qquad
 \lambda^+(e^+_{k,(l,l+1)}) = x_l \quad \text{ for all } (k,l)  \in \mbbZ2,
\label{eq:akal}
\end{equation}
so that the sequence $(x_n)_{n\in \Z}$ yields an element of the presenting subshift
$\Lambda_\L$.
We define $\lambda_\L: E_\L (= E_{\L^-} \cup E_{\L^+}) \longrightarrow \Sigma$ 
by $\lambda_\L = \lambda^- \cup \lambda^+$. 
Since the labeling 
$\lambda^-: E^+\longrightarrow \Sigma$ is left-resolving 
and
$\lambda^-: E^+\longrightarrow \Sigma$ is right-resolving,
the configuration
$x$ is determined by only 
its vertices and labeled sequences on edges.
The configuration $x$ consists of concatenated squares figured as
\begin{equation*} 
\begin{CD}
v_{(k,l)} @>e^+_{k,(l,l+1)}>x_l> v_{(k,l+1)} \\
@A{x_k}A{e^-_{(k-1,k),l}}A @A{x_k}A{e^-_{(k-1,k),l+1}}A \\
v_{(k-1,l)} @>e^+_{k-1,(l,l+1)}>x_l> v_{(k-1,l+1)}. \\
\end{CD}
\end{equation*}
The configuration is figured such as in Figure \ref{fig:conf}.
\begin{figure}[h]
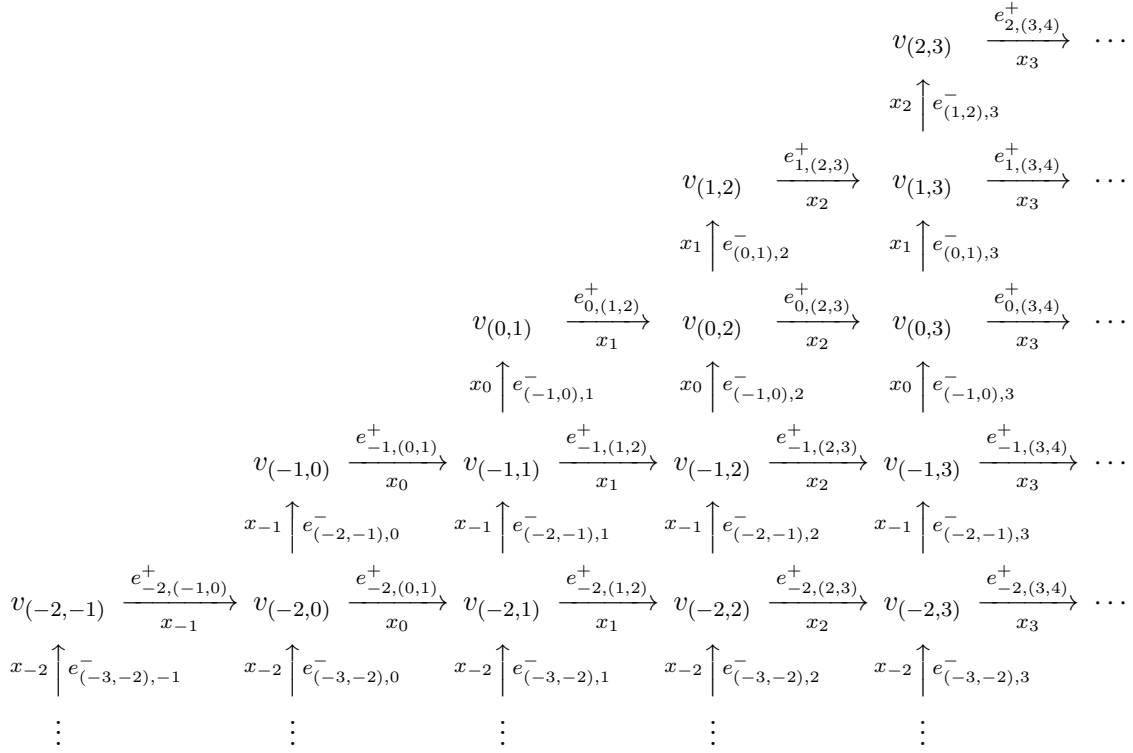

\begin{equation*}
\begin{CD}
@. @. @. @. @. v_{(2,3)} @>{e^+_{2,(3,4)}}>x_3>\cdots\\
@. @. @. @. @. @A{x_{2}}A{e^-_{(1,2),3}}A  @.\\
@. @. @. @. v_{(1,2)} @>{e^+_{1,(2,3)}}>x_2>v_{(1,3)} @>{e^+_{1,(3,4)}}>x_3>\cdots\\
@. @. @. @. @A{x_{1}}A{e^-_{(0,1),2}}A @A{x_{1}}A{e^-_{(0,1),3}}A  @.\\
@. @. @. v_{(0,1)} @>{e^+_{0,(1,2)}}>x_1>v_{(0,2)} @>{e^+_{0,(2,3)}}>x_2>v_{(0,3)} @>{e^+_{0,(3,4)}}>x_3>\cdots \\
@. @. @. @A{x_{0}}A{e^-_{(-1,0),1}}A @A{x_{0}}A{e^-_{(-1,0),2}}A @A{x_{0}}A{e^-_{(-1,0),3}}A  @.\\
@. @. v_{(-1,0)}@>{e^+_{-1,(0,1)}}>x_0>v_{(-1,1)}@>{e^+_{-1,(1,2)}}>x_1>v_{(-1,2)} 
@>{e^+_{-1,(2,3)}}>x_2>v_{(-1,3)} @>{e^+_{-1,(3,4)}}>x_3>\cdots\\
@. @. @A{x_{-1}}A{e^-_{(-2,-1),0}}A @A{x_{-1}}A{e^-_{(-2,-1),1}}A @A{x_{-1}}A{e^-_{(-2,-1),2}}A 
@A{x_{-1}}A{e^-_{(-2,-1),3}}A @.\\
@. v_{(-2,-1)}@>{e^+_{-2,(-1,0)}}>x_{-1}>v_{(-2,0)} @>{e^+_{-2,(0,1)}}>x_0>v_{(-2,1)} 
@>{e^+_{-2,(1,2)}}>x_1>v_{(-2,2)} @>{e^+_{-2,(2,3)}}>x_2>v_{(-2,3)} @>{e^+_{-2,(3,4)}}>x_3>
\cdots \\
@. @A{x_{-2}}A{e^-_{(-3,-2),-1}}A @A{x_{-2}}A{e^-_{(-3,-2),0}}A @A{x_{-2}}A{e^-_{(-3,-2),1}}A 
@A{x_{-2}}A{e^-_{(-3,-2),2}}A @A{x_{-2}}A{e^-_{(-3,-2),3}}A  @.\\
@. \vdots @. \vdots  @. \vdots @. \vdots @.\vdots @. \\
\end{CD}
\end{equation*}
\caption{A configuration}
\label{fig:conf}
\end{figure}
Let us denote by $\XL$ 
the set of two-dimensional configurations on $\mbbZ2$ for the $\lambda$-graph bisystem 
$\L =\LGBS$.
For a given configuration
$x =\{ (v_{(k,l)}, e^-_{(k-1,k),l}, e^+_{k,(l,l+1)} ) \}_{(k,l)\in \mbbZ2}
\in \XL$,
we write the vertices and edges 
$v_{(k,l)}, e^-_{(k-1,k),l}, e^+_{k,(l,l+1)}$ as
$x_{(k,l)}, x^-_{(k-1,k),l}, x^+_{k,(l,l+1)}$, respectively
if we need to specify the given  configuration $x$.
Hence we may write
\begin{equation*}
x =\{ (x_{(k,l)}, x^-_{(k-1,k),l}, x^+_{k,(l,l+1)}) \}_{(k,l)\in \mbbZ2}.
\end{equation*}
When we denote $\lambda^-(e^-_{(k-1,k),l})$ by $x_k$,
and hence  $\lambda^+(e^+_{k,(l,l+1)}) = x_l$, for $(k,l) \in \mbbZ2$,
then the presenting subshift
$\Lambda_\L$ by $\L$ may be written
\begin{equation*}
\Lambda_\L = \{ (x_n)_{n\in \Z} \in \Sigma^\Z \mid x \in \XL \}.
\end{equation*}

 For a configuration 
$x=\{ (v_{(k,l)}, e^-_{(k-1,k),l}, e^+_{k,(l,l+1)} ) \}_{(k,l)\in\mbbZ2} \in \XL$
and a fixed $(p,q) \in \mbbZ2$,
 denote by $\square_{(p,q)}(x)$ the configuration of the lower right infinite rectangle of $x$  for which
its upper left corner is the vertex $v_{(p,q)}$.
It is exactly defined by
\begin{equation}
\square_{(p,q)}(x) =\{ (v_{(k,l)}, e^-_{(k-1,k),l}, e^+_{k,(l,l+1)} ) 
\mid k\le p,  q\le l \}. \label{eq:xpq}
\end{equation}
It is also written 
\begin{equation*}
\square_{(p,q)}(x) =\{ (x_{(k,l)}, x^-_{(k-1,k),l}, x^+_{k,(l,l+1)} ) 
\mid k\le p,  q\le l \}. 
\end{equation*}
Since the labeling 
$\lambda^-: E^+\longrightarrow \Sigma$ is left-resolving 
and
$\lambda^-: E^+\longrightarrow \Sigma$ is right-resolving,
the configuration
$\square_{(p,q)}(x)$ is determined by only 
its vertices and labeled sequences on edges.
By putting
$
x_k =\lambda^-(e^-_{(k-1,k), l}), \, 
x_l =\lambda^+(e^+_{k, (l,l+1)})
$
for
$ 
k \le p, \,\,q\le l,
$
we may write $\square_{(p,q)}(x)$ defined in \eqref{eq:xpq} as
\begin{equation}
\square_{(p,q)}(x) =\{ (v_{(k,l)}, x_k, x_l) \in V\times \Sigma\times \Sigma 
\mid k\le p, q \le l \}. \label{eq:labelxpq}
\end{equation}
We call the configuration $\square_{(p,q)}(x)$ 
the $(p,q)$-{\it rectangle of}\/ $x$.
The $(p,q)$-rectangle $\square_{(p,q)}(x)$ is figured as in Figure \ref{fig:pqrec}.
A $(p,q)$-rectangle expands infinitely in both right and lower directions
from the upper left corner vertex $v_{(p,q)}$.

\begin{figure}[h]
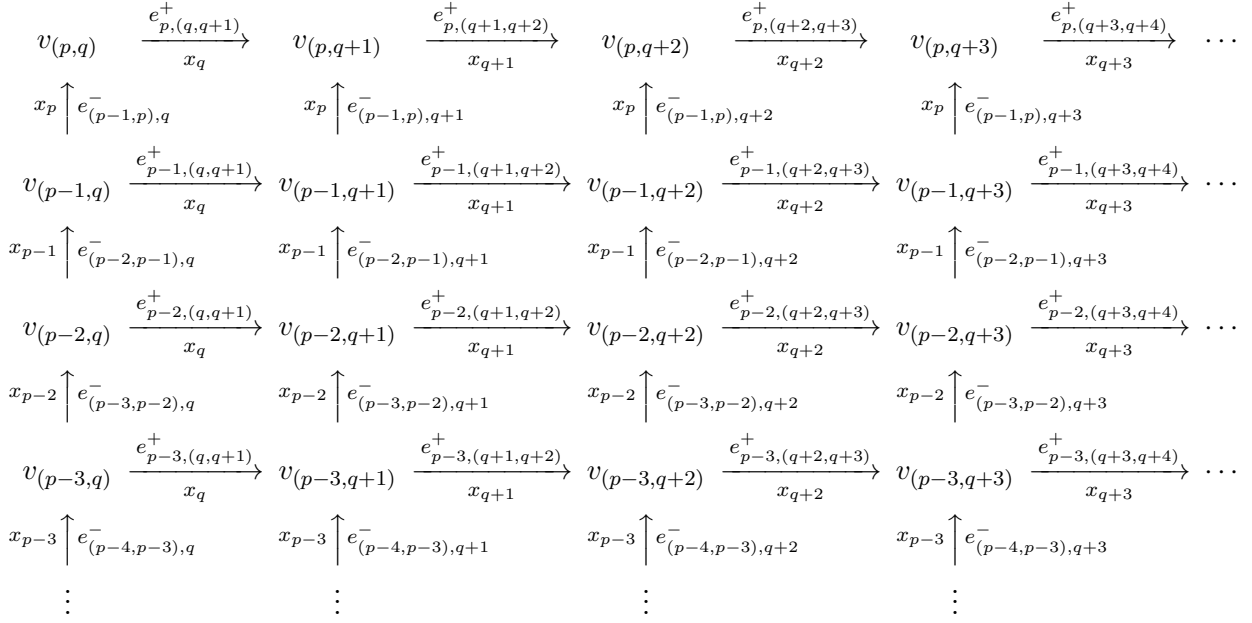

\begin{equation*}
\begin{CD}
v_{(p,q)}@>{e^+_{p,(q,q+1)}}>{x_q}>v_{(p,q+1)} @>{e^+_{p,(q+1,q+2)}}>{x_{q+1}}>v_{(p,q+2)} 
@>{e^+_{p,(q+2,q+3)}}>{x_{q+2}}>v_{(p,q+3)} @>{e^+_{p,(q+3,q+4)}}>{x_{q+3}}>\cdots
\\
@A{x_{p}}A{e^-_{(p-1,p),q}}A @A{x_{p}}A{e^-_{(p-1,p),q+1}}A 
@A{x_{p}}A{e^-_{(p-1,p),q+2}}A @A{x_{p}}A{e^-_{(p-1,p),q+3}}A 
@.\\
v_{(p-1,q)}@>{e^+_{p-1,(q,q+1)}}>{x_q}>v_{(p-1,q+1)}@>{e^+_{p-1,(q+1,q+2)}}>{x_{q+1}}>v_{(p-1,q+2)} 
@>{e^+_{p-1,(q+2,q+3)}}>{x_{q+2}}>v_{(p-1,q+3)} @>{e^+_{p-1,(q+3,q+4)}}>{x_{q+3}}>\cdots
\\
@A{x_{p-1}}A{e^-_{(p-2,p-1),q}}A @A{x_{p-1}}A{e^-_{(p-2,p-1),q+1}}A @A{x_{p-1}}A{e^-_{(p-2,p-1),q+2}}A 
@A{x_{p-1}}A{e^-_{(p-2,p-1),q+3}}A 
@.\\
v_{(p-2,q)} @>{e^+_{p-2,(q,q+1)}}>{x_q}>v_{(p-2,q+1)} 
@>{e^+_{p-2,(q+1,q+2)}}>{x_{q+1}}>v_{(p-2,q+2)} 
@>{e^+_{p-2,(q+2,q+3)}}>{x_{q+2}}>v_{(p-2,q+3)} @>{e^+_{p-2,(q+3,q+4)}}>{x_{q+3}}>\cdots
\\
@A{x_{p-2}}A{e^-_{(p-3,p-2),q}}A @A{x_{p-2}}A{e^-_{(p-3,p-2),q+1}}A 
@A{x_{p-2}}A{e^-_{(p-3,p-2),q+2}}A @A{x_{p-2}}A{e^-_{(p-3,p-2),q+3}}A 
\\
v_{(p-3,q)} @>{e^+_{p-3,(q,q+1)}}>{x_q}>v_{(p-3,q+1)} 
@>{e^+_{p-3,(q+1,q+2)}}>{x_{q+1}}>v_{(p-3,q+2)} 
@>{e^+_{p-3,(q+2,q+3)}}>{x_{q+2}}>v_{(p-3,q+3)} @>{e^+_{p-3,(q+3,q+4)}}>{x_{q+3}}>\cdots
\\
@A{x_{p-3}}A{e^-_{(p-4,p-3),q}}A @A{x_{p-3}}A{e^-_{(p-4,p-3),q+1}}A 
@A{x_{p-3}}A{e^-_{(p-4,p-3),q+2}}A @A{x_{p-3}}A{e^-_{(p-4,p-3),q+3}}A 
@.\\
\vdots @. \vdots @. \vdots @. \vdots @.
\\
\end{CD}
\end{equation*}
\caption{$(p,q)$-rectangle $\square_{(p,q)}(x)$}
\label{fig:pqrec}
\end{figure}


For a vertex $v_{(p,q)} \in V_{(p,q)}(=V_{n(p,q)})$ and a word 
$\mu =(\mu_{p+1}, \mu_{p+2}, \dots, \mu_{q-1}) \in P(v_{(p,q)})$ 
of length $n(p,q)(=q-p-1)$ in $\Lambda_\L$,
recall that $P(v_{(p,q)})$ denote the predecessor set of the vertex 
$v_{(p,q)}$ in  ${\mathcal{L}}^+$.
The word $\mu =(\mu_{p+1}, \mu_{p+2}, \dots, \mu_{q-1}) \in P(v_{(p,q)})$ 
is figured as
\begin{equation*}
\begin{CD}
v_{(p,p+1)}@>{\mu_{p+1}}>>v_{(p,p+2)} @>{\mu_{p+2}}>>\cdots 
@>{\mu_{q-2}}>>v_{(p,q-1)} @>{\mu_{q-1}}>>v_{(p,q)}.
\end{CD}
\end{equation*}
By the left-resolving property of $\L^+$,
there exist  unique finite sequences of vertices and edges in ${\mathcal{L}}^+$
\begin{gather*}
v_{(p,p+1)}\in V_{(p,p+1)}(=V_0), \, 
v_{(p,p+2)}\in V_{(p,p+2)}(=V_1), \, \dots, 
v_{(p,q-1)}\in V_{(p,q-1)}(=V_{n(p,q)-1}),
\\  
e^+_{p,(p+1, p+2)} \in E^+_{p,(p+1, p+2)}(=E^+_{0,1}), \,  
e^+_{p,(p+2,p+3)} \in E^+_{p,(p+2,p+3)}(=E^+_{1,2}), \, \\ \dots, 
e^+_{p,(q-1,q)} \in E^+_{p,(q-1,q)}(=E^+_{n(p,q)-1, n(p,q)})
\end{gather*}
such that 
\begin{gather*}
v_{(p,p+1)} = s(e^+_{p,(p+1, p+2)}), \, 
v_{(p,p+2)} =t(e^+_{p,(p+1, p+2)}) = s(e^+_{p,(p+2, p+3)}), \,
\dots,\\
 v_{(p,q-1)} = s(e^+_{p,(q-1, q)}), \,
 v_{(p,q)} = t(e^+_{p,(q-1, q)}), \\
\mu_{p+1} = \lambda^+(e^+_{p,(p+1, p+2)}), \,  
\mu_{p+2}= \lambda^+(e^+_{p,(p+2,p+3)}), \dots,
\mu_{q-1} =\lambda^+(e^+_{p,(q-1,q)}).
\end{gather*}
Since the $\lambda$-graph bisystem $\LGBS$ satisfies FPCC,
we have 
$P(v_{(p,q)}) = F(v_{(p,q)})$
so that, by the right-resolving property of $\L^-$, 
there exist  unique finite sequences of vertices and edges in ${\mathcal{L}}^-$
\begin{gather*}
v_{(q-1,q)}\in V_{(q-1,q)}(=V_0), \, 
v_{(q-2,q)}\in V_{(q-2,q)}(=V_1), \, \dots, 
v_{(p-1,q)}\in V_{(p-1,q)}(=V_{n(p,q)-1}),
\\  
e^-_{(q-2,q-1),q} \in E^-_{(q-2,q-1),q}(=E^-_{1,0}), \,  
e^-_{(q-3,q-2),q} \in E^-_{(q-3,q-2),q}(=E^-_{2,1}), \, \\ \dots,
e^-_{(p,p-1),q} \in E^-_{(p,p-1),q}(=E^-_{n(p,q), n(p,q)-1})
\end{gather*}
such that 
\begin{gather*}
v_{(q-1,q)} = t(e^-_{(q-2, q-1),q}), \, 
v_{(q-2,q)} =s(e^-_{(q-2, q-1),q}) = t(e^-_{(q-3,q-2),q}), \,
\dots,\\
 v_{(p-1,q)} = t(e^-_{(p,p-1),q}), \,
 v_{(p,q)} = s(e^-_{(p,p-1),q}), \\
\mu_{p+1} = \lambda^+(e^-_{(p,p-1),q}), \,  
\mu_{p+2}= \lambda^+(e^-_{(p-1,p-2),q}), \dots,
\mu_{q-1} =\lambda^+(e^-_{(q-2,q-1),q}).
\end{gather*}
The situation is figured such as in Figure \ref{fig:vpqmu}.
\begin{figure}[h]
\begin{equation*}
\begin{CD}
  @. @. @. v_{(q-1,q)} \\
  @. @. @. @A{\mu_{q-1}}A{e^-_{(q-2,q-1),q}}A @.\\
  @. @. @. \vdots \\
  @. @. @. @A{\mu_{p+2}}A{e^-_{(p+1,p+2),q}}A @.\\
  @. @. @. v_{(p+1,q)} \\
  @. @. @. @A{\mu_{p+1}}A{e^-_{(p,p+1),q}}A @.\\
 v_{(p,p+1)} @>{e^+_{p,(p+1,p+2)}}>{\mu_{p+1}}> v_{(p,p+2)} @>{e^+_{p,(p+2,p+3)}}>{\mu_{p+2}}>
\cdots 
@>{e^+_{p,(q-1,q)}}>{\mu_{q-1}}>v_{(p,q)}\\
\end{CD}
\end{equation*}
\label{fig:vpqmu}
\caption{$P(v_{(p,q)})$ and $F(v_{(p,q)})$}
\end{figure} 
By the property \eqref{eq:akal}, 
we may file the configuration
 $\{ (v_{(k,l)}, e^-_{(k-1,k),l}, e^+_{k,(l,l+1)}) \mid p \le k < l \le q \}$
from a given vertex
$v_{(p,q)} \in V_{(p,q)}$ and a word
$\mu = (\mu_{p+1}, \dots,\mu_{q-1}) \in P(v_{(p,q)})$. 
The resulting configuration is denoted by
$\triangle_{(\mu;v_{(p,q)})}$,
that is figured in Figure \ref{fig:pqtri}.
 For a fixed $(p,q) \in \mbbZ2$,
let us denote by $\triangle_{(p,q)}(x) $ the configuration of
$x$ restricting to $\{(k,l) \in \mbbZ2 \mid p\le k < l \le q\}$.
It is written
\begin{equation}
\triangle_{(p,q)}(x) =\{ (x_{(k,l)}, x^-_{(k,k+1),l}, x^+_{k,(l,l+1)}) 
\mid p\le k < l \le q \}, \label{eq:tpq}
\end{equation}
and called the $(p,q)$-{\it triangle for}\/ $x$.
\begin{figure}[h]
{\small
\begin{equation*}
\begin{CD}
 @. @. @. @. v_{(q-1,q)} \\
 @. @. @. @. @A{\mu_{q-1}}A{e^-_{(q-2,q-1),q}}A @.\\
@. @. @. v_{(q-2,q-1)} @>{e^+_{q-2,(q-1,q)}}>{\mu_{q-1}}> v_{(q-2,q)} \\ 
@. @. @. @A{\mu_{q-2}}A{e^-_{(q-3,q-2),q-1}}A  @A{\mu_{q-2}}A{e^-_{(q-3,q-2),q}}A @.\\
@. @. v_{(p+2,p+3)} @>{e^+_{p+2,(p+3,p+4)}}>{\mu_{p+3}}>\cdots @. \vdots \\
 @. @. @A{\mu_{p+2}}A{e^-_{(p+1,p+2),p+3}}A @. @A{\mu_{p+2}}A{e^-_{(p+1,p+2),q}}A @.\\
 @. v_{(p+1,p+2)} @>{e^+_{p+1,(p+2,p+3)}}>{\mu_{p+2}}>
v_{(p+1,p+3)} @>{e^+_{p+1,(p+3,p+4)}}>{\mu_{p+3}}>
\cdots
 @>{e^+_{p+1,(q-1,q)}}>{\mu_{q-1}}> v_{(p+1,q)} \\
 @. @A{\mu_{p+1}}A{e^-_{(p,p+1),p+2}}A @A{\mu_{p+1}}A{e^-_{(p,p+1),p+3}}A @.  
@A{\mu_{p+1}}A{e^-_{(p,p+1),q}}A @.\\
 v_{(p,p+1)} @>{e^+_{p,(p+1,p+2)}}>{\mu_{p+1}}>v_{(p,p+2)} @>{e^+_{p,(p+2,p+3)}}>{\mu_{p+2}}>
 v_{(p,p+3)} @>{e^+_{p,(p+3,p+4)}}>{\mu_{p+3}}>
\cdots  @>{e^+_{p,(q-1,q)}}>{\mu_{q-1}}>v_{(p,q)}\\
\end{CD}
\end{equation*}
}
\caption{$(p,q)$-triangle $\triangle_{(p,q)}$}
\label{fig:pqtri}
\end{figure}
 \begin{lemma}\label{lem:pqtriangle}
A $(p,q)$-triangle is exactly determined by 
the vertex $v_{(p,q)}$ and the word $\mu \in P(v_{(p,q)}).$
Hence there exists a bijective correspondence between
the set of $(p,q)$-triangles and the set of words
$P(v_{(p,q)})$ with vertices $v_{(p,q)}$ in $V_{(p,q)}$.
\end{lemma}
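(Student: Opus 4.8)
The plan is to show that the assignment sending a $(p,q)$-triangle to the pair $(v_{(p,q)}, \mu)$ is a bijection onto $\{(v_{(p,q)}, \mu) \mid v_{(p,q)} \in V_{(p,q)},\ \mu \in P(v_{(p,q)})\}$, where $v_{(p,q)}$ is the corner vertex and $\mu = (\mu_{p+1}, \dots, \mu_{q-1})$ is the word read off the $\L^+$-labels $x_{p+1}, \dots, x_{q-1}$ along the bottom edge $v_{(p,p+1)} \to \cdots \to v_{(p,q)}$. That $\mu \in P(v_{(p,q)})$ is immediate, since this bottom edge is by construction a path in $\L^+$ terminating at $v_{(p,q)}$.

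For injectivity I would reconstruct the triangle from $(v_{(p,q)}, \mu)$ using only the resolving conditions. As in the discussion preceding the statement, the left-resolving property of $\L^+$ recovers the bottom edge uniquely, since from the terminal $v_{(p,q)}$ and the label $\mu_{q-1}$ the edge $e^+_{p,(q-1,q)}$ and its source $v_{(p,q-1)}$ are forced, and so on down to $v_{(p,p+1)}$. I would then fill the triangle upward one row at a time: given row $k$, each vertex $v_{(k+1,l)}$ is the terminal of the unique $\L^-$-edge $e^-_{(k,k+1),l}$ with source $v_{(k,l)}$ and label $x_{k+1} = \mu_{k+1}$ (right-resolving of $\L^-$), and the horizontal edges of the new row are then pinned down from their terminals and labels by left-resolving. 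As every vertex and edge is thereby forced, two triangles sharing a corner and a word coincide.

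For surjectivity I would induct on $n(p,q) = q-p-1$. Given $\mu \in P(v_{(p,q)})$, recover the bottom edge as above. By FPCC we also have $\mu \in F(v_{(p,q)})$, so the first follower edge $e^-_{(p,p+1),q}$ of label $\mu_{p+1}$ and terminal $v_{(p+1,q)}$ exists, and the truncated word $(\mu_{p+2}, \dots, \mu_{q-1})$ lies in $F(v_{(p+1,q)}) = P(v_{(p+1,q)})$; the induction hypothesis then supplies the $(p+1,q)$-triangle sitting above row $p$. It remains to glue on row $p$ by producing the vertical edges $e^-_{(p,p+1),l}$. Running $l$ down from $q-1$, at the unit square on $v_{(p,l)}, v_{(p,l+1)}, v_{(p+1,l)}, v_{(p+1,l+1)}$ the two opposite corners governing the local property, $u = v_{(p+1,l)}$ and $v = v_{(p,l+1)}$, are already available; the known top edge $e^+_{p+1,(l,l+1)}$ together with the right edge $e^-_{(p,p+1),l+1}$ carried over from the previous step forms an element of $E^+_-(u,v)$, and pushing it through the label-preserving bijection $\phi$ produces the sought left edge $e^-_{(p,p+1),l}$ together with a bottom $\L^+$-edge that matches the recovered bottom row by left-resolving.

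I expect the gluing step in the surjectivity argument to be the main obstacle. The work is in checking that $\phi$ is correctly seeded at each square, namely that the pair of already-known edges genuinely belongs to $E^+_-(u,v)$ for the relevant $u, v$ and carries the correct labels, and that its output is simultaneously compatible with the bottom row obtained by left-resolving and with the vertical edge inherited from the neighboring square, so that the assembled data is a bona fide configuration fragment in $\XL$. Here FPCC is exactly what lets the predecessor word $\mu$ serve also as a follower word, so that the bottom $\L^+$-path and the right $\L^-$-path meet consistently at the corner, while the local property is what forces every interior square to close.
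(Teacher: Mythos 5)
Your proof is correct and uses exactly the ingredients on which the paper itself relies: the paper states this lemma without a formal proof, the argument being implicit in the discussion immediately preceding it (left-resolving of $\L^+$ recovers the bottom row from $v_{(p,q)}$ and $\mu$, FPCC together with right-resolving of $\L^-$ recovers the right column, and the local property is invoked to ``file'' the interior). Your row-by-row induction with the explicit square-closing via the label-preserving bijection $\phi$, checked against the bottom row by left-resolving, is a detailed write-up of that same reconstruction, so it matches the paper's approach.
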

As in the above lemma, the vertex  
$v_{(p,q)}$ and the word $\mu = (\mu_{p+1},\dots,\mu_{q-1})\in P(v_{(p,q)})$ 
determine the original $(p,q)$-triangle
for a given configuration $x \in \XL$.
\medskip

We define the cylinder set 
$U_{(\mu;v_{(p,q)})}$
in $\XL$ for 
$v_{(p,q)} \in V_{(p,q)}$  and
$\mu =(\mu_{p+1}, \dots, \mu_{q-1}) \in P(v_{(p,q)})$
by setting
\begin{equation*}
U_{(\mu;v_{(p,q)})} =
\{ x \in \XL \mid \triangle_{(p,q)}(x) = \triangle_{(\mu;v_{(p,q)})} \}.
\end{equation*}
For a configuration $x =\{ (v_{(k,l)}, e^-_{(k-1,k),l}, e^+_{k,(l,l+1)} ) \}_{(k,l)\in\mbbZ2}$
and a vertex $v_{(p,q)}(=x_{(p,q)})$ at $(p,q) \in \mbbZ2$ in $x$,
define admissible words 
$F_x(v_{(p,q)}), P_x(v_{(p,q)})$ 
of $\Lambda_\L$ by 
\begin{align*}
F_x(v_{(p,q)}) & = (\lambda^-(e^-_{(p,p+1),q}),\lambda^-(e^-_{(p+1,p+2),q}),\dots,
\lambda^-(e^-_{(q-2,q-1),q})) \in B_{n(p,q)}(\Lambda_\L), \\
P_x(v_{(p,q)})& = (\lambda^+(e^+_{p,(p+1,p+2)}),\dots,
\lambda^+(e^+_{p,(q-2,q-1)}), \lambda^+(e^+_{p,(q-1,q)}) ) \in B_{n(p,q)}(\Lambda_\L).
 \end{align*}
By the condition \eqref{eq:akal}, we know that
\begin{equation*}
F_x(v_{(p,l)}) = P_x(v_{(p,q)}) \quad \text{ for all } (p,q) \in \mbbZ2.
\end{equation*}
As the triangle
$\triangle_{(\mu;v_{(p,q)})}$ is determined by only 
the vertex 
$v_{(p,q)} \in V_{(p,q)}$ and
the word
$\mu  \in P(v_{(p,q)})$,
we know that 
$x \in \XL$ belongs to
$U_{(\mu;v_{(p,q)})}$ if and only if
$x_{(p,q)} = v_{(p,q)},  P_x(x_{(p,q)})=\mu.
$
\begin{lemma}
The set of the form
$\{ U_{(\mu;v_{(p,q)})} \mid
v_{(p,q)} \in V_{n(p,q)}, \mu \in P(v_{(p,q)}) \}$
becomes an open neighborhood basis in $\XL$.
 \end{lemma}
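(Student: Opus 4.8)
The plan is to realize $\XL$ as a subspace of a product of finite discrete spaces and then recognize the sets $U_{(\mu;v_{(p,q)})}$ as precisely the cylinders cut out by finite triangular windows. Concretely, I would regard a configuration $x =\{ (v_{(k,l)}, e^-_{(k-1,k),l}, e^+_{k,(l,l+1)}) \}_{(k,l)\in\mbbZ2}$ as a point of the product space $\prod_{(k,l)\in\mbbZ2}\bigl(V_{(k,l)}\times E^-_{(k-1,k),l}\times E^+_{k,(l,l+1)}\bigr)$, each factor being finite and discrete, with the Tychonoff product topology. Since the constraints on $s,t$ and the label condition \eqref{eq:akal} each involve only finitely many coordinates, $\XL$ is a closed, hence compact and totally disconnected, subspace. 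A basic open set of this topology is obtained by fixing the vertex-and-edge data at a finite position-set $S\subset\mbbZ2$, and a neighborhood basis at $x$ arises by letting $S$ run through the finite windows $\{(k,l)\in\mbbZ2 : -N\le k,\ l\le N\}$ as $N\to\infty$.

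First I would check that each $U_{(\mu;v_{(p,q)})}$ is clopen, in particular open. By Lemma \ref{lem:pqtriangle} the datum $(v_{(p,q)},\mu)$ determines the whole $(p,q)$-triangle $\triangle_{(\mu;v_{(p,q)})}$, i.e.\ it prescribes the vertices and edges at the finitely many positions $(k,l)$ with $p\le k<l\le q$ appearing in \eqref{eq:tpq}. Combining this with the defining equivalence $x\in U_{(\mu;v_{(p,q)})}\Leftrightarrow \triangle_{(p,q)}(x)=\triangle_{(\mu;v_{(p,q)})}$, the set $U_{(\mu;v_{(p,q)})}$ is exactly the cylinder in which these finitely many coordinates take prescribed values. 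Such a cylinder is clopen in the product space, hence clopen in $\XL$.

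Finally, to prove the neighborhood basis property, I would fix $x\in\XL$ and an arbitrary open neighborhood $W$ of $x$, then choose a basic neighborhood of $x$ inside $W$ that fixes the coordinates at a finite position-set $S$. Taking $p\le \min\{k : (k,l)\in S\}-1$ and $q\ge \max\{l : (k,l)\in S\}$ ensures that every position of $S$, together with its incident $e^-$- and $e^+$-edges, lies inside the triangular region $\{p\le k<l\le q\}$. Setting $v_{(p,q)}:=x_{(p,q)}$ and $\mu:=P_x(x_{(p,q)})$ yields $x\in U_{(\mu;v_{(p,q)})}$, while any $y\in U_{(\mu;v_{(p,q)})}$ satisfies $\triangle_{(p,q)}(y)=\triangle_{(p,q)}(x)$ and hence agrees with $x$ at every coordinate in $S$, so that $y\in W$; thus $U_{(\mu;v_{(p,q)})}\subseteq W$. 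The main obstacle is exactly this index bookkeeping: a product-basic neighborhood constrains a vertex together with its incident edge $e^-_{(k-1,k),l}$, which reaches out to column $k-1$, so one must shift $p$ down by one step to be sure the entire local datum is captured by a single triangle. Once this alignment is arranged — and using that by the right-resolving property of $\L^-$ and the left-resolving property of $\L^+$ the edges are already determined by vertices and labels — the containment is immediate, completing the verification that the $U_{(\mu;v_{(p,q)})}$ form an open neighborhood basis.
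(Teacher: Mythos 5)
Your proposal is correct, but it takes a genuinely different route from the paper. The paper never introduces an ambient space: it treats the lemma as the verification of the basis axiom needed to \emph{define} the topology on $\XL$, and its proof is a direct triangle-nesting argument — given $x \in U_{(\mu;v_{(p,q)})}\cap U_{(\mu';v'_{(p',q')})}$, it sets $p_0=\min\{p,p'\}$, $q_0=\max\{q,q'\}$, $\nu = P_x(x_{(p_0,q_0)})$, and observes that both given triangles are sub-triangles of $\triangle_{(\nu;v_{(p_0,q_0)})}$, so that $x\in U_{(\nu;v_{(p_0,q_0)})}\subset U_{(\mu;v_{(p,q)})}\cap U_{(\mu';v'_{(p',q')})}$. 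You instead realize $\XL$ as a closed subspace of the Tychonoff product $\prod_{(k,l)\in\mbbZ2}\bigl(V_{(k,l)}\times E^-_{(k-1,k),l}\times E^+_{k,(l,l+1)}\bigr)$ and identify the sets $U_{(\mu;v_{(p,q)})}$ (via Lemma \ref{lem:pqtriangle}) as clopen cylinders that are cofinal among the basic product neighborhoods of each point. This proves more: since the $U$-sets form a basis of an honest topology, the basis intersection axiom that the paper checks follows automatically, and your realization yields at once that $\XL$ is compact, Hausdorff, totally disconnected and metrizable — facts the paper establishes separately afterwards (sequential compactness, the ultrametric, etc.). The trade-off is a mild circularity in reading the statement: in the paper the topology on $\XL$ does not yet exist when the lemma is stated, so your argument should be phrased as exhibiting a topology (the relative product topology) for which the family is a basis, which then coincides with the topology the paper subsequently endows.

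One small bookkeeping caveat: you shift $p$ down by one so that the edges $e^-_{(k-1,k),l}$ attached to positions in $S$ fall inside the triangle, but the symmetric issue occurs on the other side, since the coordinate at $(k,l)$ also carries $e^+_{k,(l,l+1)}$, which reaches row $l+1$. Under the literal reading of \eqref{eq:tpq} this edge is part of $\triangle_{(p,q)}(x)$ whenever $l\le q$, so your choice $q\ge\max\{l:(k,l)\in S\}$ suffices; under the stricter reading in which the triangle contains only edges between its own vertices (as in Figure \ref{fig:pqtri}), you would need $q\ge\max\{l:(k,l)\in S\}+1$. Either way the fix is one index shift and does not affect the structure of your argument.
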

\begin{proof}
Let $\mu\in P(v_{(p,q)})$ and  $\mu'\in P(v'_{(p',q')})$,
so that $p<q$ and $p'<q'$.
Suppose that $U_{(\mu;v_{(p,q)})}\cap U_{(\mu';v'_{(p',q')})} \ne \emptyset$.
Take a configuration 
$x \in U_{(\mu;v_{(p,q)})}\cap U_{(\mu';v'_{(p',q')})}$
so that
$x_{(p,q)}= v_{(p,q)}, x_{(p',q')} = v'_{(p',q')}$
and
$P_x(v_{(p,q)}) = \mu,
P_x(v'_{(p',q')}) = \mu'$.  
Let
$p_0 := \min\{p,p'\}$ and 
$q_0 := \max\{q,q'\}$,
so that $p_0< q_0$.
Put $\nu = P_x(x_{(p_0,q_0)})$.
Since 
both triangles $\triangle_{(\mu;v_{(p,q)})}$
and $\triangle_{(\mu';v'_{(p',q')})}$
are parts of the triangle
$\triangle_{(\nu;v_{(p_0,q_0)})}$,
we have
$x \in U_{(\nu;v_{(p_0,q_0)})}\subset U_{(\mu;v_{(p,q)})}\cap U_{(\mu';v'_{(p',q')})}$.
\end{proof}
We endow the topology 
with  $\XL$
for which open neighborhood basis are of the form
 $\{U_{(\mu;v_{(p,q)})} \mid
v_{(p,q)} \in V_{(p,q)}, \mu \in P(v_{(p,q)}) \}$.
 
\begin{proposition}
The set $\XL$ becomes a compact Hausdorff space by the topology defined above.
\end{proposition}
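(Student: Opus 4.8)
The plan is to realize $\XL$ as an inverse limit of finite discrete spaces and then invoke the standard fact that such a limit is compact and Hausdorff. For each $(p,q) \in \mbbZ2$ let $T_{(p,q)}$ denote the set of all $(p,q)$-triangles. By Lemma \ref{lem:pqtriangle} this set is in bijection with $\{(v_{(p,q)},\mu) \mid v_{(p,q)} \in V_{(p,q)},\ \mu \in P(v_{(p,q)})\}$, and since each $V_{(p,q)} = V_{n(p,q)}$ and each predecessor set $P(v_{(p,q)})$ is finite, $T_{(p,q)}$ is a finite set, which I equip with the discrete topology. I order $\mbbZ2$ by declaring $(p,q) \preceq (p',q')$ when $p' \le p$ and $q \le q'$; this order is directed, since $(\min\{p,p'\},\max\{q,q'\})$ dominates any two elements, exactly as in the preceding lemma's proof. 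Whenever $(p,q) \preceq (p',q')$, restricting a larger triangle to the region $\{p \le k < l \le q\}$ defines a map $T_{(p',q')} \to T_{(p,q)}$, and these maps are compatible, so $(T_{(p,q)})$ forms an inverse system of finite discrete spaces.

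Next I would consider $\iota \colon \XL \to \prod_{(p,q)} T_{(p,q)}$, $x \mapsto (\triangle_{(p,q)}(x))_{(p,q)}$. It lands in $\varprojlim T_{(p,q)}$ because the restriction of $\triangle_{(p',q')}(x)$ to $\{p \le k < l \le q\}$ is exactly $\triangle_{(p,q)}(x)$. The map is injective: by the resolving hypotheses on $\L^{\pm}$ a configuration is determined by its vertices and edge labels, and as $p \to -\infty$, $q \to +\infty$ the triangles $\triangle_{(p,q)}(x)$ exhaust all of $\mbbZ2$, so the full family of triangles recovers $x$. Moreover $U_{(\mu;v_{(p,q)})}$ is precisely the preimage under $\iota$ of the projection-singleton $\{t \in \varprojlim T_{(p,q)} \mid t_{(p,q)} = \triangle_{(\mu;v_{(p,q)})}\}$, and by directedness these singleton-preimages form a basis for the inverse-limit topology. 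Hence $\iota$ is a homeomorphism onto its image, and the topology defined on $\XL$ coincides with the inverse-limit topology.

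The crux is to check that $\iota$ is \emph{onto} $\varprojlim T_{(p,q)}$, i.e.\ that every compatible family of triangles glues to an honest configuration. Given such a family, for each $(k,l)$ I read off the vertex and incident edges from any triangle $T_{(p,q)}$ containing $(k,l)$; compatibility makes this independent of the chosen $(p,q)$, the square relations hold because they hold inside each triangle, and the label-consistency \eqref{eq:akal} is inherited from the individual triangles, so the glued data is a configuration in $\XL$. I expect this gluing step to be the main obstacle, since it is where one must see that the local compatibility packaged in the triangles genuinely produces a globally consistent two-dimensional configuration with a well-defined label sequence in $\Lambda_\L$; here the local property and FPCC of the $\lambda$-graph bisystem are what guarantee the triangles fit together. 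Once surjectivity is established, $\XL \cong \varprojlim T_{(p,q)}$ is a directed inverse limit of finite, hence compact Hausdorff, discrete spaces, so it is compact and Hausdorff (indeed totally disconnected and metrizable). As a direct check on Hausdorffness one may also note that distinct $x \ne y$ must differ on some triangle $\triangle_{(p,q)}$, whence $U_{(P_x(x_{(p,q)});\,x_{(p,q)})}$ and $U_{(P_y(y_{(p,q)});\,y_{(p,q)})}$ are disjoint neighborhoods separating $x$ and $y$.
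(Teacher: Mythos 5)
Your proof is correct, and it reaches the conclusion by a genuinely different route than the paper. The paper argues Hausdorffness exactly as in your closing remark (two distinct configurations differ on some triangle, and the corresponding cylinder sets are disjoint), but for compactness it invokes the metric of Lemma \ref{lem:metric} --- a forward reference --- to reduce to sequential compactness, and then runs a diagonal extraction: for a sequence $x(n)$ it chooses nested infinite index sets $I_m$ on which $\triangle_{(-m,m)}(x(n))$ is constant, using that there are only finitely many $(-m,m)$-triangles, and then asserts that the resulting coherent family of triangles is realized by some $x \in \XL$. Your argument replaces the metric and the diagonal procedure with Tychonoff, identifying $\XL$ with $\varprojlim T_{(p,q)}$, a closed subspace of a product of finite discrete spaces over the directed set $(\mbbZ2,\prec)$. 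The two proofs share the same combinatorial kernel: finiteness of the triangle sets plus the fact that a compatible family of triangles glues to a configuration. The paper uses the gluing fact implicitly (the unproved sentence ``We may find a configuration $x$ belonging to $\XL$ such that $\triangle_{(-m,m)}(x) = \triangle_{(-m,m)}(x(n))$ for all $n \in I_m$''), while you isolate it as the crux and justify it (well-definedness via directedness, the square and label conditions checked inside a common large triangle, admissibility of the label sequence since all its central words lie in predecessor sets), which if anything makes your write-up more complete on that point. What your approach buys: no appeal to metrizability or to a lemma stated later, and total disconnectedness and metrizability of $\XL$ come along for free (countable directed index set, finite fibers). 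What the paper's approach buys: a more elementary, self-contained point-set argument that constructs the limit of a sequence explicitly and dovetails with the ultrametric introduced immediately afterwards. One minor quibble: the gluing step itself needs only directedness and compatibility; FPCC and the local property enter earlier, in Lemma \ref{lem:pqtriangle} and in the fact that restrictions of triangles are again triangles, so attributing the gluing to them is slightly misplaced, though harmless.
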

\begin{proof}
We will first show that $\XL$ is Hausdorff.
Let $x, x' \in \XL$ be  two configurations such that $x\ne x'$.
There exists $(p,q) \in \mbbZ2$ such that 
$P_x(x_{(p,q)})\ne P_{x'}(x'_{(p,q)})$.
By putting
$v_{(p,q)} = x_{(p,q)},  \mu = P_x(x_{(p,q)})$
and
$v'_{(p,q)} = x'_{(p,q)},  \mu' = P_{x'}(x'_{(p,q)})$,
we have
$\triangle_{(\mu;v_{(p,q)})} \ne \triangle_{(\mu';v'_{(p,q)})}$
and hence
$U_{(\mu;v_{(p,q)})} \cap U_{(\mu';v'_{(p,q)})} =\emptyset$.

We will next show that 
$\XL$ is compact.
We will see in Lemma \ref{lem:metric}
that $\XL$ is a metric space. 
Hence
it suffices to show that $\XL$ is sequentially compact.
Let $x(n), n\in \N$ be a sequence of configurations in  $\XL$.
Let $\triangle_{(p,q)}(x(n))$ 
be the $(p,q)$-triangle of the configuration $x(n)$. 
For a fixed $(p,q)$, there are only finitely many  
$(p,q)$-triangles, because the vertex set 
$V_{(p,q)}$ and the edges sets 
$
\cup_{p\le k<l \le q} ( E_{(k,k+1),l}^- \cup E_{k,(l,l+1)}^+ )
(= \cup_{l=0}^{n(p,q)-1}(E_{l,l+1}^-\cup E^+_{l,l+1}))
 $ 
are finite.
Hence there is an infinite set $I_1$ of natural numbers for which  
 $\triangle_{(-1,1)}(x(n))$ 
is the same for all $n \in I_1$.
Next, we may find an infinite subset $I_2 \subset I_1$ such that  
$\triangle_{(-2,2)}(x(n))$ is the same for all $n \in I_2$.
Continuing this procedure, we may find for each $m\in \N$ 
an infinite set $I_m \subset I_{m-1}$ such that 
$\triangle_{(-m,m)}(x(n))$ is the same for all $n \in I_m$.
We may find a configuration  $x$ belonging to $\XL$
such that $\triangle_{(-m,m)}(x) =\triangle_{(-m,m)}(x(n))$ for all $n\in I_m$.
By the choice of axiom, one may take $i_n \in I_n$ for all $n \in \N$.
We then know that the subsequence
$x(i_n), n \in \N$ of  $x(n), n\in \N$ converges to $x$ 
in the topology.
This shows that  
$\XL$ is compact.
\end{proof}

\medskip

We will define a metric on the configuration space  
$\XL$ that defines the same topology previously endowed.
For two configurations
$x,z \in \XL$,
if $\triangle_{(-1,1)}(x) \ne \triangle_{(-1,1)}(z)$,
then we define 
$d(x,z) =1$.
If there exists $p \in \N$ such that 
\begin{equation*}
\triangle_{(-p,p)}(x) = \triangle_{(-p,p)}(z), \qquad
\triangle_{(-p-1,p+1)}(x) \ne \triangle_{(-p-1,p+1)}(z),
\end{equation*}
we define
$
d(x,z) =\frac{1}{2^p}.
$
Then we note that 
\begin{equation*}
d(x,z) \le \frac{1}{2^p} \quad \text{ if and only if } 
\quad \triangle_{(-p,p)}(x) = \triangle_{(-p,p)}(z).
\end{equation*}
It is straightforward to see that the function
$d(\,\, , \,\, ): \XL\times\XL\longrightarrow \mathbb{R}$
yields a metric on $\XL$ that gives rise to the same topology on $\XL$
defined previously.  
 We may see more about the metric in the following. 
\begin{lemma}\label{lem:metric}
The metric 
$d(\,\, , \,\, )$ is an ultrametric on $\XL$, that means the inequality
\begin{equation*}
d(x,y) \le \max\{d(x,z), d(z,y) \}
\text{ for all } x,y,z \in \XL
\end{equation*}
holds.
\end{lemma}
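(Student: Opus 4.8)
The plan is to deduce the ultrametric inequality directly from the characterization
\[
d(x,z) \le \frac{1}{2^p}
\quad\Longleftrightarrow\quad
\triangle_{(-p,p)}(x) = \triangle_{(-p,p)}(z),
\]
recorded just before the statement, so that the entire argument rests on the transitivity of equality of $(-p,p)$-triangles rather than on any direct manipulation of the metric. The point is that $d$ is a tree-type metric built from a nested family of finite equivalence relations, and for such metrics the strong triangle inequality is automatic.

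First I would fix $x,y,z \in \XL$ and set $r = \max\{d(x,z), d(z,y)\}$, with the goal of proving $d(x,y) \le r$. The key step is the following monotone transitivity observation: for every $p \in \N$, if $d(x,z) \le \tfrac{1}{2^p}$ and $d(z,y) \le \tfrac{1}{2^p}$, then the characterization gives $\triangle_{(-p,p)}(x) = \triangle_{(-p,p)}(z)$ and $\triangle_{(-p,p)}(z) = \triangle_{(-p,p)}(y)$; since these are equalities of identical combinatorial configurations, they compose to $\triangle_{(-p,p)}(x) = \triangle_{(-p,p)}(y)$, and applying the characterization once more yields $d(x,y) \le \tfrac{1}{2^p}$.

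It then remains only to organize the cases according to the value of $r$, noting that $d$ takes values in $\{0\} \cup \{ \tfrac{1}{2^p} \mid p \in \Zp \}$. If $r = 1$, the inequality is immediate, since $d(x,y) \le 1$ always holds. If $r = \tfrac{1}{2^p}$ with $p \ge 1$, then both $d(x,z)$ and $d(z,y)$ are at most $\tfrac{1}{2^p}$, so the transitivity observation gives $d(x,y) \le \tfrac{1}{2^p} = r$. Finally, if $r = 0$, then $d(x,z) \le \tfrac{1}{2^p}$ and $d(z,y) \le \tfrac{1}{2^p}$ hold for every $p$, whence the observation forces $d(x,y) \le \tfrac{1}{2^p}$ for all $p$ and therefore $d(x,y) = 0 = r$.

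I do not expect any genuine obstacle here, as the content of the lemma is carried entirely by the characterization equation, which itself reflects that $\triangle_{(-p,p)}(x)$ is determined by the finite data of $x$ inside the window $\{(k,l) \mid -p \le k < l \le p\}$. The only point requiring slight care is the boundary case: the characterization is stated for $p \in \N$, so the value $d = 1$ (morally the case ``$p = 0$'', where $(0,0) \notin \mbbZ2$) is not covered by it and must be dispatched separately via the trivial bound $d(x,y) \le 1$, exactly as above.
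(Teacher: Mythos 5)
Your proof is correct and follows essentially the same route as the paper's: both arguments rest on the characterization $d(x,z)\le \tfrac{1}{2^p}$ if and only if $\triangle_{(-p,p)}(x)=\triangle_{(-p,p)}(z)$, combined with transitivity of equality of $(-p,p)$-triangles. The paper runs this observation in contrapositive form (from $d(x,y)=\tfrac{1}{2^p}$ it deduces that one of $d(x,z)$, $d(z,y)$ must be at least $\tfrac{1}{2^p}$), whereas you argue forward and dispatch the boundary values $d=1$ and $d=0$ explicitly, which the paper glosses over; this is a difference of presentation, not of substance.
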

\begin{proof}
Suppose that 
$d(x, y) = \frac{1}{2^p}.$
Hence we have 
\begin{equation*}
\triangle_{(-p,p)}(x) = \triangle_{(-p,p)}(y), \qquad 
\triangle_{(-p-1,p+1)}(x) \ne \triangle_{(-p-1,p+1)}(y).
\end{equation*}
If both the equalities
\begin{equation*}
\triangle_{(-p-1,p+1)}(x) = \triangle_{(-p-1,p+1)}(z), \qquad 
\triangle_{(-p-1,p+1)}(z) = \triangle_{(-p-1,p+1)}(y)
\end{equation*}
hold, then
$\triangle_{(-p-1,p+1)}(z) \ne \triangle_{(-p-1,p+1)}(y)$,
so that  
$
\triangle_{(-p-1,p+1)}(x) \ne \triangle_{(-p-1,p+1)}(z).
$
Hence we have
$d(x, z) \ge \frac{1}{2^p}$
proving
\begin{equation*}
d(x,y)=\frac{1}{2^p} \le \max\{d(x,z), d(z,y) \}.
\end{equation*}
\end{proof}
Since a compact metric space defined by an ultrametric is totally disconnected
(cf. \cite[Theorem 2.10]{PutnamAMS}),
we have 
\begin{proposition}
The set $\XL$ is a compact
totally disconnected metric space.
\end{proposition}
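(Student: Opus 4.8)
The plan is to combine the three ingredients already at hand: the compactness of $\XL$ established in the preceding proposition, the fact from Lemma~\ref{lem:metric} that $d$ is an ultrametric, and the general principle that a metric space carrying an ultrametric is totally disconnected. The first two are done, so only the last point requires argument, and it is purely topological.

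First I would record that, for each $p \in \N$, the $d$-ball $\{z \in \XL \mid d(x,z) \le \tfrac{1}{2^p}\}$ coincides with $\{z \in \XL \mid \triangle_{(-p,p)}(x) = \triangle_{(-p,p)}(z)\}$, by the remark made just after the definition of $d$. Since there are only finitely many $(-p,p)$-triangles and each set $U_{(\mu;v_{(p,q)})}$ is by construction an element of the open neighbourhood basis, this ball is a finite union of basic open sets, hence open; being a $d$-ball of this special form, it is simultaneously closed, because in an ultrametric space a point in the closure of such a ball is forced by the ultrametric inequality to lie in the ball itself. Thus $\XL$ possesses a neighbourhood basis consisting of clopen sets.

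Next I would deduce total disconnectedness directly. Given two distinct configurations $x, z \in \XL$, choose $p$ with $d(x,z) = \tfrac{1}{2^p}$; then the clopen ball $B = \{w \in \XL \mid \triangle_{(-p-1,p+1)}(w) = \triangle_{(-p-1,p+1)}(x)\}$ of radius $\tfrac{1}{2^{p+1}}$ contains $x$ but excludes $z$. Any connected subset of $\XL$ containing both $x$ and $z$ would then be split by the partition $\XL = B \sqcup (\XL \setminus B)$ into two nonempty relatively open pieces, contradicting connectedness. Hence the connected component of each point is a singleton, i.e.\ $\XL$ is totally disconnected. Alternatively, one may simply invoke the cited general fact that a compact metric space whose metric is an ultrametric is totally disconnected (cf.\ \cite[Theorem 2.10]{PutnamAMS}).

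I do not anticipate a genuine obstacle here; the statement is essentially a corollary of Lemma~\ref{lem:metric} together with a standard feature of ultrametrics. The only point demanding mild care is the identification of the $d$-balls with finite unions of the cylinder sets $U_{(\mu;v_{(p,q)})}$, which links the metric description of the topology to the clopen basis; once this is in place, the clopenness of balls, and therefore total disconnectedness, follows immediately.
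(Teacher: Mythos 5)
Your proposal is correct and follows essentially the same route as the paper: the paper likewise obtains the result by combining the compactness already proved, Lemma~\ref{lem:metric} showing $d$ is an ultrametric, and the general fact that a compact metric space with an ultrametric is totally disconnected (cited there as \cite[Theorem 2.10]{PutnamAMS}). The only difference is that you also spell out the standard clopen-ball argument behind that general fact (identifying the ball $\{z \mid d(x,z)\le \tfrac{1}{2^p}\}$ with the cylinder set $U_{(\mu;v_{(-p,p)})}$ via Lemma~\ref{lem:pqtriangle}), which the paper simply delegates to the citation.
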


There is a possiblity that the  compact totally disconnected metric space
$\XL$ has isolated points. 
We need an assumption on $\LGBS$ under which
$\XL$ is a Cantor set.
We will discuss a condition for the $\lambda$-graph bisystem
$\LGBS$ that 
the space $\XL$ is a Cantor set in the following section.


\section{Realization of the configuration space}
In what follows, 
we fix a  $\lambda$-graph bisystem $\L=\LGBS$ satisfying FPCC. 
The following lemma is crucial in our further discussion
\begin{lemma}\label{lem:muvx}
Take a vertex $v_{(p,q)} \in V_{(p,q)}$
with $(p,q) \in \mbbZ2$.
For a $(p,q)$-rectangle $\square_{(p,q)}$ and a finite word
$\mu \in P(v_{(p,q)})$,
there exists a two-dimensional configuration $x \in \XL$
such that 
 $\square_{(p,q)}(x) =\square_{(p,q)}$ and $ P_x(v_{(p,q)}) =\mu $. 
The two-dimensional configuration $x$ is unique for 
$\square_{(p,q)}$ and $\mu \in P(v_{(p,q)})$. 
\end{lemma}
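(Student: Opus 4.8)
The plan is to recover the unique $x$ in stages: first fix its label sequence $(x_n)_{n\in\Z}$, then determine every vertex $v_{(k,l)}$ with $(k,l)\in\mbbZ2$, and finally read off every edge. For the label sequence, the given rectangle $\square_{(p,q)}$ supplies $x_k=\lambda^-(e^-_{(k-1,k),l})$ for all $k\le p$ and $x_l=\lambda^+(e^+_{k,(l,l+1)})$ for all $l\ge q$, while the word $\mu=(\mu_{p+1},\dots,\mu_{q-1})\in P(v_{(p,q)})$ supplies the middle letters $x_{p+1},\dots,x_{q-1}$ along the bottom row of the triangle $\triangle_{(\mu;v_{(p,q)})}$, which is well defined by Lemma \ref{lem:pqtriangle}. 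These prescriptions overlap consistently, so $(x_n)_{n\in\Z}$ is determined; its membership in $\Lambda_\L$ will be read off afterwards from the fact that every central block of $(x_n)_{n\in\Z}$ occurs as a predecessor word $P_x(v_{(k,l)})\in P(v_{(k,l)})$ of the configuration to be built.

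The crux is to fill the two regions of $\mbbZ2$ left uncovered by the rectangle $\{k\le p,\, l\ge q\}$ and the triangle $\{p\le k<l\le q\}$, namely the northeast region $\{k>p,\, l>q\}$ and the southwest region $\{k<p,\, l<q\}$; here the local property of $\LGBS$ together with the resolving hypotheses is decisive. Consider a unit square with corners $v_{(k-1,l)},v_{(k,l)},v_{(k-1,l+1)},v_{(k,l+1)}$ and put $u=v_{(k,l)}$, $v=v_{(k-1,l+1)}$. Using that $\L^-$ is right-resolving and $\L^+$ is left-resolving, one checks that $E^+_-(u,v)$ contains at most one pair with $\lambda^+$-label $x_l$ and $\lambda^-$-label $x_k$, and likewise for $E^-_+(u,v)$; since the bijection $\phi$ preserves labels it matches these two distinguished pairs. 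Hence, given the two diagonal corners $u,v$ and the labels $x_k,x_l$, the remaining two corners are each uniquely determined, and bijectivity of $\phi$ guarantees that the completion on the $E^+_-$ side exists precisely when the completion on the $E^-_+$ side does.

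Using this square-completion I would fill the two regions by induction outward from $v_{(p,q)}$. In the northeast region I proceed in order of increasing $k+l$, obtaining the top-right corner $v_{(k,l)}$ of each square (together with its incoming top and right edges) from the already-known bottom-left, top-left and bottom-right corners via $\phi^{-1}$; in the southwest region I proceed in order of decreasing $k+l$, obtaining the bottom-left corner from the other three via $\phi$. The diagonal vertices $v_{(k,k+1)}$, all equal to the unique element of the singleton $V_0$, together with the prescribed rectangle and triangle, furnish the base of both inductions, and at each step the two edges produced by $\phi$ agree with those of the neighbouring squares because, by the resolving properties, an edge is determined by its endpoints and its label. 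Once all vertices are fixed, every edge $e^-_{(k-1,k),l}$ (resp.\ $e^+_{k,(l,l+1)}$) is the unique one with the prescribed endpoints and label $x_k$ (resp.\ $x_l$), and since $\phi$ preserves labels the resulting $x$ automatically satisfies \eqref{eq:akal}; thus $x\in\XL$, $\square_{(p,q)}(x)=\square_{(p,q)}$ and $P_x(v_{(p,q)})=\mu$. Uniqueness is then immediate: any $x'$ with the same two properties agrees with $x$ on the rectangle and, by Lemma \ref{lem:pqtriangle}, on the triangle, whence the forced completions above give $x'=x$.

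I expect the main obstacle to be the existence half of the middle step: verifying that the square-completions carried out independently across the two infinite quadrants are globally consistent --- that the edge shared by two adjacent squares receives the same value from both --- and that the object so produced is a bona fide element of $\XL$ whose label sequence genuinely lies in $\Lambda_\L$. This is exactly the point at which the bijectivity of $\phi$ and the FPCC hypothesis $F(v_{(p,q)})=P(v_{(p,q)})$ --- which reconciles the follower data along the right edge of the triangle with the predecessor word $\mu$ along its bottom row at the corner $v_{(p,q)}$ --- are indispensable.
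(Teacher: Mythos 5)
Your overall mechanism is the same as the paper's: propagate the given rectangle across the two missing quadrants using the local property of $\LGBS$, with the resolving hypotheses supplying uniqueness, and fill the triangle from $\mu$ via Lemma \ref{lem:pqtriangle}. Your square-completion lemma is correct as stated (right-resolving of $\L^-$ and left-resolving of $\L^+$ give at most one pair in $E^+_-(u,v)$, resp.\ $E^-_+(u,v)$, with prescribed labels, and the label-preserving bijection $\phi$ matches them). But there is a genuine gap at exactly the step you defer to your last paragraph: the induction cannot cross the diagonal. In the northeast quadrant, the square whose top-right corner is a diagonal vertex $v_{(k,k+1)}$, $k\ge q$, is degenerate (its top-left corner would sit at the forbidden index $(k,k)$), so $\phi^{-1}$ is not applicable there; and the next square, with top-right corner $(k,k+2)$, requires as input the $\L^-$ edge $e^-_{(k-1,k),k+1}$ from $v_{(k-1,k+1)}$ into the diagonal vertex, which no earlier square produces and which lies in neither the rectangle nor the triangle. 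Declaring the diagonal vertices part of the ``base'' because $V_0$ is a singleton fixes the vertex but not this edge, and without the edge the induction halts.

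The missing argument is a repeated application of FPCC, once at each diagonal vertex of both quadrants --- not, as your last paragraph suggests, a single reconciliation at the corner $v_{(p,q)}$ (that is a separate, also necessary, use, needed to build the right column of the triangle from $\mu\in P(v_{(p,q)})=F(v_{(p,q)})$). Concretely: when the induction reaches $(k,k+1)$ with $k\ge q$, the previously constructed $\L^+$ edge into $v_{(k-1,k+1)}$ carries the label $x_k$, so $x_k\in P(v_{(k-1,k+1)})=F(v_{(k-1,k+1)})$ by FPCC; right-resolving of $\L^-$ then yields a unique $\L^-$ edge with source $v_{(k-1,k+1)}$ and label $x_k$, whose terminal automatically lies in the singleton $V_0=\{v_{(k,k+1)}\}$, and this is the required edge. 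The symmetric argument (exchanging the roles of $F$ and $P$ and using left-resolving of $\L^+$) produces the $\L^+$ edge out of each diagonal vertex in the southwest quadrant. This is precisely how the paper's proof advances: it moves the corner of the rectangle along the diagonal, $\square_{(p,q)}\to\square_{(p,p+1)}\to\square_{(p-1,p)}\to\cdots$ and $\square_{(q-1,q)}\to\square_{(q,q+1)}\to\cdots$, invoking the resolving property (with FPCC implicitly supplying existence) for the new diagonal edge at each step and the local property for the propagation you describe. By contrast, the other obstacle you flag --- that completions of adjacent squares might disagree --- is not actually a problem: in your ordering each edge is produced by exactly one square and only consumed by squares treated later, so consistency is automatic.
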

\begin{proof}
Let $\square_{(p,q)}$ be a $(p,q)$-rectangle such that 
\begin{equation}
\square_{(p,q)} =\{ (v_{(k,l)}, e^-_{(k-1,k),l}, e^+_{k,(l,l+1)} ) 
\mid k\le p,\, q \le l \}. \label{eq:recpq}
\end{equation}
Put
\begin{equation*}
x_k  = \lambda^-(e^-_{(k-1,k),l}) \quad \text{ for } k\le p, \quad \text{ and } \quad
x_l  = \lambda^+(e^+_{k,(l,l+1)}) \quad \text{ for } q\le l.
\end{equation*}
We write the word $\mu\in P(v_{(p,q)})$ as
$
\mu = (x_{p+1}, x_{p+2},\dots, x_{q-1}),
$
so that we have a biinfinite sequence $(x_n)_{n\in\Z} \in \Sigma^\Z$
that belongs to $\Lambda_\L$.
By the left-resolving property of ${\mathcal{L}}^+$,
there exists a unique finite path of directed edges 
\begin{equation*}
e^+_{p,(l,l+1)}\in E^+_{p,(l,l+1)}, \qquad l=p+1,\dots,q-1
\end{equation*}
in $\L^+$ such that  
\begin{gather*}
t(e^+_{p,(l,l+1)})  = s(e^+_{p,(l+1,l+2)}) \quad \text{ for } l=p+1,\dots,q-2,\qquad
t(e^+_{p,(q-1,q)})  = v_{(p,q)},\\
\lambda^+(e^+_{p,(l,l+1)})  = x_l\quad \text{ for } l=p+1,\dots, q-1. 
\end{gather*}
We put
\begin{equation*}
 v_{(p,l)} = s(e^+_{p,(l,l+1)}) \in V_{(p,l)} \quad \text{ for } l=p+1,\dots,q-1 
\end{equation*}
so that we have a labeled path:
\begin{equation*}
\begin{CD}
v_{(p,p+1)}@>{e^+_{p,(p+1,p+2)}}>{x_{p+1}}>v_{(p,p+2)} 
@>{e^+_{p,(p+2,p+3)}}>{x_{p+2}}>\cdots 
@>{e^+_{p,(q-2,q-1)}}>{x_{q-2}}>v_{(p,q-1)} 
@>{e^+_{p,(q-1,q)}}>{x_{q-1}}>v_{(p,q)}.
\end{CD}
\end{equation*}
By the local property of $\lambda$-graph bisystem $\LGBS$,
the given directed edge 
$e^-_{(p-1,p),q}$ in $\square_{(p,q)}$
together with  
$e^+_{p,(q-1,q)}$  satisfying
$$
t(e^-_{(p-1,p),q}) = v_{(p,q)} =t(e^+_{p,(q-1,q)}), \qquad
s(e^-_{(p-1,p),q}) = v_{(p-1,q)}
$$
determine directed edges 
$
e^-_{(p-1,p),q-1} \in E^-_{(p-1,p),q-1}
$ 
and 
$
e^+_{p-1,(q-1,q)}\in E^+_{p-1, (q-1,q)}
$
such that 
\begin{gather*}
t(e^-_{(p-1,p),q-1}) =v_{(p,q-1)},  \qquad
s(e^-_{(p-1,p),q-1}) =s(e^+_{p-1,(q-1,q)}), \qquad
t(e^+_{p-1,(q-1,q)}) = v_{(p-1,q)},\\
\lambda^-(e^-_{(p-1,p),q-1}) =x_p,\qquad
\lambda^+(e^+_{p-1,(q-1,q)}) = x_{q-1}, \qquad
t(e^+_{p-1,(q-1,q)}) = v_{(p-1,q)}
\end{gather*}
that is figured such as 
\begin{equation*} 
\begin{CD}
v_{(p,q-1)} @>e^+_{p,(q-1,q)}>{x_{q-1}}> v_{(p,q)} \\
@A{x_p}A{e^-_{(p-1,p),q-1}}A @A{x_p}A{e^-_{(p-1,p),q}}A \\
v_{(p-1,q-1)} @>e^+_{p-1,(q-1,q)}>{x_{q-1}}> v_{(p-1,q)}. \\
\end{CD}
\end{equation*}
Like this way, by using the local property of $\L$,
we may extend the $(p,q)$-rectangle $\square_{(p,q)}$
to a $(p,p+1)$-rectangle $\square_{(p,p+1)}$ in a unique way.
We write it as
\begin{equation}
\square_{(p,p+1)} =\{ (v_{(k,l)}, e^-_{(k-1,k),l}, e^+_{k,(l,l+1)} )
\mid k\le p,\, p+1 \le l \}. \label{eq:recpp1}
\end{equation}

Similarly by the right-resolving property of ${\mathcal{L}}^-$,
there exists a unique finite path of edges in ${\mathcal{L}}^-$
\begin{equation*}
e^-_{(k-1,k),q}\in E^-_{(k-1,k),q}, \qquad k=p+1,\dots,q-1 
\end{equation*}
such that 
\begin{gather*}
s(e^-_{(p,p+1),q}) = v_{(p,q)},\qquad
t(e^-_{(k-1,k),q}) = s(e^-_{(k,k+1),q}) \quad \text{ for } k=p+1,\dots,q-2,\\
\lambda^-(e^-_{(k-1,k),q}) =  x_k \quad \text{ for } k=p+1,\dots,q-1.
\end{gather*}
We set
\begin{equation*}
v_{(k,q)} = t(e^-_{(k-1,k),q}) \in V_{(k,q)}, \qquad k=p+1,\dots,q-1.
\end{equation*}
The situation is figured such as 
\begin{equation*}
\begin{CD}
 v_{(q-1,q)} \\
 @A{x_{q-1}}A{e^-_{(q-2,q-1),q}}A @.\\
 \vdots \\
 @A{x_{p+2}}A{e^-_{(p+1,p+2),q}}A @.\\
 v_{(p+1,q)} \\
 @A{x_{p+1}}A{e^-_{(p,p+1),q}}A @.\\
v_{(p,q)}\\
\end{CD}
\end{equation*}
By the local property of $\L$, 
we may extend the $(p,q)$-rectangle $\square_{(p,q)}$ to 
a ${(q-1,q)}$-rectangle $\square_{(q-1,q)}$
 in a similar way to the extension from
$\square_{(p,q)}$ to $\square_{(p,p+1)}$.
We write the $\square_{(q-1,q)}$-rectangle as 
\begin{equation}
\square_{(q-1,q)} =\{ (v_{(k,l)}, e^-_{(k-1,k),l}, e^+_{k,(l,l+1)}) 
\mid k\le q-1,\, q \le l \}. \label{eq:recq1q}
\end{equation}
Therefore we have extensions
$
\square_{(p,p+1)}\cup
\square_{(q-1,q)}$
of
$\square_{(p,q)}$.

We will next extend $\square_{(p,p+1)}$ to its lower left.
By the left-resolving property of ${\mathcal{L}}^+$,
there exists a unique edge written $e^+_{p-1,(p,p+1)} \in E^+_{p-1,(p,p+1)}$
such that 
\begin{equation*}
t(e^+_{p-1,(p,p+1)}) = v_{(p-1,p+1)}, \qquad \lambda^+(e^+_{p-1,(p,p+1)}) = x_p.
\end{equation*} 
We put
$
v_{(p-1,p)} =s(e^+_{p-1,(p,p+1)}) \in V_{(p-1,p)}.
$ 
By the local property of $\L$ for the vertices
$v_{(p-1,p)}$ and $v_{(p-2,p+1)}$,
we may find unique directed edges and a vertex 
$$
e^-_{(p-2,p-1),p} \in E^-_{(p-2,p-1),p}
\qquad
e^+_{p-2,(p,p+1)}\in E^+_{p-2,(p, p+1)}
\quad
\text{ and }
\quad
v_{(p-2, p)}\in V_{(p-2,p)}
 $$
such that 
\begin{gather*}
t(e^-_{(p-2,p-1),p}) =v_{(p-1,p)},  \qquad
s(e^-_{(p-2,p-1),p}) =s(e^+_{p-2,(p,p+1)}) =v_{(p-2, p)}, \\
t(e^+_{p-2,(p,p+1)}) = v_{(p-2,p+1)},\qquad
\lambda^-(e^-_{(p-2,p-1),p}) = x_{p-1},\qquad
\lambda^+(e^+_{p-2,(p,p+1)}) = x_p,
\end{gather*}
that is figured such as 
\begin{equation*} 
\begin{CD}
v_{(p-1,p)} @>e^+_{p-1,(p,p+1)}>x_p> v_{(p-1,p+1)} \\
@A{x_{p-1}}A{e^-_{(p-2,p-1),p}}A @A{x_{p-1}}A{e^-_{(p-2,p-1),p+1}}A \\
v_{(p-2,p)} @>e^+_{p-2,(p,p+1)}>x_p> v_{(p-2,p+1)}. \\
\end{CD}
\end{equation*}
Like this way, we may extend $\square_{(p,p+1)}$ to its one-step left
so that we have the extension $\square_{(p-1,p)}$.
By continuing these procedure, 
we have successive extensions
 $\square_{(p-j,p-j+1)}$
from $\square_{(p-j +1,p-j+2)}$
for $j=2,3,\dots $.
Similarly, we have an extension $\square_{(q,q+1)}$ 
from $\square_{(q-1,q)}$
and a sequence of extensions
$\square_{(q+j,q+j+1)}$
from $\square_{(q+j-1,q+j)}$
for $j=2,3,\dots $.
Therefore we have an extension 
of $\square_{(p,q)}$
to a configuration except the inside the 
$(p,q)$-triangle.
Now the word 
$\mu =(x_{p+1}, x_{p+2},\dots, x_{q-1}) \in P(v_{(p,q)})$
is given,
so that Lemma \ref{lem:pqtriangle} tells us that 
it uniquely extends to $(p,q)$-triangle $\triangle_{(\mu;v_{(p,q)})}$.
By the right-resolving property of ${\mathcal{L}}^-$, 
the directed edges in ${\mathcal{L}}^-$ whose source is $v_{(p,q)}$ and its label sequence 
is $\mu$ is unique,
so that the $(p,q)$-tritangle $\triangle_{(\mu;v_{(p,q)})}$ 
files the inside the above extension of 
$\square_{(p,q)}$.
Therefore we get a whole configuration from $\square_{(p,q)}$ and 
$\mu  \in P(v_{(p,q)})$.
The configuration is clearly unique by the left-resolving property of ${\mathcal{L}}^+$ and
the right-resolving property of ${\mathcal{L}}^-$.
\end{proof}

For $(p,q) \in \mbbZ2$,
we fix a vertex $v_{(p,q)}\in V_{(p,q)}$.
A {\it zigzag path}\/ $\gamma$ starting from $v_{(p,q)}$ is a concatenating infinite path
$$
\gamma=(e^+_{p-k,(q+k,q+k+1)},e^-_{(p-k-1,p-k),q+k+1})_{k=0}^\infty
$$
such that
\begin{gather*}
s(e^+_{p,(q,q+1)}) = v_{(p,q)}, \qquad
t(e^+_{p-k,(q+k,q+k+1)}) = t(e^-_{(p-k-1,p-k),q+k+1}),\\
s(e^-_{(p-k-1,p-k),q+k+1}) =s(e^+_{p-k,(q+k,q+k+1)}), \qquad
k=0,1,\dots
\end{gather*}
Since the labeling 
$\lambda^-: E^+\longrightarrow \Sigma$ is left-resolving and
and
$\lambda^-: E^+\longrightarrow \Sigma$ is right-resolving,
the configuration
 is determined by only 
its vertices and labeled sequences on edges.
Let
\begin{equation*}
x_k =\lambda^-(e^-_{(k-1,k), l}), \qquad 
x_l =\lambda^+(e^+_{k, (l,l+1)}) \quad \text{ for } k \le p< q \le l.
\end{equation*}
A zigzag path starting from $v_{(p,q)}$ is figured such as 
in Figure \ref{fig:zigzag1}.
\begin{figure}[h]
\begin{equation*}
\begin{CD}
v_{(p,q)}@>{e^+_{p,(q,q+1)}}>x_q>v_{(p,q+1)} @. 
@. @.
\\
@. @A{x_p}A{e^-_{(p-1,p),q+1}}A @. @. 
@.\\
@. v_{(p-1,q+1)}@>{e^+_{p-1,(q+1,q+2)}}>{x_{q+1}}>v_{(p-1,q+2)} 
@. @.
\\
@. @. @A{x_{p-1}}A{e^-_{(p-2,p-1),q+2}}A 
@. 
@.\\
@. 
@. v_{(p-2,q+2)} @>{e^+_{p-2,(q+2,q+3)}}>{x_{q+2}}>v_{(p-2,q+3)} @.
\\
@. @. 
@. @A{x_{p-2}}A{e^-_{(p-3,p-2),q+3}}A 
\\
@. @. @. \cdots  @. 
\\
\\\end{CD}
\end{equation*}
\caption{A zigzag path}
\label{fig:zigzag1}
\end{figure}
We denote by 
$\CZ(v_{(p,q)})$
the set of zigzag paths starting from a vertex $v_{(p,q)}$
in $V_{(p,q)}$,
and set
$$
\CZ_{(p,q)} = \bigcup_{v_{(p,q)} \in V_{(p,q)}} \CZ(v_{(p,q)}).
$$
\begin{lemma}
For a vertex  $v_{(p,q)} \in V_{(p,q)}$,
a zigzag path starting from 
 $v_{(p,q)}$ determines a $(p,q)$-rectangle as its extension in a unique way.
Hence there exists a bijective correspondence between zigzag paths  
starting from  $v_{(p,q)}$
and $(p,q)$-rectangles $\square_{(p,q)}$
whose upper left corner is $v_{(p,q)}$.
\end{lemma}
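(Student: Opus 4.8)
The plan is to exhibit two mutually inverse maps between the set $\CZ(v_{(p,q)})$ of zigzag paths starting at $v_{(p,q)}$ and the set of $(p,q)$-rectangles with upper left corner $v_{(p,q)}$. In one direction the construction is immediate: a $(p,q)$-rectangle $\square_{(p,q)}$ contains, among its vertices and edges, precisely the staircase vertices $v_{(p-k,q+k)}, v_{(p-k,q+k+1)}$ and edges $e^+_{p-k,(q+k,q+k+1)}, e^-_{(p-k-1,p-k),q+k+1}$ for $k=0,1,2,\dots$, since all of these sit at coordinates $(K,L)$ with $K\le p$ and $L\ge q$; reading them off yields a zigzag path, because the incidence relations required of a zigzag path are exactly those already satisfied inside the configuration. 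The substance of the lemma is the reverse map, which reconstructs the full rectangle from the staircase, and I would then check that the two assignments are inverse to each other.

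For the reconstruction, the crucial tool is the local property of $\LGBS$: for a unit square whose top left corner $u$ and bottom right corner $v$ lie in $V_l$ and $V_{l+2}$, the bijection $\phi\colon E^+_-(u,v)\longrightarrow E^-_+(u,v)$ identifies the pair (top edge, right edge) with the pair (left edge, bottom edge) of that square, preserving the two labels. I would observe that each corner of a zigzag path presents exactly one of these two pairs. At a ``peak'' $v_{(p-k,q+k+1)}$, where the horizontal edge $e^+_{p-k,(q+k,q+k+1)}$ and the vertical edge $e^-_{(p-k-1,p-k),q+k+1}$ both terminate, the known pair lies in $E^+_-$ of the square to its lower left; applying $\phi$ produces the unique left and bottom edges and hence the new corner vertex $v_{(p-k-1,q+k)}$. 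Dually, at a ``valley'' $v_{(p-k-1,q+k+1)}$, where a vertical and a horizontal edge emanate, the known pair lies in $E^-_+$ of the square to its upper right, and $\phi^{-1}$ produces the unique top and right edges and the new vertex $v_{(p-k,q+k+2)}$. In both cases the new edges automatically satisfy the label condition \eqref{eq:akal}, since $\phi$ preserves labels.

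I would then run this completion as an induction. The staircase splits the quadrant $\{(K,L)\mid K\le p,\ L\ge q\}$ into the region strictly to its lower left and the region strictly to its upper right; the peaks fill one step into the former and the valleys one step into the latter, and in each case the newly created vertices and edges again form a staircase shifted by one, to which the same step applies. Iterating therefore fills every vertex and edge of the quadrant, exactly as the extension procedure in the proof of Lemma \ref{lem:muvx}. Uniqueness of the filling is forced at every step by the right-resolving property of $\L^-$ and the left-resolving property of $\L^+$ together with the bijectivity of $\phi$, so the reconstructed $(p,q)$-rectangle is uniquely determined by the zigzag path.

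Finally I would verify that the two maps are mutually inverse. The reconstruction never alters the staircase edges, so composing rectangle-to-zigzag with zigzag-to-rectangle returns the original path; conversely, a $(p,q)$-rectangle is itself a filling of its own staircase, and by the uniqueness just established it must coincide with the reconstructed one. This yields the desired bijection. I expect the main obstacle to be purely organizational: keeping the index bookkeeping of the two-directional iteration straight and confirming that the level condition $u\in V_l$, $v\in V_{l+2}$ of the local property holds at each corner, rather than any genuinely new difficulty beyond what Lemma \ref{lem:muvx} already supplies.
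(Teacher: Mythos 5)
Your proposal is correct and takes essentially the same route as the paper: the paper's proof likewise applies the local property at each peak of the zigzag path (producing the left and bottom edges of the square to its lower left) and at each valley (producing the top and right edges of the square to its upper right), yielding the ``fat'' zigzag path of Figure \ref{fig:fatzigzag}, and then iterates this layer-by-layer filling to obtain the whole $(p,q)$-rectangle. Your explicit verification of uniqueness via the resolving properties of $\L^-$ and $\L^+$, and of the two assignments being mutually inverse, is left implicit in the paper but is accurate.
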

\begin{proof}
Let
$\gamma =
(e^+_{p-k,(q+k,q+k+1)},e^-_{(p-k-1,p-k),q+k+1})_{k=0}^\infty
\in \CZ(v_{(p,q)}).
$
We put
\begin{align*}
v_{(p-k,q+k+1)} = & t(e^+_{p-k,(q+k,q+k+1)}) ( =t(e^-_{(p-k-1,p-k),q+k+1})), \\
v_{(p-k,q+k)} = & s(e^+_{p-k,(q+k,q+k+1)}) ( = s(e^-_{(p-k,p-k+1),q+k})).
\end{align*}
By the local property of $\lambda$-graph bisystem,
there exists a vertex
$v_{(p-k-1, q+k)} \in V_{(p-k-1, q+k)}$
and edges 
$e^+_{p-k-1,(q+k,q+k+1)},\, 
 e^-_{(p-k-1,p-k),q+k}
$
such that 
\begin{gather*}
\lambda^+(e^+_{p-k-1,(q+k,q+k+1)}) = \lambda^+(e^+_{p-k,(q+k,q+k+1)}), \\
\lambda^-(e^-_{(p-k-1,p-k),q+k}) = \lambda^-(  e^-_{(p-k-1,p-k),q+k+1}), \\
s(e^+_{p-k-1,(q+k,q+k+1)}) = s( e^-_{(p-k-1,p-k),q+k}) = v_{(p-k-1,q+k)}, \\
t(e^+_{p-k,(q+k,q+k+1)})= v_{(p-k,q+k+1)},
\qquad 
t( e^-_{(p-k-1,p-k),q+k}) = v_{(p-k,q+k)}
\end{gather*}
for all $k \in \Zp$
that are figured such as 
\begin{equation*} 
\begin{CD}
v_{(p-k, q+k)} @>e^+_{p-k,(q+k,q+k+1)}>> v_{(p-k,q+k+1)} \\
@AA{e^-_{(p-k-1,p-k),q+k}}A @AA{e^-_{(p-k-1,p-k),q+k+1}}A \\
v_{(p-k-1,q+k)} @>e^+_{p-k-1,(q+k,q+k+1)}>> v_{(p-k-1,q+k+1)}. \\
\end{CD} 
\end{equation*}
Similarly by the local property of $\lambda$-graph bisystem,
there exists a vertex
$v_{(p-k, q+k+2)} \in V_{(p-k, q+k+2)}$
and edges 
$e^+_{p-k,(q+k+1,q+k+2)},\, 
 e^-_{(p-k-1,p-k),q+k+2)}
$
such that 
\begin{gather*}
\lambda^+(e^+_{p-k,(q+k+1,q+k+2)})  = \lambda^+(e^+_{p-k-1,(q+k+1,q+k+2)}), \\
\lambda^-( e^-_{(p-k-1,p-k),q+k+2}) = \lambda^-( e^-_{(p-k-1,p-k),q+k+1}),\\
t(e^+_{p-k,(q+k+1,q+k+2)})  = t(e^-_{(p-k-1,p-k),q+k+2}) = v_{(p-k,q+k+2)},\\
s(e^+_{p-k,(q+k+1,q+k+2)})  = v_{(p-k,q+k+1)}, \qquad
s( e^-_{(p-k-1,p-k),q+k+2}) = v_{(p-k-1,q+k+2)}
\end{gather*}
for all $k \in \Zp$
that are figured such as 
\begin{equation*} 
\begin{CD}
v_{(p-k, q+k+1)} @>e^+_{p-k,(q+k+1,q+k+2)}>> v_{(p-k,q+k+2)} \\
@AA{e^-_{(p-k-1,p-k),q+k+1}}A @AA{e^-_{(p-k-1,p-k),q+k+2}}A \\
v_{(p-k-1,q+k+1)} @>e^+_{p-k-1,(q+k+1,q+k+2)}>> v_{(p-k-1,q+k+2)}. \\
\end{CD} 
\end{equation*}
Hence we have the following vertices and edges in Figure \ref{fig:fatzigzag}.
\begin{figure}[h]
\begin{equation*}
\begin{CD}
v_{(p,q)}@>{e^+_{p,(q,q+1)}}>>v_{(p,q+1)} @>{e^+_{p,(q+1,q+2)}}>>v_{(p,q+2)} 
@. @.
\\
@AA{e^-_{(p-1,p),q}}A @AA{e^-_{(p-1,p),q+1}}A @AA{e^-_{(p-1,p),q+2}}A @. 
@.\\
v_{(p-1,q)}@>{e^+_{p-1,(q,q+1)}}>> v_{(p-1,q+1)}@>{e^+_{p-1,(q+1,q+2)}}>>v_{(p-1,q+2)} 
@>{e^+_{p-1,(q+2,q+3)}}>> v_{(p-1,q+3)}@.
\\
@. @AA{e^-_{(p-2,p-1),q+1}}A @AA{e^-_{(p-2,p-1),q+2}}A 
@AA{e^-_{(p-2,p-1),q+3}}A 
@.\\
@. v_{(p-2,q+1)} @>{e^+_{p-2,(q+1,q+2)}}>> v_{(p-2,q+2)} 
    @>{e^+_{p-2,(q+2,q+3)}}>> v_{(p-2,q+3)} \cdots
\\
@. @. @AA{e^-_{(p-3,p-2),q+2}}A @AA{e^-_{(p-3,p-2),q+3}}A 
\\
@. @. v_{(p-3,q+2)} @>{e^+_{p-3,(q+2,q+3)}}>> v_{(p-3,q+3)}\cdots 
\\
@. @.  @. @AAA 
\\
@. @.  @. @. \cdots 
\end{CD}
\end{equation*}
\caption{A fat zigzag path}
\label{fig:fatzigzag}
\end{figure}By continuing these procedure, 
we finally obtain
a $(p,q)$-rectangle for which the upper left corner is $v_{(p,q)}$,
such as Figure \ref{fig:pqrec}.
\end{proof}
We note that by 
the left-resolving property of ${\mathcal{L}}^+$ and
the right-resolving property of ${\mathcal{L}}^-$,
a zigzag path $\gamma$ 
starting from a vertex $v_{(p,q)} \in V_{n(p,q)}$  
is determined by their vertices and their labeled sequences on the path.
This means that 
for a zigzag path $\gamma\in \CZ(v_{(p,q)})$ starting from $v_{(p,q)}$: 
$$
\gamma =
(e^+_{p-k,(q+k,q+k+1)},e^-_{(p-k-1,p-k),q+k+1})_{k=0}^\infty,
$$
by putting
\begin{align*}
v_{(p-k,q+k+1)} = & t(e^+_{p-k,(q+k,q+k+1)}) ( =t(e^-_{(p-k-1,p-k),q+k+1})), \\
v_{(p-k,q+k)} = & s(e^+_{p-k,(q+k,q+k+1)}) ( = s(e^-_{(p-k,p-k+1),q+k}))
\end{align*}
and
\begin{equation*}
x_{q+k} =  \lambda^+(e^+_{p-k,(q+k,q+k+1)}), \qquad
x_{p-k} =  \lambda^-(e^-_{(p-k-1,p-k),q+k+1}),
\end{equation*}
the path $\gamma$ is determined by their vertices 
$$v_{(p,q)}, v_{(p,q+1)},v_{(p-1,q+1)}, v_{(p-1,q+2)}, v_{(p-2,q+2)}, \dots  
$$
and the sequence of the labels   
$$
x_q, x_p, x_{q+1}, x_{p-1}, x_{q+2}, x_{p-2}, \dots 
$$
Hence the path $\gamma$ may be written
\begin{equation*}
\gamma =(v_{(p-k,q+k+1)}, v_{(p-k,q+k)}, x_{q+k}, x_{p-k})_{k\in \Zp},
\end{equation*}
where
$v_{(p-k,q+k+1)} \in V_{(p-k,q+k+1)},
v_{(p-k,q+k)} \in V_{(p-k,q+k)}.
$
In particular, for
$\gamma \in \CZ(v_{(-1,1)})$ with $p=-1, q=1$,
the path $\gamma$ is written such as 
\begin{equation}
\gamma =(v_{(-k-1,k+2)}, v_{(-k-1,k+1)}, x_{k+1}, x_{-k-1})_{k\in \Zp}.
\label{eq:gammavx}
\end{equation}
 As $v_{(-1,1)}\in V_{(-1,1)} =V_1$,
one may take a symbol $x_0 \in \Sigma$ 
together with $e^+_{-1,(0,1)}$ such that 
$\lambda^+(e^+_{-1,(0,1)}) = x_0$ 
and $t(e^+_{-1,(0,1)}) =v_{(-1,1)}$.
 Since the $\lambda$-graph bisystem $\LGBS$ satisfies FPCC, 
we have
 $P(v_{(-1,1)}) = F(v_{(-1,1)})
 $ 
so that 
$\lambda^-(e^-_{(-1,0),1}) = x_0$ and 
$s(e^-_{(-1,0),1}) =v_{(-1,1)}$
for some  $e^-_{(-1,0),1}$.
 We note that such directed edges 
 $e^+_{-1,(0,1)}, e^-_{(-1,0),1}$
 are uniquely determined by the symbol $x_0$ and the vertex $v_{(-1,1)}$
because of its left-resolving property of ${\mathcal{L}}^+$ and
its right-resolving property of ${\mathcal{L}}^-$.
  Let us denote by $\ZL$ the  set
\begin{equation*}
\ZL = \{
(x_0,\gamma) \in \Sigma\times \CZ_{(-1,1)} \mid
\exists 
e^+_{(0,1)} \in E^+_{0,1} ;
\lambda^+(e^+_{(0,1)}) = x_0, t(e^+_{(0,1)}) =v_{(-1,1)}
\}
\end{equation*}
of pairs of a zigzag path $\gamma$ starting from $v_{(-1,1)}$ and
a symbol $x_0$ preceding  $\gamma$
 that is figured as Figure \ref{fig:zigzag3}.
The symbol $x_0$ is called the head symbol of 
$(x_0,\gamma)\in \ZL$.
\begin{figure}[h]
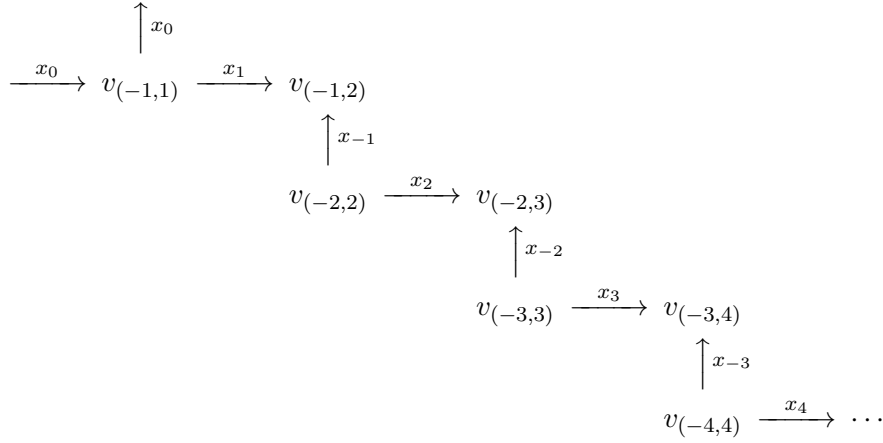

\begin{equation*}
\begin{CD}
@. @AA{x_0}A @. @. @.\\
@>{x_0}>>v_{(-1,1)}@>{x_1}>>v_{(-1,2)} @. @. @.
\\
@.@. @AA{x_{-1}}A @. @.  \\
@. @. v_{(-2,2)}@>{x_{2}}>>v_{(-2,3)} @. @. @. \\
@. @. @. @AA{x_{-2}}A @. @.@.\\
@. @. @. v_{(-3,3)} @>{x_{3}}>>v_{(-3,4)} @.@.\\
@. @. @. @. @AA{x_{-3}}A 
\\
@. @. @. @. v_{(-4,4)} @>{x_{4}}>>\cdots
\\
\\\end{CD}
\end{equation*}
\caption{A zigzag path with head symbol}
\label{fig:zigzag3}
\end{figure}
The label sequence  
\begin{equation*}
(\dots, x_{-4}, x_{-3}, x_{-2}, x_{-1}, x_0, x_1, x_2, x_3, x_4,\dots )
\end{equation*}
appearing in the above figure
is an element of the presented subshift $\Lambda_\L$ by the $\lambda$-graph bisystem $\LGBS$. 
We have the following proposition.
\begin{proposition}
Each element $(x_0,\gamma) \in \ZL$ uniquely extends to a whole configuration
on $\mbbZ2$ 
as an element of $\XL$.
Hence there exists a bijective correspondence between the 
set $\ZL$ of zigzag paths with head symbols 
and the set  $\XL$ of configurations on $\mbbZ2$.
\end{proposition}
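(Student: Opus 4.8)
The plan is to exhibit the bijection $\ZL \longleftrightarrow \XL$ by composing two correspondences already established in the excerpt. The first is the bijection between zigzag paths $\gamma \in \CZ(v_{(-1,1)})$ and $(-1,1)$-rectangles $\square_{(-1,1)}$ with upper left corner $v_{(-1,1)}$, proved in the lemma preceding this proposition. The second is Lemma \ref{lem:muvx}, which says that a $(p,q)$-rectangle together with a word $\mu \in P(v_{(p,q)})$ uniquely determines a whole configuration $x \in \XL$. The idea is that an element $(x_0,\gamma)\in\ZL$ carries exactly the data needed to feed into these two steps with $(p,q) = (-1,1)$.

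First I would recall from \eqref{eq:gammavx} that the zigzag path $\gamma\in\CZ(v_{(-1,1)})$ determines, via the preceding lemma, a unique $(-1,1)$-rectangle $\square_{(-1,1)}$ whose upper left corner is $v_{(-1,1)}$; its edge labels realize the biinfinite sequence $(\dots,x_{-2},x_{-1},x_1,x_2,\dots)$ along the two boundary rays. Next I would use the head symbol $x_0$ to produce the missing word $\mu\in P(v_{(-1,1)})$. Since $V_{(-1,1)} = V_1$ is at level one, the predecessor set $P(v_{(-1,1)})$ consists of single symbols; the defining condition of $\ZL$ guarantees precisely that there is an edge $e^+_{(0,1)}\in E^+_{0,1}$ with $\lambda^+(e^+_{(0,1)}) = x_0$ and $t(e^+_{(0,1)}) = v_{(-1,1)}$, which is exactly the statement that $x_0 \in P(v_{(-1,1)})$. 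Thus the pair $(x_0,\gamma)$ supplies both a $(-1,1)$-rectangle and a compatible word $\mu = x_0 \in P(v_{(-1,1)})$, and Lemma \ref{lem:muvx} then yields a unique configuration $x\in\XL$ with $\square_{(-1,1)}(x) = \square_{(-1,1)}$ and $P_x(v_{(-1,1)}) = x_0$. This defines a map $\ZL \longrightarrow \XL$.

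To build the inverse, I would start from an arbitrary configuration $x = \{(v_{(k,l)}, e^-_{(k-1,k),l}, e^+_{k,(l,l+1)})\}_{(k,l)\in\mbbZ2}\in\XL$ and read off its data at the corner $(-1,1)$. Restricting $x$ to the indices $k\le -1$, $1\le l$ recovers a $(-1,1)$-rectangle $\square_{(-1,1)}(x)$, and by the lemma on zigzag paths this rectangle corresponds to a unique $\gamma\in\CZ(v_{(-1,1)})$ starting from $v_{(-1,1)} = x_{(-1,1)}$. The head symbol is then taken to be $x_0 = P_x(x_{(-1,1)})$, the single-symbol predecessor word at $(-1,1)$; the edge $e^+_{-1,(0,1)}$ in the configuration $x$ certifies that the defining condition of $\ZL$ is met, so $(x_0,\gamma)\in\ZL$. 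The two assignments $\ZL\to\XL$ and $\XL\to\ZL$ are mutually inverse because both are governed by the same uniqueness clauses: the uniqueness in Lemma \ref{lem:muvx} forces the reconstructed configuration to agree with $x$, and the bijectivity in the zigzag lemma forces the reconstructed pair to agree with $(x_0,\gamma)$.

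The step I expect to be the main obstacle is checking the compatibility of the two boundary rays of $\gamma$ with the head symbol $x_0$ so that everything genuinely assembles into a single element of the \emph{presented subshift} $\Lambda_\L$, and hence into an element of $\XL$ satisfying the label-consistency condition \eqref{eq:akal}. Concretely, one must verify that the symbol $x_0$ coming from the $E^+$-edge and the symbol $x_0$ appearing on the matching $E^-$-edge $e^-_{(-1,0),1}$ coincide; this is where FPCC enters, via $P(v_{(-1,1)}) = F(v_{(-1,1)})$, together with the left-resolving property of $\L^+$ and the right-resolving property of $\L^-$ to guarantee that the edges $e^+_{-1,(0,1)}$ and $e^-_{(-1,0),1}$ are uniquely pinned down by $x_0$ and $v_{(-1,1)}$. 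Once this local consistency at the corner is secured, the global assembly is a direct consequence of the two cited lemmas, so the remaining verifications are routine bookkeeping of sources, terminals, and labels.
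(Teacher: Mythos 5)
Your proposal is correct, but it takes a genuinely different route from the paper's own proof, so let me compare the two. The common first step is the same: the lemma preceding the proposition converts $\gamma \in \CZ(v_{(-1,1)})$ into the unique $(-1,1)$-rectangle extending it. After that, however, the paper does \emph{not} invoke Lemma \ref{lem:muvx}; it redoes the extension by hand, first using the fact that $V_0$ is a singleton to identify $v_{(-1,0)} = v_{(0,1)}$, then repeatedly applying the local property of the $\lambda$-graph bisystem and FPCC (in the form $P(v)=F(v)$) to generate the row and column through index $0$, and finally the lower-left and upper-right regions of the configuration. Your argument instead observes that, since $n(-1,1)=1$, the predecessor set $P(v_{(-1,1)})$ consists of single symbols and the defining condition of $\ZL$ says exactly that $x_0 \in P(v_{(-1,1)})$; consequently Lemma \ref{lem:muvx} applied at $(p,q)=(-1,1)$ with $\mu = x_0$ produces the configuration together with its uniqueness in one stroke, and the bijection is then a composition of two already-established bijections, with inverse given by restriction: $x$ is sent to the pair consisting of the symbol $P_x(x_{(-1,1)})$ and the zigzag path contained in $\square_{(-1,1)}(x)$, the edge $x^+_{-1,(0,1)}$ of $x$ itself witnessing membership in $\ZL$. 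This organization is legitimate and in fact more economical: it avoids duplicating the step-by-step extension argument, which is essentially the content of the proof of Lemma \ref{lem:muvx}, and it makes both the uniqueness clause and the bijectivity claim structurally transparent (each follows from the corresponding uniqueness clause of the cited lemmas). What the paper's hands-on construction buys is an explicit picture of how the head symbol $x_0$ propagates to create the missing rows and columns, making visible exactly where FPCC and the singleton $V_0$ enter. Note finally that the ``main obstacle'' you flag --- compatibility of the $E^+$- and $E^-$-labels at the corner via $P(v_{(-1,1)})=F(v_{(-1,1)})$ --- needs no separate verification on your route: the only hypothesis Lemma \ref{lem:muvx} requires is $x_0 \in P(v_{(-1,1)})$, and the label consistency \eqref{eq:akal} of the resulting configuration is part of that lemma's conclusion rather than an input you must supply.
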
 
\begin{proof}
Let $(x_0,\gamma) \in \ZL$ be a zigzag path with head $x_0$.
We assume that $\gamma$ is given by the formula \eqref{eq:gammavx}.
By the preceding lemma, for the zigzag path
$\gamma \in \CZ(v_{(-1,1)})$ there uniquely exists a $(-1,1)$-rectangle 
$\square_{(-1,1)}$ for which upper left corner is $v_{(-1,1)}$.
Let $v_{(-1,0)}, v_{(0,1)} \in V_0$ 
be 
the source vertex of the edge in ${\mathcal{L}}^+$ labeled $x_0$,
the terminal vertex of the edge in ${\mathcal{L}}^-$ labeled $x_0$,
respectively.
We note that $v_{(-1,0)}= v_{(0,1)}$ because $V_0$ is singleton.
By the local property of $\lambda$-graph bisystem, there exist
$
v_{(-2,0)} \in V_{(-2,0)}
$
and
$
e^-_{(-2,-1),0}, \, e^+_{-2, (0,1)}
$
such that  
$
\lambda^-(e^-_{(-2,-1),0}) =x_{-1}, \, 
\lambda^+(e^+_{-2, (0,1)}) =x_{0}.
$
Similarly  there exist
$
v_{(0,2)} \in V_{(0,2)}
$
and
$
e^-_{(-1,0),2}, \, e^+_{0, (1,2)}
$
such that  
$
\lambda^-(e^-_{(-1,0),2}) =x_{0}, \, 
\lambda^+(e^+_{0, (1,2)}) =x_{1}.
$
The situation is figured such as in Figure \ref{fig:conflabel2}.
\begin{figure}
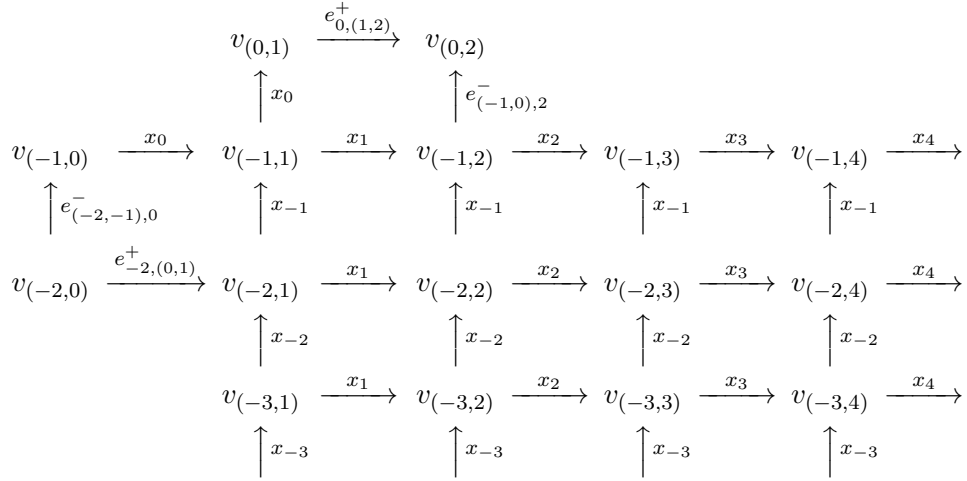

\begin{equation*}
\begin{CD}
 @. v_{(0,1)} @>{e^+_{0,(1,2)}}>>v_{(0,2)} @. @. @. \\
 @. @AA{x_0}A @AA{e^-_{(-1,0),2}}A    @. @. @. \\
 v_{(-1,0)}@>{x_0}>>v_{(-1,1)}@>{x_1}>>v_{(-1,2)} @>{x_2}>>v_{(-1,3)} @>{x_3}>>v_{(-1,4)}@>{x_4}>>\\
 @AA{e^-_{(-2,-1),0}}A @AA{x_{-1}}A @AA{x_{-1}}A @AA{x_{-1}}A @AA{x_{-1}}A @.\\
 v_{(-2,0)} @>{e^+_{-2,(0,1)}}>>v_{(-2,1)} @>{x_1}>>v_{(-2,2)} @>{x_2}>>v_{(-2,3)} 
@>{x_3}>>v_{(-2,4)}@>{x_4}>>\\
  @. @AA{x_{-2}}A @AA{x_{-2}}A @AA{x_{-2}}A @AA{x_{-2}}A @.\\
 @. v_{(-3,1)} @>{x_1}>>v_{(-3,2)} @>{x_2}>>v_{(-3,3)} @>{x_3}>>v_{(-3,4)}@>{x_4}>>\\
 @. @AA{x_{-3}}A @AA{x_{-3}}A @AA{x_{-3}}A @AA{x_{-3}}A @.\\
\end{CD}
\end{equation*}
\caption{Procedure from a zigzag path with head symbol}
\label{fig:conflabel2}
\end{figure}
We may continue these procedure to get vertices
$$
v_{(-k,0)} \in V_{(-k,0)}, \qquad v_{(0,k)} \in V_{(0,k)}, \qquad k=2,3, \dots
$$ 
and edges 
\begin{gather*}
e^-_{(-k-1,-k),0}, \qquad
e^+_{-k, (0,1)}, \qquad
e^-_{(-1,0),k}, \qquad
e^+_{0,(k,k+1)}, 
\end{gather*}
for $k=2,3,\dots $
such that 
\begin{gather*}
\lambda^-(e^-_{(-k-1,-k),0}) = x_{-k}, \quad
\lambda^+(e^+_{-k, (0,1)}) = x_0, \quad
\lambda^-(e^-_{(-1,0),k}) =x_0, \quad
\lambda^+(e^+_{0,(k,k+1)}) =x_k, 
\end{gather*}
for $k=2,3,\dots $
that are figured as in Figure \ref{fig:conflabel3}.
\begin{figure}
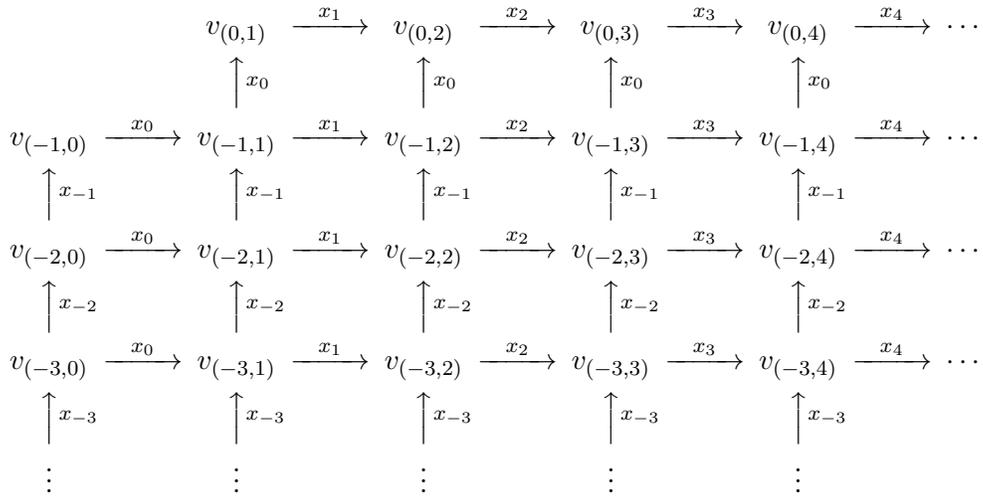

\begin{equation*}
\begin{CD}
 @. v_{(0,1)} @>{x_1}>>v_{(0,2)} @>{x_2}>>v_{(0,3)} @>{x_3}>>v_{(0,4)} @>{x_4}>> \cdots \\
 @. @AA{x_0}A @AA{x_0}A    @AA{x_0}A   @AA{x_0}A   @. \\
v_{(-1,0)}@>{x_0}>>v_{(-1,1)}@>{x_1}>>v_{(-1,2)} @>{x_2}>>v_{(-1,3)} @>{x_3}>>v_{(-1,4)}@>{x_4}>>\cdots 
\\
 @AA{x_{-1}}A @AA{x_{-1}}A @AA{x_{-1}}A @AA{x_{-1}}A @AA{x_{-1}}A @.\\
 v_{(-2,0)} @>{x_0}>>v_{(-2,1)} @>{x_1}>>v_{(-2,2)} @>{x_2}>>v_{(-2,3)} @>{x_3}>>v_{(-2,4)}@>{x_4}>>\cdots \\
   @AA{x_{-2}}A  @AA{x_{-2}}A @AA{x_{-2}}A @AA{x_{-2}}A @AA{x_{-2}}A @.\\
 v_{(-3,0)} @>{x_0}>> v_{(-3,1)} @>{x_1}>>v_{(-3,2)} @>{x_2}>>v_{(-3,3)} @>{x_3}>>v_{(-3,4)}@>{x_4}>>\cdots \\
 @AA{x_{-3}}A @AA{x_{-3}}A @AA{x_{-3}}A @AA{x_{-3}}A @AA{x_{-3}}A @.\\
\vdots @. \vdots @. \vdots @. \vdots @. \vdots @.  \\
\end{CD}
\end{equation*}
\caption{Further procedure from a zigzag path with head symbol}
\label{fig:conflabel3}
\end{figure}
\medskip

Since $x_{-1}\in P(v_{(-2,0)}) = F(v_{(-2,0)})$,
there exists $e^+_{-2,(-1,0)}$ such that 
$
\lambda^+(e^+_{-2,(-1,0)}) = x_{-1}, \,
t(e^+_{-2,(-1,0)}) = v_{(-2,0)}.
$
Put
$ v_{(-2,-1)} = s(e^+_{-2,(-1,0)}) \in V_0$.
By the local property of $\lambda$-graph bisystem and a similar manner to the previous way, 
we have vertices
$
v_{(-k,-1)} \in V_{(-k,-1)} $ and edges 
$
e^+_{-k,(-1,0)}, \, 
e^-_{(-k-1,-k), -1}
$
such that 
\begin{gather*}
v_{(-k,-1)} = s(e^+_{-k,(-1,0)} ) =s(e^-_{(-k,-k+1), -1} ), \\
t(e^+_{-k,(-1,0)}) = v_{(-k,0)}, \qquad 
s(e^+_{-k-1,(-1,0)} ) =s(e^-_{(-k-1,-k), -1} ), \\
\lambda^+(e^+_{-k,(-1,0)} ) = x_{-1}, \qquad
\lambda^-(e^-_{(-k-1,-k), -1} ) = x_{-k}
\end{gather*} 
for $k=2,3,\dots$
that is figured 
such as in Figure \ref{fig:conflabel5}. 
\begin{figure}
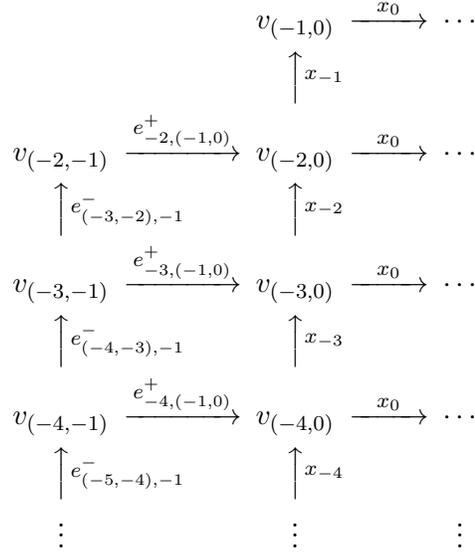

\begin{equation*}
\begin{CD}
 @. v_{(-1,0)} @>{x_0}>> \cdots \\
 @. @AA{x_{-1}}A @. \\
v_{(-2,-1)}@>{e^+_{-2, (-1,0)}} >>v_{(-2,0)}@>{x_0}>> \cdots \\
 @AA{e^-_{(-3,-2),-1}}A @AA{x_{-2}}A @. \\
 v_{(-3,-1)} @>{e^+_{-3,(-1,0)}}>>v_{(-3,0)} @>{x_0}>>\cdots \\
   @AA{e^-_{(-4,-3),-1}}A  @AA{x_{-3}}A \\
 v_{(-4,-1)} @>{e^+_{-4,(-1,0)}}>> v_{(-4,0)} @>{x_0}>>\cdots \\
 @AA{e^-_{(-5,-4),-1}}A @AA{x_{-4}}A @. \\
\vdots @. \vdots @. \vdots @.   \\
\end{CD}
\end{equation*}
\caption{Further procedure from a zigzag path}
\label{fig:conflabel5}
\end{figure}
\medskip

Therefore the element $(x_0,\gamma) \in \ZL$ extends to its lower left.
Similarly it can extend to its upper right, so that 
 $(x_0,\gamma) \in \ZL$ extends to a whole configuration on $\mbbZ2$ in a unique way.
\end{proof}

\begin{definition}
 A $\lambda$-graph bisystem $\LGBS$ is said to satisfy {\it condition}\/ (I)
if the cardinality 
$| \CZ(v_{(p,q)}) |$ of the set is more than or equal to two 
for any vertex $v_{(p,q)}\in V_{(p,q)}$ for every pair
$(p,q) \in \mbbZ2$.  
\end{definition} 
We then have the following proposition.
\begin{proposition}\label{prop:conditionI}
The configuration spase
$\XL$ is a Cantor set if and only if 
the $\lambda$-graph bisystem $\LGBS$ satisfies condition (I).
\end{proposition}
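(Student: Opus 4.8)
The plan is to invoke the topological facts already established—that $\XL$ is compact, totally disconnected, and metrizable—and to recall that a \emph{Cantor set} is precisely a nonempty, perfect, compact, totally disconnected, metrizable space. Since the last three properties are in hand, the entire statement reduces to the assertion that $\XL$ is perfect (has no isolated points) if and only if $\LGBS$ satisfies condition (I). Because the cylinder sets $U_{(\mu;v_{(p,q)})}$ form a clopen neighborhood basis, a configuration $x$ is isolated exactly when one of the basic neighborhoods containing it, namely some $U_{(P_x(x_{(p,q)});\,x_{(p,q)})}$ with $(p,q)\in\mbbZ2$, equals the singleton $\{x\}$. So I must decide when a basic cylinder can be a singleton.

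The key step is the cardinality identity $|U_{(\mu;v_{(p,q)})}| = |\CZ(v_{(p,q)})|$, valid for every vertex $v_{(p,q)}$ and every $\mu \in P(v_{(p,q)})$. I would prove it as follows. By Lemma \ref{lem:muvx}, a configuration $x \in \XL$ is determined by the pair consisting of its $(p,q)$-rectangle $\square_{(p,q)}(x)$ and the word $P_x(x_{(p,q)}) \in P(v_{(p,q)})$, and conversely every such pair occurs. Restricting attention to $U_{(\mu;v_{(p,q)})}$ fixes the corner vertex to be $v_{(p,q)}$ and the word to be $\mu$, so a configuration in $U_{(\mu;v_{(p,q)})}$ is determined precisely by its $(p,q)$-rectangle with upper left corner $v_{(p,q)}$. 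By the preceding lemma furnishing the bijection between zigzag paths starting at $v_{(p,q)}$ and $(p,q)$-rectangles with upper left corner $v_{(p,q)}$, these rectangles are in bijection with $\CZ(v_{(p,q)})$. Composing the two bijections yields the claimed identity; in particular the cardinality is independent of the chosen $\mu$.

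With this identity both directions follow quickly. For the \emph{if} direction, assume condition (I), so $|\CZ(v_{(p,q)})| \ge 2$ for all vertices. Then every basic cylinder set contains at least two elements, hence no basic neighborhood of any $x$ is a singleton, and $\XL$ has no isolated points; together with nonemptiness (each $\CZ(v_{(p,q)})$ being nonempty) and the standing topological properties, $\XL$ is a Cantor set. For the \emph{only if} direction I argue the contrapositive: if condition (I) fails there is a vertex $v_{(p,q)}$ with $|\CZ(v_{(p,q)})| \le 1$; choosing any $\mu \in P(v_{(p,q)})$ the identity gives $|U_{(\mu;v_{(p,q)})}| \le 1$, so this basic open set is a singleton and its unique point is isolated, whence $\XL$ is not perfect and not a Cantor set.

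The main obstacle is the cardinality identity together with the attendant nonemptiness bookkeeping. I must ensure that $P(v_{(p,q)})$ is nonempty, so that a genuine cylinder set is produced in the only-if direction; this follows from the Bratteli-diagram structure and FPCC (which gives $P(v)=F(v)$), and for $n(p,q)=0$ the word $\mu$ is empty and $P(v_{(p,q)})$ trivially contains it. I must also rule out the degenerate possibility $|\CZ(v_{(p,q)})| = 0$ by noting that, under the standing assumptions, every vertex extends to at least one zigzag path, so that a failure of condition (I) genuinely forces a singleton cylinder rather than an empty one.
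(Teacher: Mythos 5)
Your proof is correct and takes essentially the same route as the paper: both reduce the claim to perfectness of the already-established nonempty, compact, totally disconnected metric space $\XL$, and both use Lemma \ref{lem:muvx} together with the bijection between zigzag paths in $\CZ(v_{(p,q)})$ and $(p,q)$-rectangles with corner $v_{(p,q)}$ to see that a basic cylinder $U_{(\mu;v_{(p,q)})}$ contains exactly $|\CZ(v_{(p,q)})|$ configurations. Your explicit cardinality identity is merely a cleaner packaging of the two one-directional arguments the paper gives (condition (I) forces a second point in every cylinder; a vertex with a unique zigzag path forces a singleton cylinder), so the content is the same.
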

\begin{proof}
By the preceding discussion, we  know that $\XL$ is a totally disconnected
compact metric space.
We note that it is nonempty, 
because each member of $\XL$ bijectively corresponds to a member of
$\ZL$. 
By the definition of $\lambda$-graph bisystem, 
the set $\ZL$ is not empty.  
Hence 
$\XL$ is a Cantor set if and only if  it has no isolated points.

Suppose that  $\LGBS$ satisfies condition (I).
For $\mu \in P(v_{(p,q)})$ and the open neighborhood $U(\mu;v_{(p,q)})$ of $x$,
the set $\CZ(v_{(p,q)})$ contains more than or equal to two elements.
Hence the open neighborhood $U(\mu;v_{(p,q)})$ must contain
an element other than $x$. 
This shows that $\XL$ has no isolated points, proving it is a Cantor set.

Conversely, assume that $\XL$ is a Cantor set,
so that it has no isolated points.
Suppose that the cardinality of the set $\CZ(v_{(p,q)})$ is one
for some vertex $v_{(p,q)}$ for some $(p,q) \in \mbbZ2$,
and hence  $ \CZ(v_{(p,q)}) =\{\gamma \}$.
Take an admissible word $\mu \in P(v_{(p,q)})$ and its 
$(p,q)$-triangle determined by $\mu \in P(v_{(p,q)})$.
As $\gamma$ determines a $(p,q)$-rectangle in a unique way, 
the rectangle together with the $(p,q)$-triangle determines a unique whole configuration
on $\mbbZ2$,
that we denote by $x\in \XL$.
Then we see $x \in U(\mu;v_{(p,q)})$.
The open neighborhood however
does not contain any other point than $x$, because 
the cardinality of the set $\CZ(v_{(p,q)})$ is one. 
Therefore $x$ is an isolated point in $\XL$, a contradiction.
 \end{proof}


\section{Equivalence relation on the configuration space and its $C^*$-algebra}

We will define an equivalence relation on the configuration space
$\XL$, that will be shown to be AF and hence \'etale.
Two configurations $x, z \in \XL$  
 are said to be {\it equivalent}\/ if there exists $(p,q)\in \mbbZ2$
such that   
$\square_{(p,q)}(x) =\square_{(p,q)}(z)$ 
and written $x \sim z$.
If we specify $(p,q)$, then we write
$x \underset{(p,q)}{\sim} z$.
For $(p,q), (p',q') \in \mbbZ2$, 
define a partial order $\prec$  by 
$$
(p,q) \prec (p',q') \quad \text{ if } \quad  p' \le p \text{ and } q \le q'.
$$
The order 
$
(p,q) \prec (p',q')
$
means that $(p,q)$ locates at the upper left of $(p',q')$.
Hence
$$
x \underset{(p,q)}{\sim} z \quad \text{ implies }\quad  x \underset{(p',q')}{\sim} z
\quad \text{ if } \quad   (p,q) \prec (p',q') . 
$$
\begin{lemma}
The relation $\sim$ on $\XL$ is an equivalence relation.
\end{lemma}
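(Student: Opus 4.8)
The plan is to verify the three defining properties of an equivalence relation; reflexivity and symmetry are immediate from the definition, and the only property that uses any structure is transitivity, where the monotonicity of the rectangles recorded just above the lemma does all the work.

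First I would dispatch reflexivity and symmetry. For reflexivity, fix any $x \in \XL$ and any $(p,q) \in \mbbZ2$; then trivially $\square_{(p,q)}(x) = \square_{(p,q)}(x)$, so $x \underset{(p,q)}{\sim} x$ and hence $x \sim x$. Symmetry is equally direct, since the defining condition $\square_{(p,q)}(x) = \square_{(p,q)}(z)$ is itself symmetric in $x$ and $z$; thus $x \underset{(p,q)}{\sim} z$ gives $z \underset{(p,q)}{\sim} x$, and therefore $x \sim z$ implies $z \sim x$.

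The substantive step is transitivity. Suppose $x \sim z$ and $z \sim y$, so that there are indices $(p,q),(p',q') \in \mbbZ2$ with $x \underset{(p,q)}{\sim} z$ and $z \underset{(p',q')}{\sim} y$. I would then set $p'' = \min\{p,p'\}$ and $q'' = \max\{q,q'\}$ and check that $(p'',q'')$ dominates both indices in the order $\prec$. Indeed $p'' \le p$ and $q \le q''$ give $(p,q) \prec (p'',q'')$, while $p'' \le p'$ and $q' \le q''$ give $(p',q') \prec (p'',q'')$. Applying the monotonicity $x \underset{(p,q)}{\sim} z \Longrightarrow x \underset{(p'',q'')}{\sim} z$, and likewise $z \underset{(p',q')}{\sim} y \Longrightarrow z \underset{(p'',q'')}{\sim} y$, yields $\square_{(p'',q'')}(x) = \square_{(p'',q'')}(z)$ and $\square_{(p'',q'')}(z) = \square_{(p'',q'')}(y)$, whence $\square_{(p'',q'')}(x) = \square_{(p'',q'')}(y)$ and so $x \sim y$.

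There is no real obstacle here; the single point that needs care is confirming that the chosen index still lies in $\mbbZ2$, i.e. that $p'' < q''$. This is precisely where the membership conditions $p < q$ and $p' < q'$ enter, since $p'' = \min\{p,p'\} \le p < q \le \max\{q,q'\} = q''$ forces $p'' < q''$. Everything else is a direct invocation of the monotonicity property already established for the rectangles, so the write-up should be short.
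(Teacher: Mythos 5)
Your proof is correct and follows essentially the same route as the paper: the paper also reduces the lemma to transitivity and handles it by passing to the index $(\min\{p,p'\}, \max\{q,q'\})$, on which both rectangle equalities hold simultaneously. Your additional explicit checks (reflexivity, symmetry, and that $\min\{p,p'\} < \max\{q,q'\}$ so the index stays in $\mbbZ2$) are details the paper leaves implicit, but the argument is the same.
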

\begin{proof}
Suppose that $x,y,z \in \XL$ satisfy
$x \sim y$ and $y \sim z$.
Take 
$(p,q), (r,s)\in \mbbZ2$
such that   
$\square_{(p,q)}(x) =\square_{(p,q)}(y)$ and 
$\square_{(r,s)}(y) =\square_{(r,s)}(z)$. 
Put $u_0 =\min\{p,r\}, v_0 =\max\{q,s\}$ 
so that 
$u_0 <v_0$.
We thus have
$\square_{(u_0, v_0)}(x) =\square_{(u_0,v_0)}(y) =\square_{(u_0,v_0)}(z)$ 
and hence
$x \sim z$. 
\end{proof}
We set the equivalence relation and its subequivalence relations:
\begin{gather*}
R_\L = \{
(x,z) \in {\XL} \times {\XL} \mid x \sim z\}, \\
R_{\L_{(p,q)}} = \{
(x,z) \in {\XL} \times {\XL} \mid x \underset{(p,q)}{\sim} z\} \quad \text{ for } (p,q) \in\mbbZ2.
\end{gather*}
Since we have 
\begin{equation*}
R_{\L_{(p,q)}} \subset R_{\L_{(p',q')}}
\quad \text{ if } \quad
(p,q) \prec (p',q'),
\end{equation*}
and
\begin{equation*}
\bigcup_{(p,q)\in \mbbZ2 }R_{\L_{(p,q)}} = R_\L,
\end{equation*}
the equivalence relation  $R_\L$
is an inductive limit of the subequivalence relations
$\{ R_{\L_{(p,q)}} \mid (p,q) \in \mbbZ2 \}.$
We endow $R_{\L_{(p,q)}}$ with the relative topology
of the product topology
$R_\L\times R_\L$ as a subset for each $(p,q) \in \mbbZ2$.
Then we endow 
$R_\L =\varinjlim R_{\L_{(p,q)}}$
with its inductive limit topology.
We identify $\X_\L$ with the subset
$\{(x,x) \in \X_\L\times\X_\L\}$
of $R_\L$ as a topological space. 
Let $s,r: R_\L\longrightarrow \XL \subset R_\L$ be two canonical projections
defined by
$$
s(x,z) = z, \qquad
r(x,z) = x \quad \text{ for } (x,z) \in R_\L.
$$
\begin{lemma}
The maps $s,r: R_\L\longrightarrow \XL \subset R_\L $  are local homeomorphisms,
so that the equivalence relation $R_\L$ is \'etale.
\end{lemma}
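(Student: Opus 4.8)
The plan is to build, around an arbitrary point $(x,z) \in R_\L$, an explicit open bisection on which $r$ and $s$ restrict to homeomorphisms onto cylinder sets of $\XL$. First I would fix $(p,q) \in \mbbZ2$ with $\square_{(p,q)}(x) = \square_{(p,q)}(z)$, set $v := x_{(p,q)} = z_{(p,q)} \in V_{(p,q)}$, and record the two words $\mu := P_x(v)$ and $\nu := P_z(v)$ in $P(v)$. The candidate neighbourhood is
\[
W := (U_{(\mu;v)} \times U_{(\nu;v)}) \cap R_{\L_{(p,q)}},
\]
which contains $(x,z)$. Using Lemma \ref{lem:muvx} I would show that $r|_W \colon W \to U_{(\mu;v)}$ is a bijection: given $w \in U_{(\mu;v)}$, its $(p,q)$-rectangle $\square_{(p,q)}(w)$ has corner $v$, and pairing this rectangle with the word $\nu \in P(v)$ produces, by the uniqueness in Lemma \ref{lem:muvx}, a single configuration $w'$ with $\square_{(p,q)}(w') = \square_{(p,q)}(w)$ and $P_{w'}(v) = \nu$; thus $(w,w')$ is the unique element of $W$ lying above $w$. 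The map $s|_W \colon W \to U_{(\nu;v)}$ is treated symmetrically.

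For the homeomorphism property I would avoid estimating the ``triangle-swap'' section directly and instead invoke compactness. Each $R_{\L_{(p,q)}}$ is closed in $\XL \times \XL$ (agreement of the full rectangles is an intersection of clopen coordinate conditions), hence compact; and $U_{(\mu;v)}, U_{(\nu;v)}$ are clopen, so $W$ is compact while $U_{(\mu;v)}$ is compact Hausdorff. Since a continuous bijection from a compact space onto a Hausdorff space is a homeomorphism, $r|_W$ and $s|_W$ are homeomorphisms onto the clopen (hence open) sets $U_{(\mu;v)}$ and $U_{(\nu;v)}$. As such $W$ cover $R_\L$, this yields the local-homeomorphism statement, provided each $W$ is open in $R_\L$.

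The step I expect to be the main obstacle is precisely this openness in the inductive limit topology, which reduces to showing that each $R_{\L_{(p,q)}}$ is open (indeed clopen) in $R_\L$; since the system is directed with cofinal chain $R_{\L_{(-m,m)}}$, it suffices to see that $R_{\L_{(-n,n)}}$ is clopen in $R_{\L_{(-m,m)}}$ for $n \le m$, i.e. that $R_{\L_{(p,q)}}$ is clopen in $R_{\L_{(p',q')}}$ whenever $(p,q) \prec (p',q')$. Here I would use the resolving hypotheses: the smaller rectangle $\square_{(p',q')}$ sits inside $\square_{(p,q)}$, and the finitely many ``flank'' bands between them are filled deterministically from $\square_{(p',q')}$ together with the finitely many boundary labels $x_k$ for $p' < k \le p$ and $x_l$ for $q \le l < q'$, because $\L^-$ is right-resolving, $\L^+$ is left-resolving, and the local property propagates the corner vertex. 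Consequently, on $R_{\L_{(p',q')}}$ the condition $\square_{(p,q)}(w) = \square_{(p,q)}(w')$ is equivalent to the agreement of these finitely many labels, each a locally constant function of the configuration, which exhibits $R_{\L_{(p,q)}}$ as a finite intersection of clopen sets in $R_{\L_{(p',q')}}$. Granting this, $W$ is open in $R_\L$, the maps $r,s$ are local homeomorphisms, and $R_\L$ is \'etale. The delicate point to get right is that agreement on the small rectangle plus the finite flank labels forces agreement of the \emph{vertices} (not merely the labels) throughout the large rectangle; I would justify this by the same one-step rectangle-extension procedure used in the proof of Lemma \ref{lem:muvx}, which is single-valued under the resolving and local properties.
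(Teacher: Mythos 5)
Your proposal is correct and takes essentially the same route as the paper: you construct the identical basic open set $V(\mu,\nu;v_{(p,q)}) = R_{\L_{(p,q)}} \cap (U_{(\mu;v_{(p,q)})}\times U_{(\nu;v_{(p,q)})})$ around a given $(x,z)$ and invoke Lemma \ref{lem:muvx} to see that $r$ and $s$ restrict to bijections onto the cylinder sets $U_{(\mu;v_{(p,q)})}$ and $U_{(\nu;v_{(p,q)})}$. Where you go beyond the paper is only in filling the steps it declares straightforward --- the compact-to-Hausdorff argument for the homeomorphism property, and the openness of the bisection in the inductive limit topology, which you correctly reduce to $R_{\L_{(p,q)}}$ being clopen in $R_{\L_{(p',q')}}$ whenever $(p,q)\prec(p',q')$ and verify by the same flank-filling mechanism (left/right-resolving plus the local property) that drives the proof of Lemma \ref{lem:muvx} --- and both of these fills are sound.
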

\begin{proof}
For any $(x,z)\in R_{\L_{(p,q)}}$ 
for some $(p,q) \in \mbbZ2$,
we have
$\square_{(p,q)}(x) = \square_{(p,q)}(z)$.
There exist $v_{(p,q)}\in V_{(p,q)}$
and words $\mu,\nu \in P(v_{(p,q)})$
such that 
$x \in U(\mu;v_{(p,q)}), z \in U(\nu;v_{(p,q)})$.
Let $V(\mu,\nu;v_{(p,q)})$ be an open neighborhood of $(x,z)$ in $R_\L$ 
defined by
$$
V(\mu,\nu;v_{(p,q)}) 
= R_{\L_{(p,q)}} \cap (U(\mu;v_{(p,q)}) \times U(\nu;v_{(p,q)})).
$$
Then we have open sets
$$
s(V(\mu,\nu;v_{(p,q)})) = U(\nu;v_{(p,q)}),
\qquad
r(V(\mu,\nu;v_{(p,q)})) = U(\mu;v_{(p,q)})
$$
in $\XL$.
By Lemma \ref{lem:muvx},
the maps
$
s: V(\mu,\nu;v_{(p,q)})\longrightarrow  U(\nu;v_{(p,q)}),
\,
r: V(\mu,\nu;v_{(p,q)}) \longrightarrow  U(\mu;v_{(p,q)})
$
are bijective. It is straightforward to see that they are homeomorphic.
Therefore $s,r: R_\L\longrightarrow \XL$  are local homeomorphisms.
\end{proof}
We thus have the following proposition.
\begin{proposition}
The equivalence relation $R_\L$ is an \'etale AF equivalence relation.
\end{proposition}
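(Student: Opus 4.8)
The plan is to exhibit $R_\L$ as an increasing union of compact open subequivalence relations all of whose equivalence classes are finite, which is exactly the defining structure of an AF (approximately finite) étale equivalence relation; combined with the étale property established in the preceding lemma this gives the claim. Since $\{R_{\L_{(p,q)}}\}_{(p,q)\in\mbbZ2}$ is already known to be increasing along $\prec$ with union $R_\L$, and since the pairs $(-n,n)$, $n\in\N$, are cofinal in $(\mbbZ2,\prec)$, it suffices to work with the cofinal sequence $R_n := R_{\L_{(-n,n)}}$ and to show that each $R_{\L_{(p,q)}}$ is compact and open in $R_\L$ with finite equivalence classes.

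First I would record that each cylinder $U(\mu;v_{(p,q)})$ is clopen, hence compact: its complement in $\XL$ is the union of the finitely many cylinders $U(\mu';v'_{(p,q)})$ with $(\mu',v'_{(p,q)})\ne(\mu,v_{(p,q)})$, which is open. Next, for fixed $(p,q)$ I claim
\[
R_{\L_{(p,q)}} = \bigcup_{\substack{v_{(p,q)}\in V_{(p,q)}\\ \mu,\nu\in P(v_{(p,q)})}} V(\mu,\nu;v_{(p,q)}),
\]
and that this union is finite, since the vertex set $V_{(p,q)}=V_{n(p,q)}$ and each predecessor set $P(v_{(p,q)})\subset\Sigma^{n(p,q)}$ are finite. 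Indeed, for $(x,z)\in R_{\L_{(p,q)}}$ we have $x_{(p,q)}=z_{(p,q)}=:v_{(p,q)}$ and $(x,z)\in V(P_x(v_{(p,q)}),P_z(v_{(p,q)});v_{(p,q)})$, while each $V(\mu,\nu;v_{(p,q)})$ lies in $R_{\L_{(p,q)}}$ by construction. As every $V(\mu,\nu;v_{(p,q)})$ is open in $R_\L$, the displayed finite union shows $R_{\L_{(p,q)}}$ is open in $R_\L$; and since the preceding lemma gives that $s$ restricts to a homeomorphism of $V(\mu,\nu;v_{(p,q)})$ onto the compact set $U(\nu;v_{(p,q)})$, each $V(\mu,\nu;v_{(p,q)})$ is compact, so the finite union $R_{\L_{(p,q)}}$ is compact.

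To see that the $R_{\L_{(p,q)}}$-class of a configuration $x$ is finite, I would invoke Lemma \ref{lem:muvx}: any $z$ equivalent to $x$ at level $(p,q)$ shares the rectangle $\square_{(p,q)}(x)$, and by that lemma such a $z$ is then completely determined by the single word $\mu\in P(x_{(p,q)})$. Hence the class injects into the finite set $P(x_{(p,q)})$ and is finite. Consequently each $R_n$ is a compact open subequivalence relation of $R_\L$ with finite equivalence classes, that is, an elementary subequivalence relation, and $R_\L=\bigcup_{n\in\N}R_n$ carries the inductive limit topology of this increasing sequence.

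Invoking the characterisation of AF equivalence relations as those étale equivalence relations that are expressible as an increasing union of compact open elementary subequivalence relations then completes the argument. The one point demanding genuine care is the compactness of $R_{\L_{(p,q)}}$: it rests on the clopenness of the cylinders together with the finiteness of the indexing data $V_{(p,q)}$ and $P(v_{(p,q)})$, and on transporting compactness through the local homeomorphism $s$. Everything else is a direct assembly of Lemma \ref{lem:muvx} and the local-homeomorphism structure of $s$ and $r$ already obtained.
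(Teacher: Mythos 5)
Your argument is correct, and its skeleton is the same as the paper's: realize $R_\L$ as an increasing union of the compact open subequivalence relations $R_{\L_{(p,q)}}$ and invoke the Bratteli--Putnam characterization of AF equivalence relations (\cite[Theorem 6.17]{PutnamAMS}). The difference lies in how compactness and openness of the pieces are verified, and it is worth recording. The paper settles compactness in one line: $R_{\L_{(p,q)}}$ is closed in the compact space $\XL\times\XL$ (if pairs agreeing on the $(p,q)$-rectangle converge in the product topology, the limit pair still agrees there, since the rectangle region is exhausted by the finite triangles $\triangle_{(-m,m)}$), and it then simply asserts that each piece is compact open in $R_\L$. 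You instead write $R_{\L_{(p,q)}}$ as the finite union of the bisections $V(\mu,\nu;v_{(p,q)})$ over $v_{(p,q)}\in V_{(p,q)}$ and $\mu,\nu\in P(v_{(p,q)})$, which yields both properties simultaneously: openness because the preceding \'etale lemma already exhibits each $V(\mu,\nu;v_{(p,q)})$ as open in $R_\L$, and compactness by transporting the compactness of the clopen cylinders $U(\nu;v_{(p,q)})$ through the homeomorphism $s|_{V(\mu,\nu;v_{(p,q)})}$. Your route is a bit longer, but it is more scrupulous precisely where the paper is terse, namely the openness of $R_{\L_{(p,q)}}$ in the inductive limit topology; and your use of Lemma \ref{lem:muvx} to show that each $R_{\L_{(p,q)}}$-class injects into the finite set $P(x_{(p,q)})$ verifies explicitly that the pieces are elementary, a hypothesis the paper leaves to the general fact that a compact open subrelation of an \'etale equivalence relation automatically has finite classes (each $r^{-1}(x)\cap R_{\L_{(p,q)}}$ being compact and discrete). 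Both proofs are sound; the paper's is shorter, yours is more self-contained.
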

\begin{proof}
It suffices to show that 
$R_\L$ is an AF-equivalence relation.
For each $(p,q) \in \mbbZ2$, the subequivalence relation
$R_{\L_{(p,q)}}$ is closed in $\XL\times\XL$ and hence compact,
so that 
$R_{\L_{(p,q)}}$ is compact open in $R_\L$.
This shows that $R_\L$ is an AF-equivalence relation (cf. \cite[Theorem 6.17]{PutnamAMS}).
\end{proof}


We will next define irreducibility on $\L$.
\begin{definition}
A $\lambda$-graph bisystem $\LGBS$ is said to be {\it irreducible}\/
if for any $p \in \N$, 
a vertex $v_{(-p,p)} \in V_{(-p,p)}$
and a zigzag path $\gamma \in \CZ{(v_{(-1,1)})}$ starting from a vertex 
$v_{(-1,1)} \in V_1$,
there exists a zigzag path $\eta \in \CZ{(v_{(-p,p)})}$ starting from 
the vertex   $v_{(-p,p)}$ such that the path meets to $\gamma$,
that is, there exists a vertex $v_{(p', q')}\in V_{(p',q')}$ 
for some $p<p'<q'$ such that both $\gamma$ and $\eta$ pass the vertex $v_{(p', q')}$.
\end{definition}
In general, an equivalence relation $R$ on a compact metric space $X$
is said to be {\it minimal}\/ if  
for any point $x \in X$, its $R$-equivalence class
$[x]_R$ defined by 
$[x]_R =\{z \in X \mid x\sim z\}$
is dense in $X$ (cf. \cite[Definition 2.22]{PutnamAMS}).
With this definition, we have the following proposition.
\begin{proposition}\label{prop:irreducible}
A $\lambda$-graph bisystem $\LGBS$ is irreducible if and only if the equivalence relation
$R_\L$ on $\XL$ is minimal.
\end{proposition}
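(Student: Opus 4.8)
The plan is to prove both implications by passing back and forth between the combinatorics of zigzag paths and the geometry of the relation $R_\L$, using the correspondences already established: a configuration is the same data as a pair (head symbol, zigzag path from $v_{(-1,1)}$); a zigzag path from $v_{(p,q)}$ corresponds bijectively to a $(p,q)$-rectangle $\square_{(p,q)}$; and by Lemma \ref{lem:muvx} a configuration is pinned down by a $(p,q)$-rectangle together with a word $\mu \in P(v_{(p,q)})$. I will also use that the balls of the metric $d$ are exactly the symmetric cylinders $U(\mu;v_{(-m,m)})$, so that to check density of an equivalence class it suffices to meet every nonempty $U(\mu;v_{(-m,m)})$ with $m \ge 1$. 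The geometric fact driving everything is that a zigzag path issued from a symmetric corner $v_{(-m,m)}$ visits only the two central anti-diagonals $\{k+l=0\}$ and $\{k+l=1\}$, the same ones visited by a path from $v_{(-1,1)}$; hence two such paths run through a common family of positions $(-j,j)$ and $(-j,j+1)$ and are always in the same phase there.

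First I would show that irreducibility implies minimality. Fix $x \in \XL$ and a nonempty cylinder $U(\mu;v_{(-m,m)})$, and let $\gamma \in \CZ(v_{(-1,1)})$ be the zigzag path of $x$ from the origin. Applying irreducibility with $p=m$, the vertex $v_{(-m,m)}$ and the path $\gamma$, I obtain $\eta \in \CZ(v_{(-m,m)})$ meeting $\gamma$ at a common vertex $v_{(a,b)}$; since the staircase positions issuing from $v_{(-m,m)}$ satisfy $a \le -m$ and $b \ge m$, the meeting point lies deeper than the corner, i.e.\ $(-m,m)\prec(a,b)$. As the two staircases are in the same phase at $(a,b)$, I may splice $\eta$ to $\gamma$ there, replacing the tail of $\eta$ beyond $v_{(a,b)}$ by that of $\gamma$; this yields a legitimate path in $\CZ(v_{(-m,m)})$ agreeing with $\gamma$ from $v_{(a,b)}$ onward. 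By Lemma \ref{lem:muvx} the pair $(\eta,\mu)$ determines a configuration $z \in U(\mu;v_{(-m,m)})$, and because the zigzag path of $z$ from $v_{(a,b)}$ coincides with that of $x$ (and $z_{(a,b)}=x_{(a,b)}=v_{(a,b)}$), the corresponding rectangles agree, so $\square_{(a,b)}(z)=\square_{(a,b)}(x)$ and $z \sim x$. Thus $[x]_{R_\L}$ meets every basic cylinder, giving minimality.

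For the converse I would argue by contraposition. If $\L$ is not irreducible there are $p \in \N$, a vertex $v_{(-p,p)}$ and $\gamma \in \CZ(v_{(-1,1)})$ such that no zigzag path from $v_{(-p,p)}$ meets $\gamma$. Choosing a head symbol $x_0$ with $(x_0,\gamma)\in\ZL$ and passing to the associated configuration $x \in \XL$, pick any $\nu \in P(v_{(-p,p)})$, so $U(\nu;v_{(-p,p)})$ is nonempty. I claim $[x]_{R_\L}\cap U(\nu;v_{(-p,p)})=\emptyset$: a point $z$ in the intersection has $z_{(-p,p)}=v_{(-p,p)}$, hence carries a path $\eta \in \CZ(v_{(-p,p)})$, and satisfies $\square_{(r,s)}(z)=\square_{(r,s)}(x)$ for some $(r,s)$. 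Taking $j \ge \max\{-r,s,p\}$, the position $(-j,j)$ lies on both $\eta$ and $\gamma$ and inside the region where $z$ and $x$ coincide, so the vertex of $\eta$ at $(-j,j)$ equals $z_{(-j,j)}=x_{(-j,j)}$, the vertex of $\gamma$ there. Hence $\eta$ meets $\gamma$, contradicting the hypothesis; so $[x]_{R_\L}$ is not dense and $R_\L$ is not minimal.

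The step I expect to be most delicate is the faithful two-way dictionary between ``two zigzag paths sharing a vertex'' and ``two configurations sharing a rectangle'': specifically, verifying that within a fixed configuration the staircase read from a deeper corner is exactly the tail of the staircase from a shallower one, and that splicing two paths at a common vertex produces a path that still thickens, via the local property and the resolving conditions, to a genuine configuration. Once this bookkeeping is secured, both implications reduce to the elementary observation that the relevant staircases all travel along the two central anti-diagonals and can therefore be made to collide precisely when $\L$ is irreducible.
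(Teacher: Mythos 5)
Your proof is correct and is essentially the paper's own argument: in the forward direction you use irreducibility to produce a zigzag path from the cylinder's corner vertex that meets the staircase of $x$, then build from it and the word $\mu$ (via the path--rectangle correspondence and Lemma \ref{lem:muvx}) a configuration in the cylinder sharing a rectangle with $x$, while your converse is the same contraposition through the configuration extending $(x_0,\gamma)$, with the disjointness $U(\nu;v_{(-p,p)})\cap[x]_{R_\L}=\emptyset$ spelled out where the paper merely asserts it. Your splice-then-thicken construction and the paper's extension of $\square_{(p',q')}(x)$ backwards along $\eta$ yield the same configuration, and your extra bookkeeping (reduction to symmetric cylinders, checking the splice is a genuine zigzag path, e.g.\ stepping once more along the path if the meeting vertex lies at a position of the form $(-j,j+1)$) only tightens details the paper leaves implicit.
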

\begin{proof}
Suppose that a $\lambda$-graph bisystem $\LGBS$ is irreducible.
For any two configurations $x,z \in \XL$,
take a neighborhood $U(\mu,v_{(p,q)})$ of $z$,
where the vertex of $z$ at $(p,q)$ is $v_{(p,q)}$.
Let $\gamma \in \CZ{(v_{(-1,1)})}$ 
be the zigzag path obtained by restricting $x$.
By irreducibility,
there exists a zigzag path $\eta \in \CZ{(v_{(p,q)})}$ starting from the vertex
$v_{(p,q)}$ such that $\eta$ meets to $\gamma$ at a vertex $(p',q')$.
Consider the $(p',q')$-rectangle $\square_{(p',q')}(x)$ of $x$.
By using the zigzag path $\eta$ from $v_{(p,q)}$ to $v_{(p',q')}$,
the $(p',q')$-rectangle $\square_{(p',q')}(x)$ extends to $(p,q)$-rectangle.
As $\mu \in P(v_{(p,q)})$, there exists a unique configuration 
$z \in \XL$ obtained by an extension of the $(p,q)$-rectangle and the word $\mu$.
Since $\square_{(p',q')}(z) = \square_{(p',q')}(x)$, 
we have 
$z\in [x]_{R_\L}$ and 
$z \in U(\mu;v_{(p,q)})$.
This shows that 
$ [x]_{R_\L}$ is dense in $\XL$.

Conversely assume that $R_\L$ is minimal.
Suppose that $\LGBS$ is not irreducible,
so that there exist $p \in \N$,
a vertex $v_{(-p,p)}\in V_{(-p,p)}$
and a zigzag path $\gamma \in \CZ{(v_{(-1,1)})}$ starting from $v_{(-1,1)}$
such that 
any zigzag path $\eta \in \CZ{(v_{(-1,1)})}$
starting from the vertex $v_{(-p,p)}$
can not meet to $\gamma$.
Take $x_0\in \Sigma$ such that 
$(x_0,\gamma) \in \ZL$
and hence the pair
$(x_0,\gamma)$ uniquely extends to a configuration denoted by $x\in \XL$.
Take $\mu \in P(v_{(-p,p)})$.
Since any zigzag path $\eta \in \CZ{(v_{(-p,p)})}$ starting from $v_{(-p,p)}$
does not meet to $\gamma$, we have
$$
U(\mu;v_{(p,p)}) \cap [x]_{R_\L} = \emptyset.
$$
This shows that $R_\L$ is not minimal.
\end{proof}

\medskip

We will next study the $C^*$-algebra $C^*(R_\L)$ of the equivalence relation $R_\L$.  
Let us recall the construction of the $C^*$-algebra $C^*(R_\L)$ of the equivalence relation $R_\L$.
Denote by
$C_c(R_\L)$ the $*$-algebra of complex valued continuous compactly supported functions on $R_\L$
with its product and $*$-operation defined by 
\begin{gather*}
(f*g)(x,y) = \sum_{z\in [x]_{R_\L}} f(x,z)g(z,y), \\
f^*(x,y) =\overline{f(y,x)}, \qquad f,g \in C_c(R_\L), \quad (x,y) \in R_\L. 
\end{gather*}
We remark that the support $K$ of a function in $C_c(R_\L)$ is compact,
and the equivalence relation $R_\L$ is \'etale,
the set $K \cap \{(x,z) \in R_\L\mid z\in [x]_{R_\L}\}$ is finite
and moreover $\sup_{x\in \XL}| K \cap \{(x,z) \in R_\L\mid z\in [x]_{R_\L}\}| <\infty,$
 so that $f*g$ belongs to $C_c(R_\L)$.
  The $*$-algebra $C_c(R_\L)$ has a natural right $C(\XL)$-module structure with 
$C(\XL)$-valued inner product defined by
\begin{equation*}
(\xi\cdot f)(x,z) =\xi(x,z)f(z), \qquad
\langle\xi,\eta\rangle(x) 
=\sum_{z \in [x]_{R_\L}}\overline{\xi(x,z)}\eta(x,z)
\end{equation*}
for $\xi, \eta \in C_c(R_\L), f \in C(\XL).$
Denote by $\ell^2(R_\L)$ 
the Hilbert $C(\XL)$-module obtained by the completion
of $C_c(R_\L)$ by the norm induced by the inner product. 
Let
$ B(\ell^2(R_\L))$ denote the $C^*$-algebra of bounded adjointable right module maps
on $\ell^2(R_\L)$.
Define the left-regular representation
$\lambda: C_c(R_\L)\longrightarrow B(\ell^2(R_\L))$
by setting
$
\lambda(f)\xi = f* \xi
$
for
$
f, \xi \in C_c(R_\L).
$
The reduced $C^*$-algebra $C^*_{\red}(R_\L)$ of the equivalence relation
$R_\L$ is defined by the norm closure of the algebra
$\lambda(C_c(R_\L))$ on $B(\ell^2(R_\L)).$
We note that the full $C^*$-algebra $C^*_{\full}(R_\L)$ of the equivalence relation
$R_\L$ is defined by the norm completion  of the $*$-algebra
$C_c(R_\L)$ by the universal norm 
$
\| \, \cdot \,  \|_{univ}
$ 
defined by
$$
\| f  \|_{univ} =\sup\{
\|\pi(f)\| \mid \pi :C_c(R_\L)\longrightarrow B(H) *-\text{representation on a Hilbert space }H
\}.
$$
Now our equivalence relation $R_\L$ is \'etale and AF,
so that it is amenable. This implies that the two $C^*$-algebras
$C^*_{\red}(R_\L)$ and $C^*_{\full}(R_\L)$ are canonically isomorphic,
which we denote by $\FL$.
We know the  following result, that is direct from a general theory
of AF-algebras.
\begin{theorem}
The $C^*$-algebra $\FL$ is an AF-algebra. It is simple if and only if 
the $\lambda$-graph bisystem $\L$ is irreducible.  
\end{theorem}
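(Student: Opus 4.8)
The plan is to handle the two assertions separately, in each case reducing to a standard fact about the $C^*$-algebras of \'etale AF equivalence relations in the sense of \cite[Chapter 6]{PutnamAMS}, whose hypotheses have already been checked for $R_\L$ above.

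For the first assertion I would realize $\FL$ as an increasing union of AF subalgebras. Since each $R_{\L_{(p,q)}}$ is compact open in $R_\L$ and $R_\L=\bigcup_{(p,q)\in\mbbZ2}R_{\L_{(p,q)}}$, every function in the dense $*$-subalgebra $C_c(R_\L)$ has support contained in some $R_{\L_{(p,q)}}$, so that $C_c(R_\L)=\bigcup_{(p,q)}C_c(R_{\L_{(p,q)}})$ and hence $\FL=\overline{\bigcup_{(p,q)}C^*(R_{\L_{(p,q)}})}$. It then suffices to show each finite stage $C^*(R_{\L_{(p,q)}})$ is AF, since a closed increasing union of AF subalgebras is again AF. For the finite stage I would use Lemma \ref{lem:muvx}: decomposing $\XL=\bigsqcup_{v\in V_{(p,q)}}X_v$ according to the corner vertex $v=x_{(p,q)}$, a configuration in $X_v$ is determined by its $(p,q)$-rectangle together with the word $P_x(v)\in P(v)$, so the map $x\mapsto (\text{rectangle},\text{word})$ identifies $X_v$ homeomorphically with $Q_v\times P(v)$, where $Q_v$ is the (possibly empty) compact totally disconnected space of $(p,q)$-rectangles with corner $v$ and $P(v)$ is finite. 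Under this identification $R_{\L_{(p,q)}}$ restricted to $X_v$ is the product of $Q_v$ with the full equivalence relation on $P(v)$, whence
\[
C^*(R_{\L_{(p,q)}})\cong\bigoplus_{v\in V_{(p,q)}}M_{|P(v)|}\!\left(C(Q_v)\right).
\]
Each $C(Q_v)$ is AF because $Q_v$ is compact and totally disconnected, so each summand, and therefore the whole finite stage, is AF.

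For the second assertion I would first invoke Proposition \ref{prop:irreducible} to replace irreducibility of $\L$ by minimality of $R_\L$, reducing the statement to the assertion that $\FL=C^*(R_\L)$ is simple if and only if $R_\L$ is minimal. Here I would appeal to the simplicity criterion for amenable \'etale equivalence relations: $R_\L$ is an equivalence relation, hence a principal groupoid (so automatically topologically principal), it is amenable because it is \'etale AF, and it is second countable because $\XL$ is metrizable. Granting minimality, the only open $R_\L$-invariant subsets of $\XL$ are $\emptyset$ and $\XL$, so the only closed two-sided ideals of $\FL$ are $0$ and $\FL$, and $\FL$ is simple. Conversely, if $R_\L$ is not minimal there is a point $x$ whose orbit closure $Y=\overline{[x]_{R_\L}}$ is a proper closed $R_\L$-invariant subset (invariance of the closure following from the fact that $s,r$ are local homeomorphisms); then $\XL\setminus Y$ is a nonempty proper open invariant set and yields a proper nonzero ideal, so $\FL$ is not simple.

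The main obstacle is the forward implication of the simplicity criterion, namely that minimality forces every nonzero closed ideal of $\FL$ to exhaust the algebra. This is exactly the point at which amenability and (topological) principality of $R_\L$ are needed: they guarantee that $C^*_{\red}(R_\L)=C^*_{\full}(R_\L)$ and that every ideal is detected on the unit space $\XL$ through an open invariant subset, so that there are no ideals beyond those coming from invariant sets. Once these hypotheses are in place---amenability from the AF structure, principality from $R_\L$ being an equivalence relation, second countability from metrizability of $\XL$---the conclusion is the standard AF-groupoid simplicity theorem of \cite{PutnamAMS}, and the only remaining routine verifications are that $\overline{[x]_{R_\L}}$ is invariant and that an open invariant set indeed generates a proper ideal.
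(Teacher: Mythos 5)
Your proof is correct, and its second half is exactly the paper's argument: the paper likewise quotes the standard fact that the $C^*$-algebra of an (amenable, second countable) \'etale equivalence relation is simple if and only if the relation is minimal, and then applies Proposition \ref{prop:irreducible} to translate minimality into irreducibility of $\L$; you merely unpack the ideal-theoretic content of that fact (ideals correspond to open invariant subsets of $\XL$, orbit closures are invariant). Where you genuinely diverge is the AF assertion. The paper disposes of it in one sentence by citing the general principle that the $C^*$-algebra of an AF equivalence relation is AF, the AF-ness of $R_\L$ having been established just before via compact openness of the subrelations $R_{\L_{(p,q)}}$. You instead prove this principle concretely in the situation at hand: using Lemma \ref{lem:muvx} to identify, for each corner vertex $v \in V_{(p,q)}$, the clopen set $X_v=\{x \in \XL \mid x_{(p,q)}=v\}$ with $Q_v\times P(v)$, so that the finite-stage algebra becomes $\bigoplus_{v\in V_{(p,q)}}M_{|P(v)|}\bigl(C(Q_v)\bigr)$, and then exhausting $\FL$ by these stages. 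This buys a self-contained argument that avoids appealing to \cite[Theorem 6.17]{PutnamAMS} and, as a bonus, makes the Bratteli-diagram structure of $\FL$ visible, which the paper's citation hides; the paper's route is shorter but entirely dependent on the general theory. Two small points you should make explicit to complete your version: the relation $R_{\L_{(p,q)}}$ preserves the decomposition $\XL=\bigsqcup_{v}X_v$ (immediate, since equality of $(p,q)$-rectangles forces equality of the corner vertices), and the bijection of Lemma \ref{lem:muvx} is in fact a homeomorphism, with $Q_v$ topologized via the identification $Q_v\cong U_{(\mu;v_{(p,q)})}$ for any fixed $\mu\in P(v)$, which also shows $Q_v$ is compact, metrizable and totally disconnected so that $C(Q_v)$ is indeed AF. Both verifications are routine.
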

\begin{proof}
Since the equivalence relation is AF, the algbera $\FL$ is AF.
It is well-known that 
the $C^*$-algebra of \'etale equivalence relation is simple if and only if the equivalence relation is minimal. 
By Proposition \ref{prop:irreducible}, we know that $C^*(R_\L)$ is simple if and only if $\L$ is irreducible. 
\end{proof}
There is a well-known classification result for general \'etale AF equivalence relations 
and its AF-algebras due to Bratteli--Elliott--Krieger
that we state below (cf. \cite[Theorem 9.2]{PutnamAMS}).
\begin{theorem}[{\cite[Bratteli]{Bratteli}, \cite[Elliott]{Elliott}, 
\cite[Krieger]{Kr80MathAnn}}]
For two $\lambda$-graph bisystems 
$\L_1=(\L_1^-,\L_1^+), \L_2=(\L_2^-,\L_2^+)$ satisfying FPCC.
The following are equivalent.
\hspace{7cm}
\begin{enumerate}
\renewcommand{\theenumi}{\roman{enumi}}
\renewcommand{\labelenumi}{\textup{(\theenumi)}}
\item The \'etale equivalence relations $R_{{\mathcal{L}_1}}$ and $R_{{\mathcal{L}_2}}$are isomorphic.
\item The AF-algebras $\F_{{\mathcal{L}_1}}$ and $\F_{{\mathcal{L}_2}}$ are isomorphic.
\item The K-groups $K_0(\F_{{\mathcal{L}_1}})$ and $K_0(\F_{{\mathcal{L}_2}})$
with the positions of the classes of  their units are order isomorphic.
\end{enumerate}
\end{theorem}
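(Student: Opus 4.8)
The plan is to deduce the statement from the general classification of \'etale AF equivalence relations, applied to the two relations $R_{\L_1}$ and $R_{\L_2}$. The preparatory work is already in hand: we have shown that each $R_{\L_i}$ is an \'etale AF equivalence relation on the totally disconnected compact space $\XL_i$, being the increasing union $\bigcup_{(p,q)\in\mbbZ2} R_{\L_{i,(p,q)}}$ of compact open subequivalence relations (cf.\ \cite[Theorem 6.17]{PutnamAMS}), and that $\F_{\L_i}=C^*(R_{\L_i})$ is the associated AF-algebra carrying the canonical diagonal $C(\XL_i)$. Thus the whole statement is an instance of \cite[Theorem 9.2]{PutnamAMS}, and it remains only to trace the three implications and to locate where the combinatorial content enters.

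The two forward implications are routine functoriality. For (i)$\Rightarrow$(ii), an isomorphism of the \'etale equivalence relations $R_{\L_1}$ and $R_{\L_2}$ is in particular an isomorphism of the associated \'etale groupoids, and the groupoid $C^*$-algebra construction is functorial; hence it induces a $*$-isomorphism $\F_{\L_1}\cong\F_{\L_2}$ which moreover carries $C(\XL_1)$ onto $C(\XL_2)$. For (ii)$\Rightarrow$(iii), a $*$-isomorphism $\F_{\L_1}\cong\F_{\L_2}$ induces, by functoriality of $K_0$, an order isomorphism $K_0(\F_{\L_1})\cong K_0(\F_{\L_2})$ sending the class $[1_{\F_{\L_1}}]$ of the unit to $[1_{\F_{\L_2}}]$.

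The substantive implication is (iii)$\Rightarrow$(i). First, by Elliott's theorem (\cite{Elliott}) the pointed ordered group $(K_0(\F_{\L_i}),K_0(\F_{\L_i})_+,[1_{\F_{\L_i}}])$ is a complete invariant of the AF-algebra $\F_{\L_i}$, so an order isomorphism as in (iii) produces a $*$-isomorphism $\F_{\L_1}\cong\F_{\L_2}$. The difficulty is that an abstract $*$-isomorphism need not respect the diagonals, whereas recovering an isomorphism of the equivalence relations themselves requires a diagonal-preserving isomorphism. This is resolved at the level of Bratteli diagrams: each $R_{\L_i}$ is realized, up to isomorphism of equivalence relations, as the tail (cofinal) equivalence relation on the infinite path space of a Bratteli diagram $B_i$ whose dimension group is $K_0(\F_{\L_i})$, and by Bratteli's criterion (\cite{Bratteli}, \cite{Kr80MathAnn}) an order isomorphism of the pointed dimension groups is implemented by a telescoping and intertwining of $B_1$ and $B_2$. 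Such an intertwining determines a homeomorphism of the path spaces that conjugates the two tail equivalence relations, hence an isomorphism $R_{\L_1}\cong R_{\L_2}$.

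I expect this diagram-level intertwining---upgrading the abstract $K$-theoretic isomorphism to an honest isomorphism of equivalence relations that matches unit spaces---to be the only real point, and it is precisely the content of the cited Bratteli--Elliott--Krieger theorem. In the writeup the proof is therefore a direct appeal to \cite[Theorem 9.2]{PutnamAMS}, once one has recorded that the $R_{\L_i}$ are \'etale AF equivalence relations and that $\F_{\L_i}=C^*(R_{\L_i})$, both of which are already established.
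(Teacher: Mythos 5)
Your proposal is correct and matches the paper's treatment: the paper offers no independent proof of this theorem, but simply records that the $R_{\mathcal{L}_i}$ are \'etale AF equivalence relations with $\F_{\mathcal{L}_i}=C^*(R_{\mathcal{L}_i})$ and then cites the Bratteli--Elliott--Krieger classification (cf.\ \cite[Theorem 9.2]{PutnamAMS}), which is exactly the appeal you make. Your additional tracing of the three implications (functoriality for (i)$\Rightarrow$(ii) and (ii)$\Rightarrow$(iii), and the Bratteli-diagram intertwining for (iii)$\Rightarrow$(i)) is a faithful sketch of the standard proof of that cited theorem.
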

In our setting, we have one more structure on the equivalence relation
$R_\L$ on $\XL$.
That is the shift homeomorphism
$\sigma_\L$ on $\XL$ arising from the shift homeomorphism
on the presented  subshift $\Lambda_\L$.
The homeomorphism $\sigma_\L$ on $\XL$
is defined by setting
\begin{equation}
\sigma_\L(x) =\{ (v_{(k+1,l+1)}, e^-_{(k,k+1),l+1}, e^+_{k+1,(l+1,l+2)} ) \}_{(k,l) \in \mbbZ2}
\label{eq:sigmaL}
\end{equation}
for 
$
x =\{ (v_{(k,l)}, e^-_{(k-1,k),l}, e^+_{k,(l,l+1)}) \}_{(k,l) \in \mbbZ2} \in \XL. 
$
Since $v_{(k,l)} \in V_{(k,l)}$ and hence
$v_{(k+1,l+1)} \in V_{(k+1,l+1)}=V_{n(k+1,l+1)}=V_{n(k,l)} =V_{(k,l)},$
and similarly 
$e^-_{(k, k+1),l+1} \in E^-_{(k-1,k),l}, 
 e^+_{k+1,(l+1,l+2)} \in E^+_{k,(l,l+1)}
$ for 
 $e^-_{(k-1, k),l} \in E^-_{(k-1,k),l}, 
 e^+_{k,(l,l+1)} \in E^+_{k,(l,l+1)},
$
the above 
$\sigma_\L: \XL \longrightarrow \XL$
is well-defined.
It is straightforward to see that 
$\sigma_\L: \XL \longrightarrow 
\XL$ is actually a homeomorphism.
One knows that 
\begin{equation*}
x\sim z \quad \text{ if and only if } \quad 
\sigma_\L(x) \sim \sigma_\L(z) 
\end{equation*}
so that 
\begin{equation*}
\sigma_{\L}\times\sigma_{\L}(R_\L) = R_\L.
\end{equation*}
Hence
the homeomorphism $\sigma_\L\times\sigma_\L$
on $R_\L$ induces an automorphism on the $C^*$-algebra
$\FL$, that we denote by 
$\rho_\L$.
We are interested in not only 
the algebraic structure of the algebra $\FL$ 
but also the behavier of the automorphism
$\sigma_\L$ on $\FL$.

In  \cite{MaPre2019b}, we had introduced an AF-algbera written  
$\mathcal{F}_{\frak L}$ with the shift automorphism
$\sigma_{\frak L}$ on it
from a $\lambda$-graph bisystem $\L =(\L^-,\L^+)$
satisfying FPCC.
In the paper  \cite{MaPre2019b}, 
a $\lambda$-graph bisystem is denoted by
${\frak L} =({\frak L}^-,{\frak L}^+)$.
The   $\lambda$-graph bisystems $\L$ and ${\frak L}$ are 
the same objects.
By their construction of the both $C^*$-algebras
$\FL$ and $\mathcal{F}_{\frak L}$, we know they are actually isomorphic 
together with their shift automorphisms.
Namely, we have 
\begin{proposition}\label{prop:isomAF}
The $C^*$-algebra $\FL$ with shift automorphism
$\rho_\L$  is isomorphic to  the AF-algebra
$\mathcal{F}_{\frak L}$ together with the shift automorphism $\sigma_{\frak L}$
introduced in \cite{MaPre2019b}.
\end{proposition}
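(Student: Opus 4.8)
The plan is to realize both AF-algebras as inductive limits of a common system of building blocks and then to check that the two shift automorphisms agree under the resulting identification. Since $\FL = C^*(R_\L)$ and $R_\L = \varinjlim_{(p,q)} R_{\L_{(p,q)}}$, I would first describe each stage $C^*(R_{\L_{(p,q)}})$ explicitly. By Lemma \ref{lem:muvx} a configuration $x$ is uniquely determined by its $(p,q)$-rectangle $\square_{(p,q)}(x)$ together with the word $P_x(v_{(p,q)}) \in P(v_{(p,q)})$, so $x \underset{(p,q)}{\sim} z$ holds exactly when $x$ and $z$ share the same $(p,q)$-rectangle, and the members of one such class are indexed bijectively by $P(v_{(p,q)})$. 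Taking the clopen bisections $V(\mu,\nu;v_{(p,q)})$ introduced in the proof that $R_\L$ is \'etale as matrix units, one obtains
\begin{equation*}
C^*(R_{\L_{(p,q)}}) \cong \bigoplus_{v_{(p,q)}\in V_{(p,q)}} M_{|P(v_{(p,q)})|}\bigl(C(\CZ(v_{(p,q)}))\bigr),
\end{equation*}
where $\CZ(v_{(p,q)})$ is the clopen set of $(p,q)$-rectangles with corner $v_{(p,q)}$, identified with the zigzag paths starting from $v_{(p,q)}$.

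Next I would recall from \cite{MaPre2019b} the description of $\mathcal{F}_{\frak L}$ as an inductive limit attached to the $\lambda$-graph bisystem, in which the building block at position $(p,q)$ (equivalently at level $n(p,q)$, since $V_{(p,q)} = V_{n(p,q)}$) is the same direct sum of matrix algebras over the corresponding path spaces, the matrix size at a vertex $v$ being $|P(v)| = |F(v)|$ by FPCC. The bijection between $\XL$ and the set $\ZL$ of zigzag paths with head symbols, together with Lemma \ref{lem:pqtriangle}, identifies the two families of building blocks block by block.

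The central point, and the one requiring the most care, is to verify that the connecting homomorphisms of the two inductive systems coincide. For $(p,q) \prec (p',q')$ the inclusion $R_{\L_{(p,q)}} \subset R_{\L_{(p',q')}}$ induces an embedding of the corresponding matrix blocks whose multiplicity counts the number of ways a corner vertex $v_{(p,q)}$ together with a word $\mu \in P(v_{(p,q)})$ extends to position $(p',q')$. By the left-resolving property of $\L^+$, the right-resolving property of $\L^-$, and the local property together with FPCC --- exactly the extension mechanism carried out in Lemma \ref{lem:muvx} --- these multiplicities equal the edge-path multiplicities defining the Bratteli diagram of $\mathcal{F}_{\frak L}$ in \cite{MaPre2019b}. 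This is the main obstacle: one must confirm that the geometric extension of a $(p,q)$-rectangle and triangle to position $(p',q')$ corresponds precisely, under FPCC, to concatenation of edges in $\L^-$ and $\L^+$, with no block split or merged differently in the two descriptions. Once the connecting maps match, the two inductive systems are isomorphic and hence $\FL \cong \mathcal{F}_{\frak L}$ as AF-algebras.

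Finally, for the shift automorphisms I would use that $\sigma_\L$ sends the position $(k,l)$ to $(k+1,l+1)$ and therefore preserves the level $n(k,l) = n(k+1,l+1)$. Thus the induced automorphism $\rho_\L$ carries $C^*(R_{\L_{(p,q)}})$ onto $C^*(R_{\L_{(p+1,q+1)}})$ by relabelling the position index while fixing the underlying level $V_{n(p,q)}$, the vertex data, and the words $P(v)$. Since $\sigma_{\frak L}$ in \cite{MaPre2019b} is defined by the same level-preserving shift of the position index, the block-wise identification constructed above intertwines $\rho_\L$ with $\sigma_{\frak L}$, which completes the proof.
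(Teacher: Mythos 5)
The paper offers no proof of this proposition at all: it is stated as immediate from inspecting the two constructions, the entire justification being the preceding sentence ``By their construction of the both $C^*$-algebras $\FL$ and $\mathcal{F}_{\frak L}$, we know they are actually isomorphic together with their shift automorphisms.'' Your proposal therefore does more than the paper, and its skeleton is the natural one. Your first step is correct and fully checkable inside this paper: by Lemma \ref{lem:muvx} each $R_{\L_{(p,q)}}$-class whose rectangle has corner vertex $v_{(p,q)}$ is indexed bijectively by $P(v_{(p,q)})$, the corner-vertex decomposition of $\XL$ into clopen $R_{\L_{(p,q)}}$-invariant pieces is a product (rectangle data)$\times$(word data), and this yields $C^*(R_{\L_{(p,q)}}) \cong \bigoplus_{v_{(p,q)}} M_{|P(v_{(p,q)})|}\bigl(C(\CZ(v_{(p,q)}))\bigr)$ with $\FL$ the limit over the net $(p,q)$. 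The caveat is that your two remaining steps --- that the building blocks and partial embedding multiplicities of $\mathcal{F}_{\frak L}$ in \cite{MaPre2019b} coincide with these, and that $\sigma_{\frak L}$ is the corresponding level-preserving shift of the position index --- are precisely the content of the proposition, and in your write-up they are recalled and asserted rather than verified against the cited construction (which you could not see); so in the end your argument, like the paper's one-sentence justification, rests on an inspection of \cite{MaPre2019b} that is not carried out. What your route buys is that it reduces that inspection to two concrete, finite, checkable items (multiplicity matching along the net, and the identification of the two shifts), which is genuinely more informative than the paper's bare assertion. One immaterial point: with the convention \eqref{eq:sigmaL} one has $(\sigma_\L\times\sigma_\L)(R_{\L_{(p,q)}}) = R_{\L_{(p-1,q-1)}}$, so depending on whether $\rho_\L$ is implemented by $\sigma_\L$ or $\sigma_\L^{-1}$ the block at $(p,q)$ is carried to $(p-1,q-1)$ rather than $(p+1,q+1)$; since $n(p-1,q-1)=n(p,q)$, the level-preservation you actually use holds either way.
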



\section{The groupoid $\GL = R_\L\rtimes_{\sigma_{\L}}\Z$ and its $C^*$-algebra}
Recall that the shift homeomorphism $\sigma_\L$ on $\XL$ is defined in \eqref{eq:sigmaL}.
We define an \'etale groupoid $\GL$
from the equivalence relation $R_\L$ and the homeomorphism $\sigma_\L$ by setting
\begin{equation*}
\mathcal{G}_{\mathcal{L}} =\{
(x,n,z) \in \mathcal{X}_{\mathcal{L}}\times\Z\times\mathcal{X}_{\mathcal{L}}
\mid (\sigma_{\mathcal{L}}^n(x), z) \in R_{\mathcal{L}} \}. 
\end{equation*}
As the correspndence 
\begin{equation*}
(x,n,z) \in \GL \longrightarrow ((\sigma_{\mathcal{L}}^n(x), z), n) \in R_{\mathcal{L}}\times\Z
\end{equation*}
is bijective, a topology on $\GL$ is transfered from the product topology 
of $R_{\mathcal{L}}\times\Z$.
The unit space
$\mathcal{G}_{\mathcal{L}}^{(0)}$ of $\GL$ is defined by
\begin{equation*}
\GL^{(0)} =\{
(x,0,x) \in \mathcal{X}_{\mathcal{L}}\times\Z\times\mathcal{X}_{\mathcal{L}}
 \} 
\end{equation*}
that is identified with $\XL$ with their topology.
Define the maps
$s,r : \GL\longrightarrow \GL^{(0)}$
 by setting 
 $
 s(x,n,z) = (z,0,z), r(x,n,z) = (x,0,x).
$ 
It is routine to see that $\GL$ has a groupoid structure and the above topology on $\GL$
makes  $\GL$ a topological groupoid.
It is also routine to see that 
the groupoid $\GL$ is amenable and \'etale
because the equivalence relation $R_\mathcal{L}$ is so. 

We will define a certain essential freeness of the dynamical system
$(\XL, \sigma_{\mathcal{L}})$ that makes the groupoid
$\GL$ is essentially principal. 
An \'etale groupoid $\mathcal {G}$ is said to be essentially principal
if the interior $\Int(\mathcal{G}')$ of the isotropy bundle
$\mathcal{G}' =\{ g \in \mathcal{G} \mid s(g) = r(g) \}$ 
of $\mathcal{G}$ coincides with its unit space
$\G^{(0)}$ (\cite{Renault}, \cite{Renault2}, \cite{Renault3}).
We say that $(\XL, \sigma_{\mathcal{L}})$ is {\it equivalently essentially free}\/
if for each $n \in \Z$ with $n\ne0$,
the interior 
of the set $\{x \in \XL \mid (\sigma_{\mathcal{L}}^n(x), x) \in R_{\mathcal{L}} \}$
is empty:
$$
\Int(\{x \in \XL \mid (\sigma_{\mathcal{L}}^n(x), x) \in R_{\mathcal{L}} \}) =\emptyset.
$$
We then have
\begin{lemma}\label{lem:essfree}
The dynamical system
 $(\XL, \sigma_{\mathcal{L}})$ is equivalently essentially free
if and only if the groupoid
$\GL$ is essentially principal.
\end{lemma}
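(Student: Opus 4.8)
The plan is to exploit the $\Z$-grading of $\GL$. Since the topology on $\GL$ is pulled back from the product topology on $R_\L\times\Z$ and $\Z$ is discrete, each level $\GL_n:=\{(x,n,z)\in\GL\}$ is clopen, and $(x,n,z)\mapsto(\sigma_\L^n(x),z)$ is a homeomorphism of $\GL_n$ onto $R_\L$. Writing $A_n:=\{x\in\XL\mid(\sigma_\L^n(x),x)\in R_\L\}$, the isotropy bundle is $\mathcal{G}'=\bigsqcup_{n\in\Z}\{(x,n,x)\mid x\in A_n\}$, and because the levels are clopen, $\Int(\mathcal{G}')$ splits as the disjoint union of the interiors computed level by level. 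At level $0$ one has $A_0=\XL$, so $\mathcal{G}'\cap\GL_0=\GL^{(0)}$, which is open by étaleness; hence the condition $\Int(\mathcal{G}')=\GL^{(0)}$ is equivalent to the assertion that for every $n\neq0$ the set $\{(x,n,x)\mid x\in A_n\}$ has empty interior in $\GL_n$. The whole argument therefore reduces to matching this level-$n$ statement with the condition $\Int(A_n)=\emptyset$.

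For the implication ``equivalently essentially free $\Rightarrow$ essentially principal'' I would argue contrapositively. Suppose $\{(x,n,x)\mid x\in A_n\}$ has nonempty interior for some $n\neq0$, and pick a nonempty open $W$ inside it. Since $s:\GL\to\GL^{(0)}$ is a local homeomorphism, after shrinking $W$ I may assume it is a bisection, so that $s(W)$ is open in $\XL$. Every element of $W$ has the form $(x,n,x)$ with $(\sigma_\L^n(x),x)\in R_\L$, and $s(x,n,x)=x$; hence $s(W)$ is a nonempty open subset of $A_n$, forcing $\Int(A_n)\neq\emptyset$, i.e. the system is not equivalently essentially free.

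For the converse I would produce a continuous local section of $s$ over an open piece of $A_n$. Assume $\Int(A_n)\neq\emptyset$ for some $n\neq0$ and set $O:=\Int(A_n)$. Because each $R_{\L_{(p,q)}}$ is closed in $\XL\times\XL$ and $x\mapsto(\sigma_\L^n(x),x)$ is continuous, each $B_{(p,q)}:=\{x\mid(\sigma_\L^n(x),x)\in R_{\L_{(p,q)}}\}$ is closed, and $O=\bigcup_{m\in\N}(B_{(-m,m)}\cap O)$ is an increasing countable union of relatively closed sets. Since $O$ is a nonempty open subset of a compact metric space it is a Baire space, so some $B_{(-m,m)}$ meets $O$ in a set with nonempty interior $O'$. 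On $O'$ the map $\tau(x):=(x,n,x)$ takes values in $\{n\}\times R_{\L_{(-m,m)}}$, where $R_{\L_{(-m,m)}}$ carries the subspace topology and is open in $R_\L$; hence $\tau:O'\to\GL$ is continuous, and clearly $s\circ\tau=\id_{O'}$. A continuous section of a local homeomorphism has open image, so $\tau(O')$ is a nonempty open subset of $\GL$ contained in the isotropy bundle and disjoint from $\GL^{(0)}$ (as $n\neq0$). Thus $\Int(\mathcal{G}')\supsetneq\GL^{(0)}$ and $\GL$ fails to be essentially principal.

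The one genuinely delicate point, and the place I would be most careful, is the continuity of the section $\tau$ in the converse: the naive set $\{x\in O\mid\square_{(p,q)}(\sigma_\L^n(x))=\square_{(p,q)}(x)\}$ is only closed, so $\tau$ need not be continuous on all of $O$. The Baire-category step is exactly what repairs this, by trapping the image in a single $R_{\L_{(-m,m)}}$ on which the equivalence is realized uniformly and the subspace topology governs continuity. The remaining ingredients (the clopen $\Z$-grading, the fact that $R_{\L_{(p,q)}}$ is closed in $\XL\times\XL$ and open in $R_\L$, the étaleness of $s$, and the open-image property of sections of local homeomorphisms) are standard and require only routine verification.
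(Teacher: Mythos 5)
Your proof is correct, and its skeleton---the clopen $\Z$-grading of $\GL$ transferred from $R_\L\times\Z$, and the reduction to the statement that for each $n\ne 0$ the level-$n$ isotropy set has empty interior---is the same as the paper's. The difference lies in what happens after that reduction. The paper's proof stops there: it writes $\Int(\GL')$ as the union over $n$ of the interiors of the sets $\{(x,n,x)\mid(\sigma_{\L}^n(x),x)\in R_\L\}$ and simply declares the resulting condition equivalent to equivalent essential freeness, implicitly identifying ``empty interior in $\GL$'' with ``empty interior of $A_n=\{x\in\XL\mid(\sigma_{\L}^n(x),x)\in R_\L\}$ in $\XL$''. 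That identification is exactly the nontrivial point, because the topology of $\GL$ is pulled back from $R_\L\times\Z$ with $R_\L$ carrying its inductive-limit topology, which is finer than the relative topology from $\XL\times\XL$; the paper's notation $\Int(\{(x,n,x)\in\XL\times\Z\times\XL\mid\cdots\})$ even reads as if the interior were computed in the product space, which is not the topology relevant to essential principality. You supply both halves of the missing equivalence: the easy half via openness of the source map (a local homeomorphism), and the delicate half via the Baire category argument, which upgrades the pointwise condition $(\sigma_{\L}^n(x),x)\in R_\L$ to the uniform condition $(\sigma_{\L}^n(x),x)\in R_{\L_{(-m,m)}}$ on a smaller nonempty open set $O'$, where the subspace topology governs continuity; then $x\mapsto(x,n,x)$ is a genuine continuous section of $s$ over $O'$, and its image is open. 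This is precisely the right repair: without it (or something equivalent) the map $x\mapsto(x,n,x)$ need not be continuous into $\GL$, and the converse direction would have a gap. So your argument buys full rigor at the one point where the paper only asserts, at the modest cost of invoking Baire category together with two facts the paper has already established, namely that each $R_{\L_{(p,q)}}$ is closed in $\XL\times\XL$ and compact open in $R_\L$.
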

\begin{proof}
As we see
\begin{equation*}
\GL' = \bigcup_{n\in \Z} \{(x,n,x) \in \GL \mid (\sigma_{\mathcal{L}}^n(x), x) \in R_\L \},
\end{equation*}
we have 
\begin{equation*}
\Int(\GL') = \bigcup_{n\in \Z} 
\Int(\{(x,n,x) \in \XL\times\Z\times\XL \mid (\sigma_{\mathcal{L}}^n(x), x) \in R_\L \}).
\end{equation*}
Now 
\begin{equation*}
\Int(\{(x,0,x) \in \XL\times\Z\times\XL \mid (x, x) \in R_\L \}) =\XL,
\end{equation*}
so that we have
$\Int(\GL') =\XL$ if and only if 
\begin{equation*}
\Int(\{(x,n,x) \in \XL\times\Z\times\XL \mid (\sigma_{\mathcal{L}}^n(x), x) \in R_\L \})
= \emptyset \quad\text{ for all } n \in \Z, n\ne 0,
\end{equation*}
being equivalent to the condition that 
$(\XL, \sigma_{\mathcal{L}})$ is equivalently essentially free.
\end{proof}

\medskip


Let us review the construction of the $C^*$-algebra from an \'etale groupoid $\G$
(\cite{Renault}). 
It is a generalization of that of $C^*$-algebra from an \'etale equivalence relation.
Let $C_c(\G)$ be the algebra of complex valued compactly supported continuous functions on $\G$
that has a product structure of $*$-algebra defined by
\begin{gather*}
(f*g)(u) = \sum_{r(t) = r(u)} f(t)g(t^{-1}u), \\
f^*(u) =\overline{f(u^{-1})}, \qquad f,g \in C_c(\G), u \in \G. 
\end{gather*}
The $*$-algebra $C_c(\G)$ is a $C_0(\G^{(0)})$-right module with a 
$C_0(\G^{(0)})$-valued inner product given by
\begin{gather*}
(\xi\cdot f)(u) =\xi(u)f(s(u)), \qquad f \in C_0(\G^{(0)}), \\
\langle\xi,\eta\rangle(x) 
=\sum_{x = s(u)}\overline{\xi(u)}\eta(u), \qquad  \xi, \eta \in C_c(\G),  x \in \G^{(0)}.
\end{gather*}
Let us denote by $\ell^2(\G)$ 
the Hilbert $C^*$-right $C_0(\G^{(0)})$-module obtained by the completion
of $C_c(\G)$ by the norm induced by the inner product. 
Let 
$\lambda: C_c(\G)\longrightarrow B(\ell^2(\G))$
be the $*$-homomorphism defined by
$
\lambda(f)\xi = f* \xi
$
for
$
f, \xi \in C_c(\G).
$
The norm closure of $\lambda(C_c(\G))$ 
on $\ell^2(\G)$ is called the reduced groupoid $C^*$-algebra
of the \'etale groupoid
$\G$, that is denoted by  $C^*_{\red}(\G)$.
Similarly to the case of the construction of 
the  \'etale equivalence relation, 
we have the full groupoid $C^*$-algebra $C^*_{\full}(\G)$ of the \'etale groupoid $\G$
by the completion of the universal $C^*$-norm on $C_c(\G)$.
Now apply the construction of the groupoid $C^*$-algebra for our \'etale groupoid 
$\GL$ from a $\lambda$-graph bisystem $\L$.
As our groupoid $\GL$ is amenable, 
the two $C^*$-algebras 
$C^*_{\red}(\GL)$ and $C^*_{\full}(\GL)$ are canonically isomorphic,
that we denote by  $\RL$.
Since the groupoid $\GL$ is a semi-direct product 
$\GL = R_\L\rtimes_{\sigma_\L}\Z$
and the automorphism $\rho_\L$ on $\FL(=C^*(R_\L))$ is naturally defined by the homeomorphism
$\sigma_\L$ on $\XL$,
general theory of groupoid $C^*$-algebras tells us the following lemma. 
\begin{lemma}
The $C^*$-algebra $\R_{\mathcal{L}}$ 
is canonically isomorphic to the crossed product 
$\F_{\mathcal{L}}\rtimes_{\rho_{\mathcal{L}}}\Z.$
\end{lemma}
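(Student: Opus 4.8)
The plan is to realize $\RL = C^*(\GL)$ as the $C^*$-algebra generated by a faithful copy of $\FL$ together with a single unitary implementing $\rho_\L$, and then to identify it with the crossed product through the universal property of the latter. The structural feature that makes this work is the continuous groupoid cocycle $d_\L \colon \GL \to \Z$, $d_\L(x,n,z) = n$, whose fibres $d_\L^{-1}(n)$ are clopen and partition $\GL$, with $d_\L^{-1}(0) = \{(x,0,z) : (x,z)\in R_\L\}$ canonically isomorphic to $R_\L$. This exhibits $\GL$ as the semidirect product $R_\L \rtimes_{\sigma_\L}\Z$ and provides exactly the grading needed to decompose $\RL$ into spectral subspaces.

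Concretely, I would first embed $C_c(R_\L)$ into $C_c(\GL)$ by $\widehat f(x,n,z) = f(x,z)$ for $n=0$ and $0$ otherwise; since $R_\L = d_\L^{-1}(0)$ is an open subgroupoid this extends to a faithful inclusion $\FL \hookrightarrow \RL$ (faithfulness being automatic as full and reduced norms coincide on both algebras by amenability). Next I would take $u \in C_c(\GL)$ to be the characteristic function of the compact open bisection $\{(y,1,\sigma_\L(y)) : y \in \XL\} \subset d_\L^{-1}(1)$. A direct convolution computation using $s(x,n,z)=(z,0,z)$, $r(x,n,z)=(x,0,x)$ and $(x,n,z)^{-1}=(z,-n,x)$ shows $u^*u = uu^* = \mathbf{1}_{\XL}$, so $u$ is unitary, and $u\,\widehat f\, u^* = \widehat{\rho_\L(f)}$ for all $f \in C_c(R_\L)$, i.e. $(\FL,u)$ is a covariant representation of $(\FL,\Z,\rho_\L)$ inside $\RL$. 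Finally, every $F \in C_c(\GL)$ decomposes as a finite sum $F = \sum_n \widehat{f_n}\,u^n$ with $f_n \in C_c(R_\L)$ (restrict $F$ to each clopen fibre $d_\L^{-1}(n)$ and transport it back to $d_\L^{-1}(0)$ by $u^{-n}$), so the algebra generated by $\FL$ and $u$ is dense; the universal property of the full crossed product then yields a surjective $*$-homomorphism $\Psi \colon \FL\rtimes_{\rho_\L}\Z \to \RL$.

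The main obstacle is the injectivity of $\Psi$. Here I would exploit two circle actions: the dual action $\gamma_\L = \hat\rho_\L$ on $\FL\rtimes_{\rho_\L}\Z$, and the gauge action $\beta$ on $\RL$ given by $\beta_t(F)(x,n,z) = t^n F(x,n,z)$ for $t\in\T$, which is well defined precisely because $d_\L$ is a continuous homomorphism. Since $\Psi$ fixes $\FL$ pointwise and sends the canonical implementing unitary to $u$, it intertwines $\gamma_\L$ and $\beta$. Averaging over $\T$ produces faithful conditional expectations $E$ and $E'$ onto the respective fixed-point algebras, both of which equal $\FL$, and $\Psi\circ E = E'\circ\Psi$. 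If $\Psi(a) = 0$, then $\Psi(E(a^*a)) = E'(\Psi(a^*a)) = 0$; as $\Psi$ is injective on $\FL$ this forces $E(a^*a) = 0$, and faithfulness of $E$ gives $a^*a = 0$, hence $a = 0$. Therefore $\Psi$ is an isomorphism. Alternatively, the entire statement specializes the general theory of $C^*$-algebras of semidirect-product groupoids $H\rtimes\Z$, of which the argument above is simply the concrete instance.
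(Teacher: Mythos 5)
Your proof is correct. The paper offers essentially no argument for this lemma: it only remarks that since $\GL = R_\L\rtimes_{\sigma_\L}\Z$ and $\rho_\L$ is induced by the homeomorphism $\sigma_\L$, ``general theory of groupoid $C^*$-algebras'' yields the isomorphism. Your proposal is exactly the detailed instantiation of that general fact, and every step holds up: the cocycle $d_\L$ gives the clopen $\Z$-grading of $\GL$ (under the homeomorphism $\GL\cong R_\L\times\Z$ your bisection $\{(y,1,\sigma_\L(y)):y\in\XL\}$ corresponds to the diagonal of $R_\L$ times $\{1\}$, which is compact open because the unit space of the \'etale relation $R_\L$ is clopen and $\XL$ is compact); the fibrewise restriction argument gives density of the span of the $\widehat{f_n}\,u^n$; and the intertwining of the dual action with the gauge action $\Ad(U_t(d_\L))$ (which is the paper's $\gamma_{\L}$), together with the faithful averaging expectations onto the common fixed-point algebra $\FL$, gives injectivity of $\Psi$ by the standard argument. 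The only point to watch is an orientation convention: the convolution computation yields $u\,\widehat f\,u^* = \widehat{f\circ(\sigma_\L\times\sigma_\L)}$, which is $\rho_\L(f)$ or $\rho_\L^{-1}(f)$ according to how the paper orients the automorphism induced by $\sigma_\L\times\sigma_\L$; in the latter case one simply replaces $u$ by $u^*$, and the conclusion is unaffected.
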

In the construction of the $C^*$-algebra
$C^*_{\red}(\GL)$, a continuous homomorphism
$f:\GL\longrightarrow \Z$ defines a one-parameter unitary group
$U_t(f), t \in \mathbb{T}$ on $\ell^2(\GL)$ by setting
\begin{equation*}
[U_t(f)\xi](x,n,z) = \exp(2\pi\sqrt{-1}f(x,n,z) t) \xi(x,n,z), \qquad
\xi \in \ell^2(\GL), \,\, (x,n,z) \in \GL.
\end{equation*}
It is routine to check that 
$\Ad(U_t(f))(g) \in C_c(\GL)$ for
$g \in C_c(\GL)$.
If in particular we define a groupoid homomorphism 
$d_{\mathcal{L}}: \mathcal{G}_\mathcal{L}\longrightarrow \Z$ 
by 
$$
d_{\mathcal{L}}(x,n,z)=n, \qquad (x,n,z) \in \GL.
$$  
Then the automorphism
$\Ad(U_t(d_\mathcal{L}))$
defined by $d_\mathcal{L}$ is denoted by $\gamma_{\L_t}.$
That is exactly corresponds to the dual action $\hat{\rho}_{\L_t}$
of the crossed product 
$\R_\L = \F_{\mathcal{L}}\rtimes_{\rho_\L}\Z$
by the shift automorphism $\rho_{\mathcal{L}}$ on $\FL$.
Now the groupoid  $\GL$ is \'etale, so that the unit space 
$\GL^{(0)}$ is open and closed in $\GL$. Hence 
$\XL$ is regarded as an open and closed subgroupoid of  $\GL$.
This shows that the commutative $C^*$-subalgebra 
$C(\XL)$ of complex valued continuous functions on $\XL$ is regarded 
as a $C^*$-subalgebra of $\RL$.
By Lemma \ref{lem:essfree}, we have the following lemma.  
\begin{lemma}[cf. {Renault \cite[Proposition 4.7]{Renault}}]
 The dynamical system
$(\XL, \sigma_{\mathcal{L}})$ is equivalently essentially free
if and only if 
the $C^*$-subalgebra $C(\XL)$ is maximal abelian in $\RL$.
\end{lemma}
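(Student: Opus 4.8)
The plan is to deduce the statement by combining Lemma \ref{lem:essfree} with the general criterion of Renault for \'etale groupoid $C^*$-algebras. Recall that $\GL$ is an amenable \'etale groupoid and that $\XL$ is compact and metrizable; since $R_\L=\varinjlim R_{\L_{(p,q)}}$ is Hausdorff and $\GL = R_\L\rtimes_{\sigma_\L}\Z$ is the semi-direct product by the discrete group $\Z$ acting by homeomorphisms, the groupoid $\GL$ is a second countable, locally compact, Hausdorff \'etale groupoid. Consequently $\RL = C^*_{\red}(\GL)$ carries the canonical faithful conditional expectation $E:\RL\longrightarrow C(\XL)$, obtained as the continuous extension of the restriction map $f\mapsto f|_{\GL^{(0)}}$ on $C_c(\GL)$, where $\GL^{(0)}$ is identified with $\XL$. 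By Lemma \ref{lem:essfree}, $(\XL,\sigma_\L)$ is equivalently essentially free if and only if $\GL$ is essentially principal, i.e. $\Int(\GL')=\GL^{(0)}$, so it suffices to show that $\GL$ is essentially principal if and only if $C(\XL)$ is maximal abelian in $\RL$, which is the content of Renault's Proposition 4.7 \cite[Proposition 4.7]{Renault} applied to $\G=\GL$.

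For the direction that essential principality implies maximality, I would argue as follows. Given $a\in\RL$ in the commutant of $C(\XL)$, the element $b=a-E(a)$ again commutes with $C(\XL)$ and satisfies $E(b)=0$. Approximating $b$ in norm by functions $f\in C_c(\GL)$ and using that, for $g\in C_0(\GL^{(0)})$, one has $(f*g)(u)=f(u)g(s(u))$ and $(g*f)(u)=g(r(u))f(u)$, so that $f$ commutes with all such $g$ precisely when $\operatorname{supp}(f)\subseteq\GL'$, the condition $\Int(\GL')=\GL^{(0)}$ leaves no room for a nonzero component of $b$ supported off the unit space. Hence $b=0$ and $a=E(a)\in C(\XL)$, the faithfulness of $E$ being what turns the vanishing of the off-diagonal part into $b=0$.

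For the converse, suppose $(\XL,\sigma_\L)$ is not equivalently essentially free. Then by Lemma \ref{lem:essfree} there exist $n\ne0$ and a nonempty open set $U\subset\{x\in\XL\mid(\sigma_\L^n(x),x)\in R_\L\}$, so that $\GL'$ has nonempty interior outside $\GL^{(0)}$; on an open bisection contained in $\GL'$ one constructs a nonzero element of $C_c(\GL)$ supported off $\GL^{(0)}$ that commutes with all of $C(\XL)$, exhibiting an element of the commutant of $C(\XL)$ not lying in $C(\XL)$ and showing that $C(\XL)$ fails to be maximal abelian. The main obstacle is the forward maximal-abelian argument: one must control the norm approximation of a general $a\in\RL$ by compactly supported functions and show that commutation with $C(\XL)$ genuinely forces support on $\GL'$, which is where second countability of $\XL$, faithfulness of $E$, and Renault's analysis in \cite{Renault} enter. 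A secondary point to verify is that the paper's notion of essential principality coincides with the topological principality used in \cite{Renault}, an equivalence valid because $\GL^{(0)}=\XL$ is a second countable Baire space.
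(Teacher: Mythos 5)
Your proposal follows essentially the same route as the paper: the paper offers no independent argument for this lemma, deducing it directly from Lemma \ref{lem:essfree} (equivalently essentially free $\Leftrightarrow$ $\GL$ essentially principal) combined with the cited criterion of Renault \cite[Proposition 4.7]{Renault}, which is exactly your reduction. Your additional sketch of Renault's argument (the faithful conditional expectation $E$ onto $C(\XL)$, the support-on-isotropy analysis for the forward direction, and the bisection construction inside $\GL'\setminus\GL^{(0)}$ for the converse) is a correct elaboration of what the paper leaves entirely to the citation.
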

Two \'etale equivalence relations $(R_i, X_i), i=1,2$ on comact metric spaces
$X_i, i=1,2 $ are said to be isomorphic if there exists a homeomorphism
$h:X_1\longrightarrow X_2$ such that
$h\times h(R_1) = R_2$ and $h\times h: R_1\longrightarrow R_2$
is a homeomorphism in the topology of the equivalence relations $R_i, i=1,2$
(\cite[Definition 6.10]{PutnamAMS}). 
\begin{lemma}\label{lem:fgPhi}
Let $\mathcal{L}_i, i=1,2$
be $\lambda$-graph bisystems satisfying FPCC. 
Let $f:\G_{\mathcal{L}_1}\longrightarrow \Z$ and 
$g:\G_{\mathcal{L}_2}\longrightarrow \Z$ be continuous groupoid homomorphisms.
Suppose that there exists an isomorphism
$\varphi:\G_{\mathcal{L}_1}\longrightarrow \G_{\mathcal{L}_2}$ of \'etale groupoids
such that  $f = g\circ \varphi$.
Then there exists an isomorphism 
$\Phi:\R_{\mathcal{L}_1}\longrightarrow \R_{\mathcal{L}_2}$ 
of $C^*$-algebras such that 
$$
\Phi(C(\X_{\mathcal{L}_1}))= C(\X_{\L_2})\quad\text{ and }
\quad \Phi\circ\Ad(U_t(f)) = \Ad(U_t(g))\circ\Phi, \quad t\in \T.
$$ 
\end{lemma}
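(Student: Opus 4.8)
The plan is to realize $\Phi$ as the pullback of functions through $\varphi^{-1}$, and then to extract the two extra properties from the action of $\varphi$ on unit spaces and from the relation $f=g\circ\varphi$. First I would define a map $\Phi_0\colon C_c(\G_{\mathcal{L}_1})\longrightarrow C_c(\G_{\mathcal{L}_2})$ between the convolution $*$-algebras by $\Phi_0(h)=h\circ\varphi^{-1}$. Since $\varphi$ is a homeomorphism of \'etale groupoids carrying the range and source maps, the units, and the inversion of $\G_{\mathcal{L}_1}$ to those of $\G_{\mathcal{L}_2}$, the substitution $s=\varphi^{-1}(t)$ in the convolution formula shows at once that $\Phi_0$ respects both the product and the $*$-operation. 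Hence $\Phi_0$ is a $*$-isomorphism of the $*$-algebras $C_c(\G_{\mathcal{L}_i})$, with inverse the pullback through $\varphi$.

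Next I would check the two asserted identities on $C_c$. Because each $\G_{\mathcal{L}_i}$ is \'etale, its unit space $\G_{\mathcal{L}_i}^{(0)}\cong\X_{\mathcal{L}_i}$ is clopen, so $C(\X_{\mathcal{L}_i})$ is exactly the set of functions in $C_c(\G_{\mathcal{L}_i})$ supported on the unit space. As $\varphi$ restricts to a homeomorphism $\G_{\mathcal{L}_1}^{(0)}\to\G_{\mathcal{L}_2}^{(0)}$, the map $\Phi_0$ carries unit-space-supported functions precisely to unit-space-supported functions, giving $\Phi(C(\X_{\mathcal{L}_1}))=C(\X_{\mathcal{L}_2})$ (no completion is needed here, $C(\X_{\mathcal{L}_i})$ being already a $C^*$-algebra). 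For the intertwining, a routine computation using that $f$ is a groupoid homomorphism gives $\Ad(U_t(f))(h)(u)=\exp(2\pi\sqrt{-1}f(u)t)h(u)$, and likewise for $g$. Then for $h\in C_c(\G_{\mathcal{L}_1})$ and $u\in\G_{\mathcal{L}_2}$,
\[
(\Phi_0\circ\Ad(U_t(f)))(h)(u)=\exp(2\pi\sqrt{-1}f(\varphi^{-1}(u))t)h(\varphi^{-1}(u)),
\]
\[
(\Ad(U_t(g))\circ\Phi_0)(h)(u)=\exp(2\pi\sqrt{-1}g(u)t)h(\varphi^{-1}(u)),
\]
and these coincide precisely because $f\circ\varphi^{-1}=g$, which is the hypothesis $f=g\circ\varphi$; thus $\Phi_0\circ\Ad(U_t(f))=\Ad(U_t(g))\circ\Phi_0$ on $C_c$.

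Finally I would promote $\Phi_0$ to an isomorphism $\Phi$ of the groupoid $C^*$-algebras and pass the two identities to the completions. I expect this to be the main point requiring care, and I would handle it through the universal norm: any $*$-representation of $C_c(\G_{\mathcal{L}_2})$ composes with $\Phi_0$ to give one of $C_c(\G_{\mathcal{L}_1})$ and conversely, so by bijectivity of $\Phi_0$ the universal norms correspond and $\Phi_0$ extends to an isomorphism $C^*_{\full}(\G_{\mathcal{L}_1})\cong C^*_{\full}(\G_{\mathcal{L}_2})$. Since both groupoids are amenable (each being the \'etale AF equivalence relation $R_{\mathcal{L}_i}$ crossed with $\Z$), the reduced and full algebras coincide, yielding the sought $\Phi\colon\R_{\mathcal{L}_1}\to\R_{\mathcal{L}_2}$. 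The two properties, already established on the dense subalgebras $C_c(\G_{\mathcal{L}_i})$ and involving only the bounded map $\Phi$ and the strongly continuous unitary groups $U_t(f),U_t(g)$, then extend by density and continuity. In short, the genuine content is the identification $C^*_{\red}=C^*_{\full}$ via amenability; the $*$-homomorphism property, the behaviour on the diagonal, and the intertwining are all routine relabellings once $\varphi$ is used to transport the groupoid structure.
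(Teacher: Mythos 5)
Your proposal is correct, and the isomorphism you construct is in substance the same map as the paper's: the pullback $a\mapsto a\circ\varphi^{-1}$. What differs is how this map is promoted to the $C^*$-completions. The paper implements $\Phi$ spatially: it defines unitaries $V_h$ and $V_{h^{-1}}$ between the Hilbert modules $\ell^2(\G_{\L_1})$ and $\ell^2(\G_{\L_2})$ by composition with $\varphi$ and $\varphi^{-1}$, sets $\Phi=\Ad(V_{h^{-1}})$, and cites \cite[Proposition 5.6]{MaCJM} for $\Phi(C_c(\G_{\L_1}))=C_c(\G_{\L_2})$ and $\Phi(C(\X_{\L_1}))=C(\X_{\L_2})$; the intertwining is then verified by a vector computation in $\ell^2(\G_{\L_2})$, using $f(\gamma^{-1})=g(\varphi(\gamma^{-1}))$ exactly where you use $f\circ\varphi^{-1}=g$. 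Since conjugation by a unitary is isometric for the reduced norm, this yields the isomorphism $\R_{\L_1}\to\R_{\L_2}$ at once, with no detour through the full algebra. Your route instead verifies everything at the level of the $*$-algebras $C_c(\G_{\L_i})$ — including the clean observation that $\Ad(U_t(f))$ acts there as multiplication by the character $\exp(2\pi\sqrt{-1}f(\cdot)t)$, which is exactly the paper's implicit computation — and then extends by matching universal norms and invoking amenability to identify $C^*_{\full}$ with $C^*_{\red}$. Both are valid within the paper's framework (the paper has already asserted amenability of $\G_\L$ and defines $\R_\L$ as the common completion), but note that your closing claim that the ``genuine content'' is $C^*_{\red}=C^*_{\full}$ slightly oversells amenability: the spatial implementation shows the pullback is isometric for the reduced norms directly, so the extension needs no such identification. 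Conversely, your argument is more self-contained, replacing the external citation and the Hilbert-module bookkeeping by elementary density arguments.
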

\begin{proof}
Let us denote by 
$s_i : \G_{\L_i}\longrightarrow \G_{\L_i}^{(0)},\,
  r_i : \G_{\L_i}\longrightarrow \G_{\L_i}^{(0)},i=1,2
$
the source maps and the range maps of the groupoids $\G_{\L_i}, i=1,2$.
For the isomorphism 
$\varphi:\G_{\L_1}\longrightarrow \G_{\L_2}$
of \'etale groupoids,
let us denote by 
$h=\varphi|_{\G_{\L_1}^{(0)}}: \G_{\L_1}^{(0)} \longrightarrow {\G_{\L_2}^{(0)}}$
the restriction of $\varphi$ to its unit space.
It is regarded as a homeomorphism
$h:{\X_{\L_1}}\longrightarrow {\X_{\L_2}}$
from ${\X_{\L_1}}$ to $\X_{\L_2}$ such that 
$s_2 \circ\varphi = h\circ s_1$ and
$r_2 \circ\varphi = h\circ r_1.$
Define unitaries
$V_h: \ell^2(\G_{\L_2})\longrightarrow \ell^2(\G_{\L_1})$ and
$V_{h^{-1}}: \ell^2(\G_{\L_1})\longrightarrow \ell^2(\G_{\L_2})$
by setting
\begin{align}
[V_h\zeta](x,n,z) 
& = \zeta(\varphi(x,n,z)), \qquad \zeta\in \ell^2(\G_{\L_2}), \, (x,n,z) \in \G_{\L_1}, 
\label{eq:Vh}\\
[V_{h^{-1}}\xi](y,m,w) 
& = \xi(\varphi^{-1}(y,m,w)), \qquad \xi\in \ell^2(\G_{\L_1}), \, (y,m,w) \in \G_{\L_2}
\label{eq:Vinvh}
\end{align}
and we set 
$\Phi:= \Ad(V_{h^{-1}})$.
As in the proof of \cite[Proposition 5.6]{MaCJM}, we have 
$\Phi(C_c(\G_{\L_1})) =  C_c(\G_{\L_2})$ and
$\Phi(C(\X_{\L_1})) =  C(\X_{\L_2})$.
Hence
$\Phi(\R_{\L_1}) = \R_{\L_2}.$
It is straightforward to see that the equalities 
for $a \in C_c(\G_{\L_2}), \zeta \in \ell^2(\G_{\L_2})$ and $(y,m,w) \in \G_{\L_2}$
\begin{align*}
& [(\Phi\circ\Ad(U_t(f)))(a)\zeta](y,m,w) \\
= & \sum_{\gamma\in \G_{\L_1}, r(\gamma) = h^{-1}(y)} 
a(\gamma) \exp(-2\pi\sqrt{-1}f(\gamma^{-1}) t) 
      \zeta(\varphi(\gamma^{-1})\cdot (y,m,w)), \\
& [(\Ad(U_t(g)) \circ\Phi)(a)\zeta](y,m,w) \\
= & \sum_{
\gamma\in \G_{\L_1}, r(\gamma) = h^{-1}(y)} 
a(\gamma) \exp(-2\pi\sqrt{-1}g(\varphi(\gamma^{-1})) t) 
      \zeta(\varphi(\gamma^{-1})\cdot (y,m,w))
\end{align*}
hold. As
$f(\gamma^{-1}) = g(\varphi(\gamma^{-1}))$, we have 
$\Phi\circ\Ad(U_t(f))(a) = \Ad(U_t(g))\circ\Phi(a)$
and hence $\Phi\circ\Ad(U_t(f)) =\Ad(U_t(g))\circ\Phi$.
\end{proof}
We have the following proposition.
\begin{proposition}
Let $\mathcal{L}_i, i=1,2$
be $\lambda$-graph bisystems
satisfying FPCC. 
Assume that
the dynamical systems
$(\X_{\L_i}, \sigma_{\L_i}), i=1,2$ are equivalently essentially free.
Then the following assertions are equivalent.
\begin{enumerate}
\renewcommand{\theenumi}{\roman{enumi}}
\renewcommand{\labelenumi}{\textup{(\theenumi)}}
\item The \'etale groupoids  $\G_{{\mathcal{L}_1}}$ and $\G_{{\mathcal{L}_2}}$ are isomorphic.
\item 
There exists an isomorphism 
$\Phi:\R_{\mathcal{L}_1}\longrightarrow \R_{\mathcal{L}_2}$ 
of $C^*$-algebras such that 
$
\Phi(C(\X_{\mathcal{L}_1}))= C(\X_{\L_2})
$ 
and 
$$
\Phi\circ\gamma_{{\L_1}_t} = \Ad(U_t(c_{\L_2}))\circ\Phi,\qquad
\Phi\circ\Ad(U_t(c_{\L_1})) = \gamma_{{\L_2}_t} \circ\Phi,
 \quad t\in \T
$$ 
for some continuous homomorphisms
$c_{\L_1}: \G_{\L_1}\longrightarrow \Z$ and
$c_{\L_2}: \G_{\L_2}\longrightarrow \Z$.
\end{enumerate}
\end{proposition}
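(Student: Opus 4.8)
The plan is to prove the two implications separately: (i) $\Rightarrow$ (ii) follows directly from Lemma \ref{lem:fgPhi} by a double application, while (ii) $\Rightarrow$ (i) rests on Renault's reconstruction theorem for essentially principal \'etale groupoids. For the first implication, suppose $\varphi:\G_{\L_1}\to\G_{\L_2}$ is an isomorphism of \'etale groupoids. Recall the canonical cocycles $d_{\L_i}:\G_{\L_i}\to\Z$, $d_{\L_i}(x,n,z)=n$, for which $\gamma_{{\L_i}_t}=\Ad(U_t(d_{\L_i}))$. I would set $c_{\L_1}:=d_{\L_2}\circ\varphi:\G_{\L_1}\to\Z$ and $c_{\L_2}:=d_{\L_1}\circ\varphi^{-1}:\G_{\L_2}\to\Z$, both continuous groupoid homomorphisms. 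Applying Lemma \ref{lem:fgPhi} with $f=d_{\L_1}$ and $g=c_{\L_2}$, for which $g\circ\varphi=d_{\L_1}\circ\varphi^{-1}\circ\varphi=d_{\L_1}=f$, produces an isomorphism $\Phi$ with $\Phi(C(\X_{\L_1}))=C(\X_{\L_2})$ and $\Phi\circ\Ad(U_t(d_{\L_1}))=\Ad(U_t(c_{\L_2}))\circ\Phi$, that is $\Phi\circ\gamma_{{\L_1}_t}=\Ad(U_t(c_{\L_2}))\circ\Phi$. A second application of the same lemma, now with $f=c_{\L_1}$ and $g=d_{\L_2}$ so that $g\circ\varphi=d_{\L_2}\circ\varphi=c_{\L_1}=f$, yields $\Phi\circ\Ad(U_t(c_{\L_1}))=\Ad(U_t(d_{\L_2}))\circ\Phi=\gamma_{{\L_2}_t}\circ\Phi$. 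Since the $\Phi$ constructed in Lemma \ref{lem:fgPhi} depends only on $\varphi$, the two applications furnish a single $\Phi$ satisfying both intertwining relations, which is exactly assertion (ii).

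For (ii) $\Rightarrow$ (i), the essential input is the dynamical hypothesis. By Lemma \ref{lem:essfree}, equivalent essential freeness of $(\X_{\L_i},\sigma_{\L_i})$ makes each groupoid $\G_{\L_i}$ essentially principal, and by the preceding lemma it makes $C(\X_{\L_i})$ maximal abelian in $\R_{\L_i}$. Since each $\G_{\L_i}$ is amenable, Hausdorff, second countable and \'etale, the pair $(\R_{\L_i},C(\X_{\L_i}))$ realizes $C(\X_{\L_i})$ as the canonical diagonal (Cartan) subalgebra of the groupoid $C^*$-algebra. I would then invoke Renault's reconstruction theorem (cf. \cite{Renault2}, \cite{Renault3}): an isomorphism $\Phi:\R_{\L_1}\to\R_{\L_2}$ carrying $C(\X_{\L_1})$ onto $C(\X_{\L_2})$ is necessarily induced by an isomorphism $\varphi:\G_{\L_1}\to\G_{\L_2}$ of the underlying \'etale groupoids, obtained as the homeomorphism between the Weyl groupoids of the two Cartan pairs. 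I emphasize that the diagonal-preserving property of $\Phi$ alone suffices to recover the groupoid isomorphism; the gauge-type relations in (ii) involving the cocycles $c_{\L_1},c_{\L_2}$ play no role here, being precisely the extra data produced automatically in the converse direction.

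The main obstacle is the correct application of the reconstruction theorem, namely the verification that essential principality in the sense of Lemma \ref{lem:essfree} (interior of the isotropy bundle equal to the unit space) coincides with the effectiveness, equivalently topological principality, hypothesis under which $C(\X_{\L_i})$ is genuinely a Cartan subalgebra and the Weyl groupoid of $(\R_{\L_i},C(\X_{\L_i}))$ is canonically identified with $\G_{\L_i}$. For Hausdorff, second countable \'etale groupoids these notions agree, so once this identification is recorded the induced homeomorphism of Weyl groupoids is the desired $\varphi$. By contrast, the computational content of the forward implication—the two invocations of Lemma \ref{lem:fgPhi}—is entirely routine, so the substance of the argument lies in setting up and citing the reconstruction theorem accurately.
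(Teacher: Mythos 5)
Your proof is correct and follows essentially the same route as the paper: for (i) $\Rightarrow$ (ii) the paper likewise defines $c_{\L_1}=d_{\L_2}\circ\varphi$ and $c_{\L_2}=d_{\L_1}\circ\varphi^{-1}$ from the components of $\varphi$ and invokes Lemma \ref{lem:fgPhi} (your explicit remark that $\Phi=\Ad(V_{h^{-1}})$ depends only on $\varphi$, so the two applications yield one and the same $\Phi$, is a useful clarification the paper leaves implicit), and for (ii) $\Rightarrow$ (i) the paper, exactly as you do, uses Lemma \ref{lem:essfree} to get essential principality and then cites Renault's reconstruction result for diagonal-preserving isomorphisms, with the gauge-type conditions playing no role in that direction. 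Your framing of the reconstruction step via Cartan pairs and Weyl groupoids is just a more detailed account of the same theorem the paper cites, not a different argument.
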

\begin{proof}
(i) $\Longrightarrow$ (ii):
Suppose that 
there exists an isomorphism
$\varphi:\G_{{\mathcal{L}_1}} \longrightarrow \G_{{\mathcal{L}_2}}$ 
of \'etale groupoids.
Let 
$h:=\varphi|_{\G_{{\mathcal{L}_1}}^{(0)}}: 
\G_{{\mathcal{L}_1}}^{(0)}\longrightarrow\G_{{\mathcal{L}_2}}^{(0)}
$ 
be the restriction of $\varphi$ to its unit space $\G_{{\mathcal{L}_1}}^{(0)}$.
Hence there exist continuous groupoid homomorphisms
$c_{\L_1}:\G_{{\mathcal{L}_1}} \longrightarrow \Z$
and
$c_{\L_2}:\G_{{\mathcal{L}_2}} \longrightarrow \Z$
such that 
\begin{gather*}
\varphi(x,n,z) = (h(x), c_{\L_1}(x,n,z), h(z)), \qquad (x,n,z) \in \G_{\L_1}, \\
\varphi^{-1}(y,m,w) = (h^{-1}(y), c_{\L_2}(y,m,w), h^{-1}(w)), \qquad (y,m,w) \in \G_{\L_2}
\end{gather*}
so that 
$d_{\L_2}\circ \varphi = c_{\L_1}$ and $d_{\L_1}\circ \varphi^{-1} = c_{\L_2},
$
and hence $c_{\L_2}\circ \varphi = d_{\L_1}$.
By Lemma \ref{lem:fgPhi}, we obtain
$
\Phi\circ\gamma_{\L_1,t} = \Ad(U_t(c_{\L_2}))\circ\Phi
$ and
$
\Phi\circ\Ad(U_t(c_{\L_1})) = \gamma_{\L_2,t} \circ\Phi,
$ proving (ii).

(ii) $\Longrightarrow$ (i):
Since the \'etale groupoids $\G_{{\mathcal{L}_1}}$ and $\G_{{\mathcal{L}_2}}$
are both essentially principal,
Renault's result \cite[Proposition 4.11]{Renault} tells us that 
an isomorphism
$\Phi: C^*_{\red}(\G_{\L_1}) (=\R_{\mathcal{L}_1})
\longrightarrow C^*_{\red}(\G_{\L_2}) (=\R_{\mathcal{L}_2})$ 
of $C^*$-algebras satisfying 
$
\Phi(C(\G_{\L_1}^{(0)}))= C(\G_{\L_2}^{(0)})
$ 
(that is, 
$
\Phi(C(\X_{\mathcal{L}_1}))= C(\X_{\L_2})
$) 
yields an isomorphism
of the underlying \'etale groupoids
between $\G_{{\mathcal{L}_1}}$ and $\G_{{\mathcal{L}_2}}$.
\end{proof}

The AF-algebra $\FL$ is canonically isomorphic to the fixed point algebra 
$(\RL)^{\gamma_\L}$ of $\RL$ under the dual action $\gamma_\L$.
We regard the AF-algebra $\FL$ as the subalgebra $(\RL)^{\gamma_\L}$ of $\RL$. 
The following lemma is also due to J. Renault's result \cite[Proposition 30]{Renault}. 
\begin{lemma}\label{lem:groupoidC*6.2}
Assume that
the dynamical systems
$(\X_{\L_i}, \sigma_{\L_i}), i=1,2$ are equivalently essentially free.
Then the following assertions are equivalent.
\begin{enumerate}
\renewcommand{\theenumi}{\roman{enumi}}
\renewcommand{\labelenumi}{\textup{(\theenumi)}}
\item
There exists an isomorphism
$\varphi: \G_{\L_1} \longrightarrow \G_{\L_2}$  
of \'etale groupoids such that 
$\varphi(R_{\L_1})= R_{\L_2}$  
and
$\varphi(\X_{\L_1})= \X_{\L_2}.$  
\item
There exists an isomorphism 
$\Phi:\R_{\mathcal{L}_1}\longrightarrow \R_{\mathcal{L}_2}$ 
of $C^*$-algebras satisfying 
$\Phi(\F_{\L_1}) = \F_{\L_2}$ and
$\Phi(C(\X_{\L_1}))= C(\X_{\L_2}).$ 
 \end{enumerate}
\end{lemma}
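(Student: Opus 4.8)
The plan is to deduce this lemma from Renault's reconstruction theory for Cartan pairs, treating $C(\X_{\L_i})\subset\R_{\L_i}$ as the distinguished abelian (Cartan) subalgebra and $\F_{\L_i}=C^*(R_{\L_i})$ as the intermediate subalgebra attached to the clopen subgroupoid $R_{\L_i}=d_{\L_i}^{-1}(0)\subseteq\G_{\L_i}$. Note that $R_{\L_i}$ really is a clopen subgroupoid containing the unit space: under the homeomorphism $(x,n,z)\mapsto((\sigma_{\L_i}^n(x),z),n)$ onto $R_{\L_i}\times\Z$ it corresponds to the clopen slice $n=0$. By Lemma \ref{lem:essfree} the hypothesis that $(\X_{\L_i},\sigma_{\L_i})$ is equivalently essentially free makes both $\G_{\L_i}$ essentially principal, which is exactly the standing assumption needed to invoke Renault's theorem.

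For the implication (i)$\Rightarrow$(ii), I would produce $\Phi$ from $\varphi$ exactly as in Lemma \ref{lem:fgPhi}: writing $h=\varphi|_{\G_{\L_1}^{(0)}}$ and letting $V_{h^{-1}}$ be the induced unitary on the Hilbert modules, set $\Phi=\Ad(V_{h^{-1}})$, which on $C_c$ is the functorial map $f\mapsto f\circ\varphi^{-1}$. That lemma already gives $\Phi(C(\X_{\L_1}))=C(\X_{\L_2})$ from $\varphi(\X_{\L_1})=\X_{\L_2}$. The new point is that $\varphi(R_{\L_1})=R_{\L_2}$ forces $f\mapsto f\circ\varphi^{-1}$ to carry $C_c(R_{\L_1})$ onto $C_c(R_{\L_2})$; passing to norm closures inside $\R_{\L_1}$ and $\R_{\L_2}$ then yields $\Phi(\F_{\L_1})=\F_{\L_2}$.

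For (ii)$\Rightarrow$(i), since both $\G_{\L_i}$ are essentially principal and $\Phi$ preserves the Cartan subalgebras $C(\X_{\L_i})$, Renault's reconstruction result \cite[Proposition 4.11]{Renault} supplies an \'etale groupoid isomorphism $\varphi:\G_{\L_1}\longrightarrow\G_{\L_2}$ that induces $\Phi$ and restricts to a homeomorphism of unit spaces, so $\varphi(\X_{\L_1})=\X_{\L_2}$ is automatic. The remaining assertion $\varphi(R_{\L_1})=R_{\L_2}$ I would extract from the normalizer picture underlying the reconstruction: each normalizer $n$ of $C(\X_{\L_i})$ in $\R_{\L_i}$ has an open support bisection in $\G_{\L_i}$, the isomorphism $\varphi$ is built by gluing these supports, and $\F_{\L_i}$ is precisely the closed linear span of those normalizers whose support lies in $R_{\L_i}$. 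Because $\Phi$ sends normalizers to normalizers compatibly with supports and satisfies $\Phi(\F_{\L_1})=\F_{\L_2}$, it must carry the bisections contained in $R_{\L_1}$ to bisections contained in $R_{\L_2}$; as these bisections cover the respective subgroupoids, $\varphi(R_{\L_1})=R_{\L_2}$ follows.

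The hard part will be this last identification, namely verifying that $\F_{\L_i}$ is intrinsically the closed span of exactly those normalizers of $C(\X_{\L_i})$ supported in $R_{\L_i}$, so that the groupoid isomorphism is forced to transport the subgroupoid. This is an instance of Renault's correspondence between open subgroupoids $H$ with $\G^{(0)}\subseteq H$ and intermediate subalgebras $C_0(\G^{(0)})\subseteq C^*_{\red}(H)\subseteq C^*_{\red}(\G)$; one must check that the clopen subgroupoid $R_{\L_i}$ and its reduced algebra $\F_{\L_i}$ form such a matched pair, which is where the clopenness of $R_{\L_i}=d_{\L_i}^{-1}(0)$ and its containment of the unit space are used.
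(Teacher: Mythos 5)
Your proposal is correct and takes essentially the same route as the paper: (i)$\Rightarrow$(ii) by the unitary $\Ad(V_{h^{-1}})$ construction of Lemma \ref{lem:fgPhi} carrying $C_c(R_{\L_1})$ onto $C_c(R_{\L_2})$, and (ii)$\Rightarrow$(i) by Renault's reconstruction \cite[Proposition 4.11]{Renault2} together with the observation that, since $\F_{\L_i}=C^*(R_{\L_i})$ and the groupoid isomorphism is assembled from normalizers of $C(\X_{\L_i})$ and their support bisections, the condition $\Phi(\F_{\L_1})=\F_{\L_2}$ forces $\varphi(R_{\L_1})=R_{\L_2}$. The paper's own proof is terser --- it calls (i)$\Rightarrow$(ii) ``direct'' and merely asserts the subgroupoid matching from ``the construction of $\varphi$'' --- so the normalizer-support argument you flag as the hard part is precisely the filling-in of what the paper leaves implicit.
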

\begin{proof}
Since 
the dynamical systems
$(\X_{\L_i}, \sigma_{\L_i}), i=1,2$ are equivalently essentially free,
the \'etale groupoids 
$\G_{\L_1}$ and $\G_{\L_2}$
are both essentially principal by Lemma \ref{lem:essfree}.
The implication
(i) $\Longrightarrow$ (ii) 
is direct.

By Renault \cite[Proposition 4.11]{Renault2},
an isomorphism 
$\R_{\L_1} \longrightarrow \R_{\L_2}$
of $C^*$-algebras such that 
$\Phi(C(\X_{\L_1})) = C(\X_{\L_2})$
yields an isomorphism $\varphi$ between the underlying
\'etale groupoids
$\G_{\L_1}$ and $\G_{\L_2}$.
Now the AF-algebras $\F_{\L_i}$ is isomorphic to
the groupoid algebra $C^*(R_{\L_i}), i=1,2.$
Its construction of the isomorphism $\varphi$
of the \'etale groupoids
tells us  that
$
\varphi(R_{\L_1})=R_{\L_2}
$  
by the additional condition
$\Phi(\F_{\L_1}) = \F_{\L_2}$,
thus proving the implication (ii) $\Longrightarrow$ (i).
\end{proof}


\begin{proposition}\label{prop:main6.1}
Assume that
the dynamical systems
$(\X_{\L_i}, \sigma_{\L_i}), i=1,2$ are equivalently essentially free.
Suppose that 
there exists an isomorphism 
$\Phi: \R_{\mathcal{L}_1} \longrightarrow \R_{\mathcal{L}_2}
$ of $C^*$-algebras such that
$\Phi(C({\mathcal{X}}_{\mathcal{L}_1})) =C({\mathcal{X}}_{\mathcal{L}_2})$
and
$\Phi\circ\gamma_{{\L_1}_t}=\gamma_{{\L_2}_t}\circ\Phi$,
$t \in \T$.
Then there
exists an isomorphism 
$\varphi:\mathcal{G}_{\mathcal{L}_1}\longrightarrow  \mathcal{G}_{\mathcal{L}_2}$
of the \'etale groupoids such that
$d_{\mathcal{L}_2} =\varphi\circ d_{\mathcal{L}_1}$.
\end{proposition}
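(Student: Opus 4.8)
The plan is to obtain the groupoid isomorphism from Renault's reconstruction theorem and then to extract the cocycle compatibility directly from the intertwining of the gauge actions.

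First I would record that, by Lemma \ref{lem:essfree}, the hypothesis that $(\X_{\L_i},\sigma_{\L_i})$ is equivalently essentially free is equivalent to $\G_{\L_i}$ being essentially principal for $i=1,2$. Since both groupoids are second countable, Hausdorff, \'etale and essentially principal, and since the given isomorphism $\Phi:\R_{\L_1}\longrightarrow\R_{\L_2}$ satisfies $\Phi(C(\X_{\L_1}))=C(\X_{\L_2})$, Renault's reconstruction result (as quoted in Lemma \ref{lem:groupoidC*6.2}, cf. \cite[Proposition 4.11]{Renault2}) produces an isomorphism $\varphi:\G_{\L_1}\longrightarrow\G_{\L_2}$ of \'etale groupoids whose restriction to the unit spaces is the homeomorphism $h:\X_{\L_1}\longrightarrow\X_{\L_2}$ induced by $\Phi|_{C(\X_{\L_1})}$. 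Crucially, the theorem guarantees not merely the existence of $\varphi$, but that $\Phi$ is the $*$-isomorphism spatially implemented by $\varphi$; concretely, as in the proof of Lemma \ref{lem:fgPhi}, we may realize $\Phi=\Ad(V_{h^{-1}})$, so that on compactly supported functions
\begin{equation*}
[\Phi(a)](y,m,w) = a(\varphi^{-1}(y,m,w)), \qquad a\in C_c(\G_{\L_1}),\ (y,m,w)\in\G_{\L_2}.
\end{equation*}

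It then remains to verify the required intertwining of the degree maps, $d_{\L_2}\circ\varphi = d_{\L_1}$, and this is precisely where the hypothesis $\Phi\circ\gamma_{{\L_1}_t}=\gamma_{{\L_2}_t}\circ\Phi$ enters. Recalling that $\gamma_{{\L_i}_t}=\Ad(U_t(d_{\L_i}))$ acts on $C_c(\G_{\L_i})$ by $[\gamma_{{\L_i}_t}(a)](g)=\exp(2\pi\sqrt{-1}\,d_{\L_i}(g)\,t)\,a(g)$, I would fix $a\in C_c(\G_{\L_1})$ and a point $g=(y,m,w)\in\G_{\L_2}$ and evaluate both sides of the intertwining relation at $g$:
\begin{align*}
[\Phi(\gamma_{{\L_1}_t}(a))](g) &= \exp(2\pi\sqrt{-1}\,d_{\L_1}(\varphi^{-1}(g))\,t)\,a(\varphi^{-1}(g)), \\
[\gamma_{{\L_2}_t}(\Phi(a))](g) &= \exp(2\pi\sqrt{-1}\,d_{\L_2}(g)\,t)\,a(\varphi^{-1}(g)).
\end{align*}
Since these agree for all $t\in\T$, choosing $a$ with $a(\varphi^{-1}(g))\ne0$ (possible because $C_c(\G_{\L_1})$ separates points) forces the two exponentials to coincide for every $t$, whence the equality of integers $d_{\L_1}(\varphi^{-1}(g))=d_{\L_2}(g)$. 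As $g\in\G_{\L_2}$ is arbitrary, this yields $d_{\L_1}\circ\varphi^{-1}=d_{\L_2}$, that is, the asserted compatibility $d_{\L_2}\circ\varphi=d_{\L_1}$.

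I expect the genuine obstacle to lie in the first paragraph rather than in this computation: one must invoke Renault's reconstruction theorem in the sharp form that identifies the abstract isomorphism $\Phi$ with the map $a\mapsto a\circ\varphi^{-1}$ induced by $\varphi$. It is exactly this identification that legitimizes evaluating $\Phi(a)$ pointwise at elements of $\G_{\L_2}$; without it the gauge calculation would be meaningless, since $\Phi$ is a priori only an isomorphism of $C^*$-algebras. Once $\Phi$ is realized spatially through $\varphi$, the passage from the gauge intertwining to the cocycle identity is the short, essentially routine verification displayed above.
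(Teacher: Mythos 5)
Your argument breaks down at the central claim of your first paragraph: that Renault's reconstruction theorem identifies the given abstract isomorphism $\Phi$ with the spatially implemented one, $\Phi=\Ad(V_{h^{-1}})$, i.e.\ $[\Phi(a)](y,m,w)=a(\varphi^{-1}(y,m,w))$ for $a\in C_c(\G_{\L_1})$. Renault's result (\cite[Proposition 4.11]{Renault2}, as invoked in Lemma \ref{lem:groupoidC*6.2}) gives strictly less: from $\Phi(C(\X_{\L_1}))=C(\X_{\L_2})$ one obtains an isomorphism $\varphi:\G_{\L_1}\longrightarrow\G_{\L_2}$ of the underlying \'etale groupoids and the identity $\Phi(f)=f\circ h^{-1}$ \emph{only for $f$ in the diagonal} $C(\X_{\L_1})$; it does not say that $\Phi$ itself is composition with $\varphi^{-1}$. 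Indeed that is false in general: take $\L_1=\L_2=\L$ and $\Phi=\gamma_{\L_s}$ for a fixed $s$ with $\exp(2\pi\sqrt{-1}s)$ non-real. This $\Phi$ satisfies all hypotheses of the proposition (it fixes the diagonal pointwise and commutes with the gauge action), yet $\gamma_{\L_s}(a)(g)=\exp(2\pi\sqrt{-1}\,d_{\L}(g)s)\,a(g)$ cannot equal $a\circ\varphi^{-1}$ for any groupoid automorphism $\varphi$: choosing $a\ge 0$ supported in an open bisection on which the locally constant map $d_{\L}$ is identically $1$, the function $a\circ\varphi^{-1}$ is nonnegative while $\exp(2\pi\sqrt{-1}s)\,a$ is not real-valued. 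In Renault's sharp (Weyl twist) form, a diagonal-preserving isomorphism is implemented by $\varphi$ only up to a continuous $\T$-valued multiplicative cocycle on the groupoid, so your pointwise evaluation of $\Phi(a)$ is not legitimate as stated. (Had you invoked that twisted form, your gauge computation would actually survive, because the $\T$-valued factor appears on both sides and cancels; but that is a different, stronger theorem than the one you cite.)

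This missing ingredient is precisely what the paper's proof supplies, and it does so without the twisted reconstruction theorem. Writing $\varphi(x,n,z)=(h(x),c_{\L_1}(x,n,z),h(z))$, the paper introduces the spatial isomorphism $\Phi_h=\Ad(V_{h^{-1}})$ as a \emph{second} map, a priori different from $\Phi$ and agreeing with it only on $C(\X_{\L_1})$. Comparing the two on the implementing unitary $U_{\L_1}$ of the crossed product, and using that equivalent essential freeness of $(\X_{\L_1},\sigma_{\L_1})$ makes $C(\X_{\L_1})$ maximal abelian in $\R_{\L_1}$, one finds a unitary $u_1\in C(\X_{\L_1})$ with $\Phi(U_{\L_1})=\Phi_h(U_{\L_1}u_1)$. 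Feeding this into the two intertwining relations ($\Phi$ intertwines $\gamma_{{\L_1}_t}$ with $\gamma_{{\L_2}_t}$, while $\Phi_h$ intertwines $\Ad(U_t(c_{\L_1}))$ with $\gamma_{{\L_2}_t}$) yields $\exp(2\pi\sqrt{-1}t)U_{\L_1}=U_t(c_{\L_1})U_{\L_1}U_t(c_{\L_1})^*$, hence the cocycle identity $c_{\L_1}(x,n,z)-c_{\L_1}(\sigma_{\L_1}(x),n-1,z)=1$; combined with the vanishing of $c_{\L_1}$ on $R_{\L_1}$ this forces $c_{\L_1}=d_{\L_1}$, i.e.\ $d_{\L_2}\circ\varphi=d_{\L_1}$. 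Note that essential freeness enters this argument a second time (for maximal abelianness), not merely to legitimize Renault's theorem; that your proof uses it only once is a symptom that the hard step has been bypassed rather than solved.
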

\begin{proof}
Suppose that 
there exists an isomorphism 
$\Phi: \R_{\mathcal{L}_1} \longrightarrow \R_{\mathcal{L}_2}$
of $C^*$-algebras such that 
$\Phi(C({\mathcal{X}}_{\mathcal{L}_1})) =C({\mathcal{X}}_{\mathcal{L}_2})$
and
$\Phi\circ\gamma_{{\L_1}_t}=\gamma_{{\L_2}_t}\circ\Phi$,
$t \in \T$.
Since 
the fixed point algebra
$(\R_{\L_i})^{\gamma_{\L_i}}$ of
 ${\R_{\mathcal{L}_i}}$ under $\gamma_{\L_i}$ 
is canonically isomorphic to the AF algebra
$\F_{\L_i}$,
 the isomorphism 
$\Phi: \R_{\mathcal{L}_1} \longrightarrow \R_{\mathcal{L}_2}$
satisfies
$\Phi(\F_{\L_1}) = \F_{\L_2}$.
By Lemma \ref{lem:groupoidC*6.2},
we then find  
an isomorphism
$\varphi: \G_{\L_1} \longrightarrow \G_{\L_2}$
of \'etale groupoids  and 
a homeomorphism 
$h: \X_{\L_1} \longrightarrow \X_{\L_2}$
such that  
$\varphi(R_{\L_1}) =R_{\L_2}, \, 
\varphi|_{\X_{\L_1}} = h$
and
$\Phi(f) = f\circ h^{-1}$ for $f \in C(\X_{\L_1})$.
For the isomorphism
 $\varphi:\G_{\L_1} \longrightarrow \G_{\L_2}$ 
of \'etale groupoids,
there exists a continuous groupoid homomorphism
$c_{\L_1}:\G_{{\mathcal{L}_1}} \longrightarrow \Z$
and
$c_{\L_2}:\G_{{\mathcal{L}_2}} \longrightarrow \Z$
such that 
\begin{gather*}
\varphi(x,n,z) = (h(x), c_{\L_1}(x,n,z), h(z)), \qquad (x,n,z) \in \G_{\L_1}, \\
\varphi^{-1}(y,m,w) = (h^{-1}(y), c_{\L_2}(y,m,w), h^{-1}(w)), \qquad (y,m,w) \in \G_{\L_2}.
\end{gather*}
We put
$c_1^n(x) = c_{\L_1}(x,n,\sigma_{\L_1}^n(x))$ for $ x \in \X_{\L_1}, n \in \Z$
and
$d_1(x, z) = c_{\L_1}(x,0, z)$ for $(x,z ) \in R_{\L_1}$.
By the identities
\begin{gather*}
(x,n,z) = (x,n, \sigma_{\L_1}^n(x))\cdot (\sigma_{\L_1}^n(x), 0,z),
\qquad (x,n,z) \in \G_{\L_1}, \\
(x,n+m, \sigma_{\L_1}^{n+m}(x)) 
= (x,n,\sigma_{\L_1}^n(x))\cdot (\sigma_{\L_1}^n(x), m, \sigma_{\L_1}^{n+m}(x)),
\qquad x\in \X_{\L_1}, \, n,m\in\Z,
\end{gather*}
we have 
\begin{gather} 
c_{\L_1}(x,n,z) = c_1^n(x) + d_1(\sigma_{\L_1}^n(x),z), \qquad (x,n,z) \in \G_{\L_1}, \label{eq:cl1}\\
c_1^{n+m}(x) = c_1^n(x) + c_1^m(\sigma_{\L_1}^n(x)), \qquad x \in \X_{\L_1},
\, n,m\in\Z. \label{eq:c1nm}
\end{gather}
For $(x,z) \in R_{\L_1}$, we have
\begin{equation*}
\varphi(x,0,z)
= (h(x),c_{\L_1}(x,0,z),h(z))
= (h(x), d_1(x,z),h(z)).
\end{equation*}
As 
$\varphi(x,0,z)\in R_{\L_2}$,
one knows that $d_1(x,z)=0$
for $(x,z) \in R_1$.
To prove $d_{\L_2} \circ\varphi =d_{\L_1}$,
it suffices to show the equality
$c_1^n(x) = n$ for all $x \in \X_{\L_1}$ and $n \in \mathbb{Z}$.
We put $c_1(x)= c_1^1(x), x \in \X_{\L_1}$,  and will prove that
$c_1(x) =1$ for all $x \in \X_{\L_1}$.   

Let
$V_h$ and $V_{h^{-1}}$ be the unitaries defined in \eqref{eq:Vh} and \eqref{eq:Vinvh}.
As in the proof of Lemma \ref{lem:fgPhi},
by putting
$\Phi_h = \Ad(V_{h^{-1}})$,
we have
an isomorphism
$\Phi_h: \R_{\L_1} \longrightarrow \R_{\L_2}$
of $C^*$-algebras such that 
$\Phi_h(C({\mathcal{X}}_{\mathcal{L}_1})) =C({\mathcal{X}}_{\mathcal{L}_2})$,
more exactly $\Phi_h(f) = f \circ h^{-1}$ for $f \in C(\X_{\L_1})$,
and
$$
\Phi_h\circ \gamma_{{\L_1}_t} = \Ad(U_t(c_{\L_2}))\circ \Phi_h,
\qquad
\Phi_h\circ\Ad(U_t(c_{\L_1})) =\gamma_{{\L_2}_t}\circ \Phi_h,
\qquad t \in \mathbb{T}.
$$
Let $U_{\L_1}$ be the unitary on $\ell^2(\G_{\L_1})$ defined by 
\begin{equation*}
(U_{\L_1}\xi)(x,n,z) = \xi(\sigma_{\L_1}(x), n-1, z), \qquad \xi \in \ell^2(\G_{\L_1}), \, (x,n,z) \in \G_{\L_1}, 
\end{equation*}
that is regarded as the implementing unitary of the positive generator of 
the group representation of $\Z$ in the crossed product
$C^*(R_{\L_1})\rtimes_{\rho_{\L_1}}\Z$.
We then have the 
equality
$U_{\L_1} f U_{\L_1}^* = f\circ \sigma_{\L_1}^{-1}$ for $f \in C(\X_{\L_1})$.
Since  $\Phi(f) = \Phi_h(f)$ for all $f \in C(\X_{\L_1})$,
we see that 
\begin{equation*}
\Phi(U_{\L_1}) \Phi_h(f) \Phi(U_{\L_1}^*) 
= \Phi( f\circ \sigma_{\L_1}^{-1})= \Phi_h( f\circ \sigma_{\L_1}^{-1})
= \Phi_h(U_{\L_1} f  U_{\L_1}^*),
\end{equation*}
so that 
$
\Phi_h^{-1}(\Phi(U_{\L_1})) f \Phi_h^{-1}(\Phi(U_{\L_1}^*)) 
= U_{\L_1} f U_{\L_1}^*
$
and hence
\begin{equation*}
U_{\L_1}^*\Phi_h^{-1}(\Phi(U_{\L_1})) f  
=  f U_{\L_1}^*\Phi_h^{-1}(\Phi(U_{\L_1}))
\quad
\text{ for all }
f \in C(\X_{\L_1}).
\end{equation*}
Since  
the dynamical system
$(\X_{\L_1}, \sigma_{\L_1})$ is equivalently essentially free,
the groupoid 
$\G_{\L_1}$ is essentially principal 
by Lemma \ref{lem:essfree}. 
By \cite[Proposition 4.7]{Renault} (cf. \cite[Proposition 4.2]{Renault2}), 
$C(\X_{\L_1})=C(\G_{\L_1}^{(0)})$
is a maximal abelian $C^*$-subalgebra of $\R_{\L_1}.$
Hence there exists a unitary $u_1 \in C(\X_{\L_1})$ 
satisfying 
$U_{\L_1}^*\Phi_h^{-1}(\Phi(U_{\L_1})) = u_1$,
so that  we have 
\begin{equation}
\Phi(U_{\L_1}) = \Phi_h(U_{\L_1} u_1).\label{eq:PhiUphi}
\end{equation}
Since
$\Phi\circ\gamma_{{\L_1}_t}=\gamma_{{\L_2}_t} \circ \Phi $
and
$
\Phi_h\circ\Ad(U_t(c_{\L_1})) =\gamma_{{\L_2}_t}\circ \Phi_h,
$
the equality \eqref{eq:PhiUphi} tells us
\begin{equation}
\Phi\circ \gamma_{{\L_1}_t}(U_{\L_1}) 
= \Phi_h\circ\Ad(U_t(c_{\L_1}))(U_{\L_1} u_1),
\label{eq:PhiUphi2}
\end{equation}
so that the equality \eqref{eq:PhiUphi2} implies
\begin{equation}
\exp{(2\pi\sqrt{-1}t)}\Phi(U_{\L_1}) 
= \Phi_h(U_t(c_{\L_1}) U_{\L_1} u_1 U_t(c_{\L_1})^*)
\label{eq:PhiUphi3}
\end{equation}
because $\gamma_{{\L_1}_t}(U_{\L_1}) = \exp(2\pi\sqrt{-1}t)U_{\L_1}$.
Since 
we have  
\begin{align*}
[U_t(-c_{\L_1})u_1\xi](x,n,z)
& =\exp{(2\pi\sqrt{-1} (-c_{\L_1}(x,n,z))t)}[u_1\xi](x,n,z) \\
& =u_1(x) \exp{(2\pi\sqrt{-1} (-c_{\L_1}(x,n,z))t)}\xi(x,n,z) \\
& =[u_1(x) U_t(-c_{\L_1})\xi](x,n,z) 
\end{align*}
for $\xi \in l^2(\G_{\L_1})$ and $(x,n,z) \in \G_{\L_1}$,
 we see
$U_t(c_{\L_1})^*u_1 =u_1 U_t(c_{\L_1})^*$.
Hence the equality \eqref{eq:PhiUphi3} 
goes to 
\begin{equation*}
\exp{(2\pi\sqrt{-1}t)}\Phi(U_{\L_1}) 
= \Phi_h(U_t(c_{\L_1}) U_{\L_1}  U_t(c_{\L_1})^*u_1)
= \Phi_h(U_t(c_{\L_1}) U_{\L_1}  U_t(c_{\L_1})^*)
\end{equation*}
because of \eqref{eq:PhiUphi}, so that we have 
\begin{equation}
\exp{(2\pi\sqrt{-1}t)}U_{\L_1} 
= U_t(c_{\L_1}) U_{\L_1}  U_t(c_{\L_1})^*.
\label{eq:PhiUphi6}
\end{equation}
Since we easily know that 
\begin{align*}
  & [U_t(c_{\L_1}) U_{\L_1}  U_t(c_{\L_1})^*\xi](x,n,z) \\
= & \exp{(2\pi\sqrt{-1} ( c_{\L_1}(x,n,z) -c_{\L_1}(\sigma_{\L_1}(x), n-1,z))t)}\xi(\sigma_{\L_1}(x),n-1,z) 
\end{align*}
and
\begin{equation*}
[\exp{(2\pi\sqrt{-1}t)}U_{\L_1}\xi](x,n,z) 
= \exp{(2\pi\sqrt{-1}t)}\xi(\sigma_{\L_1}(x),n-1,z)
\end{equation*}
for  $\xi \in l^2(\G_{\L_1})$ and 
$(x,n,z) \in \G_{\L_1}$, 
the equality \eqref{eq:PhiUphi6}
ensures us the identity
\begin{equation*}
c_{\L_1}(x,n,z) -c_{\L_1}(\sigma_{\L_1}(x), n-1,z) =1,
\qquad (x,n,z) \in \G_{\L_1}.
\end{equation*}
By the formulas \eqref{eq:cl1} and \eqref{eq:c1nm}, the equalities 
\begin{align*}
  &c_{\L_1}(x,n,z) -c_{\L_1}(\sigma_{\L_1}(x), n-1,z) \\
=&\{c_1^n(x) + d_1(\sigma_{\L_1}^n(x),z)\} 
  -\{c_1^{n-1}(\sigma_{\L_1}(x)) + d_1(\sigma_{\L_1}^{n-1}(\sigma_{\L_1}(x)), z) \}
= c_1(x)
\end{align*}
hold, proving
$c_1(x) =1$ for all $x \in \X_{\L_1}$.
\end{proof}


\medskip

We thus have the following theorem.
\begin{theorem}\label{thm:GRmain}
Suppose that $(\X_{\mathcal{L}_i},\sigma_{\mathcal{L}_i}), i=1,2$ are equivalently essentially free.
The following two assertions are equivalent.
\hspace{7cm}
\begin{enumerate}
\renewcommand{\theenumi}{\roman{enumi}}
\renewcommand{\labelenumi}{\textup{(\theenumi)}}
\item
There exists an isomorphism 
$\varphi:\mathcal{G}_{\mathcal{L}_1}\longrightarrow  \mathcal{G}_{\mathcal{L}_2}$
of the \'etale groupoids such that
$d_{\mathcal{L}_2} =\varphi\circ d_{\mathcal{L}_1}$.
\item
There exists an isomorphism 
$\Phi: \R_{\mathcal{L}_1} \longrightarrow \R_{\mathcal{L}_2}
$ of $C^*$-algebras such that
$\Phi(C({\mathcal{X}}_{\mathcal{L}_1})) =C({\mathcal{X}}_{\mathcal{L}_2})$
and
$\Phi\circ\gamma_{{\L_1}_t}=\gamma_{{\L_2}_t}\circ\Phi$,
$t \in \T$.
\end{enumerate}
\end{theorem}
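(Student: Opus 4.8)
The plan is to recognize that the statement is the conjunction of two facts already established in the excerpt, so the proof amounts to assembling them and matching notation. The implication (ii) $\Rightarrow$ (i) is nothing other than Proposition \ref{prop:main6.1}: under the standing equivalently essentially free hypotheses, an isomorphism $\Phi$ with $\Phi(C(\X_{\mathcal{L}_1})) = C(\X_{\mathcal{L}_2})$ and $\Phi\circ\gamma_{{\L_1}_t} = \gamma_{{\L_2}_t}\circ\Phi$ yields the desired groupoid isomorphism $\varphi$ intertwining the cocycles $d_{\mathcal{L}_1}, d_{\mathcal{L}_2}$. Hence for this direction I would simply cite that proposition.

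For (i) $\Rightarrow$ (ii) I would apply Lemma \ref{lem:fgPhi} to the particular cocycles $f = d_{\mathcal{L}_1}$ and $g = d_{\mathcal{L}_2}$. These are continuous groupoid homomorphisms into $\Z$, and the compatibility hypothesis of (i), read as $d_{\mathcal{L}_1} = d_{\mathcal{L}_2}\circ\varphi$, is exactly the condition $f = g\circ\varphi$ demanded by the lemma. The lemma then furnishes an isomorphism $\Phi:\R_{\mathcal{L}_1}\longrightarrow\R_{\mathcal{L}_2}$ of $C^*$-algebras with $\Phi(C(\X_{\mathcal{L}_1})) = C(\X_{\mathcal{L}_2})$ and $\Phi\circ\Ad(U_t(d_{\mathcal{L}_1})) = \Ad(U_t(d_{\mathcal{L}_2}))\circ\Phi$ for all $t \in \T$.

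To conclude I would invoke the definition $\gamma_{{\L_i}_t} = \Ad(U_t(d_{\mathcal{L}_i}))$ recorded just before the statement, so that the intertwining relation produced by the lemma reads precisely $\Phi\circ\gamma_{{\L_1}_t} = \gamma_{{\L_2}_t}\circ\Phi$ for all $t \in \T$, which is the content of (ii). Combining the two directions establishes the equivalence.

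The argument is essentially a matter of unwinding definitions, so there is no serious analytic obstacle; the only point requiring care is the orientation of the cocycle identity in (i). The displayed condition $d_{\mathcal{L}_2} = \varphi\circ d_{\mathcal{L}_1}$ is a slight abuse of notation, since $d_{\mathcal{L}_1}$ already maps into $\Z$ and the literal composite does not type-check, and I would make explicit that it is to be read as $d_{\mathcal{L}_1} = d_{\mathcal{L}_2}\circ\varphi$, i.e. that $\varphi$ carries the grading of $\G_{\mathcal{L}_1}$ to that of $\G_{\mathcal{L}_2}$. With this reading the hypotheses of Lemma \ref{lem:fgPhi} and Proposition \ref{prop:main6.1} align exactly with (i) and (ii), and the proof is complete.
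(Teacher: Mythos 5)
Your proposal is correct and follows exactly the paper's own proof: the implication (i) $\Rightarrow$ (ii) is obtained by applying Lemma \ref{lem:fgPhi} with $f = d_{\mathcal{L}_1}$, $g = d_{\mathcal{L}_2}$ and identifying $\gamma_{{\L_i}_t}$ with $\Ad(U_t(d_{\mathcal{L}_i}))$, while (ii) $\Rightarrow$ (i) is precisely Proposition \ref{prop:main6.1}. Your remark that the displayed condition $d_{\mathcal{L}_2} = \varphi\circ d_{\mathcal{L}_1}$ must be read as $d_{\mathcal{L}_1} = d_{\mathcal{L}_2}\circ\varphi$ is also right, and is how the paper itself implicitly uses it when invoking the lemma.
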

\begin{proof}
(i) $\Longrightarrow$ (ii):
Assume that there exists an isomorphism 
$\varphi:\mathcal{G}_{\mathcal{L}_1}\longrightarrow  \mathcal{G}_{\mathcal{L}_2}$
of the \'etale groupoids such that
$d_{\mathcal{L}_2} =\varphi\circ d_{\mathcal{L}_1}$.
By Lemma \ref{lem:fgPhi} with the equality 
$d_{\mathcal{L}_2} =\varphi\circ d_{\mathcal{L}_1}$,  
 there exists an isomorphism 
$\Phi:\R_{\mathcal{L}_1}\longrightarrow \R_{\mathcal{L}_2}$ 
of $C^*$-algebras such that 
$$
\Phi(C(\X_{\mathcal{L}_1}))= C(\X_{\L_2})\quad\text{ and }
\quad \Phi\circ\Ad(U_t(d_{\L_1})) = \Ad(U_t(d_{\L_2}))\circ\Phi, \quad t\in \T
$$ 
so that $\Phi\circ\gamma_{{\L_1}_t}=\gamma_{{\L_2}_t}\circ\Phi$,
$t \in \T$.

The assertion (ii) $\Longrightarrow$ (i)
comes from Proposition \ref{prop:main6.1}.
\end{proof}


\section{Application to subshifts}
Let $\L = (\L^-,\L^+)$ be a $\lambda$-graph bisystem over $\Sigma$ satisfying FPCC.
Recall that its presenting subshift $\Lambda_\L$ 
 is defined by the subshift over $\Sigma$ whose admissible words 
consist of the set of words
$\cup_{v \in V} P(v)$,
where $V = \cup_{l=0}^\infty V_l$ 
is the common vertex set of $\L$.
For 
$
x =\{ (v_{(k,l)}, e^-_{(k-1,k),l}, e^+_{k,(l,l+1)})\}_{(k,l) \in \mbbZ2} \in \XL,
$
put
\begin{equation}
x_k =\lambda^-(e^-_{(k-1,k), l}) \qquad \text{ for } (k,l) \in \mbbZ2, \label{eq:xk}
\end{equation}
that does not depend on $l$.
It is also written as
$
x_l =\lambda^+(e^+_{k, (l,l+1)}) 
$
for
$ (k,l) \in \mbbZ2.
$
The set of the sequences
$(x_n)_{n\in \Z}$ defines the subshift
$\Lambda_{\L}$.

Take and fix a real number 
$r_\circ$ such as 
$0<r_\circ <1$.
The shift space $\Lambda_{\L}$ is a compact metric space  by the metric defined by for 
$a =(a_n)_{n\in \Z}, b=(b_n)_{n\in \Z}$ with $a \ne b$
\begin{equation*}
d(a,b) 
=  
\begin{cases}
1 & \text{ if  }  a_0 \ne b_0,\\
(r_\circ)^{m} & \text{ if } m=\Max\{ n \mid a_k = b_k \text{ for all }k \text{ with }
|k| < n\}.
\end{cases}
\end{equation*}
Let us denote by  $\sigma_{\Lambda_\L}: \Lambda_\L \longrightarrow \Lambda_\L$
the shift homeomorphism defined by
$\sigma_{\Lambda_\L}((a_n)_{n\in \Z}) =(a_{n+1})_{n\in \Z}$ for $(a_n)_{n\in \Z} \in \Lambda_\L$.
We consider the asymptotic equivalence relation
$G^a_{\Lambda_\L}$ on $\Lambda_\L$ by setting
\begin{equation*}
G^a_{\Lambda_\L} =\{
(a,b) \in \Lambda_\L\times\Lambda_\L \mid
\lim_{n\to\infty}d(\sigma_{\Lambda_\L}^n(a),\sigma_{\Lambda_\L}^n(b))
 =
\lim_{n\to\infty}d(\sigma_{\Lambda_\L}^{-n}(a),\sigma_{\Lambda_\L}^{-n}(b)) =0\}.
\end{equation*}
If the subshift $\Lambda_\L$ is a topological Markov shift,
the topological dynamical system $(\Lambda_\L, \sigma_{\Lambda_\L})$
becomes a Smale space (\cite{Putnam1}, \cite{Ruelle1}).
For Smale spaces the equivalence relation $G_{\Lambda_\L}^a$ 
was called an asymptotic equivalence, that was first studied in Ruelle \cite{Ruelle1}
and Putnam \cite{Putnam1}.  

Define a map
 $\pi: \XL\longrightarrow \Lambda_{\L}$
by setting
\begin{equation}\label{eq:pi}
\pi(x)= (x_n)_{n\in \Z}\quad \text{ for }
x =\{ (v_{(k,l)}, e^-_{(k-1,k),l}, e^+_{k,(l,l+1)})\}_{(k,l) \in \XL},
\end{equation}
where $x_n$ is defined by \eqref{eq:xk}.
It is easy to see that 
 $\pi: \XL\longrightarrow \Lambda_{\L}$
is a continuous surjection, called a factor map. 
Then we have the following lemma,
that is easy to prove. 
\begin{lemma} \hspace{7cm} 
\begin{enumerate}
\renewcommand{\theenumi}{\roman{enumi}}
\renewcommand{\labelenumi}{\textup{(\theenumi)}}
\item
$\pi\circ \sigma_{\L} = \sigma_{\Lambda_\L}\circ\pi$.
\item 
$\pi\times\pi(R_\L) \subset G^a_{\Lambda_\L}$.
\end{enumerate}
 \end{lemma}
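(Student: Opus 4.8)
The plan is to verify both assertions directly from the defining formulas, since the content is purely combinatorial once the index conventions are unwound. For (i), I would take a configuration $x = \{ (v_{(k,l)}, e^-_{(k-1,k),l}, e^+_{k,(l,l+1)}) \}_{(k,l)\in\mbbZ2}$ and write $\pi(x) = (x_n)_{n\in\Z}$, where $x_k = \lambda^-(e^-_{(k-1,k),l})$, equivalently $x_l = \lambda^+(e^+_{k,(l,l+1)})$. By the definition \eqref{eq:sigmaL}, the $\lambda^-$-edge of $\sigma_\L(x)$ at position $(k,l)$ is the edge $e^-_{(k,k+1),l+1}$ of $x$, whose label is $x_{k+1}$. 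Reading $\pi(\sigma_\L(x))$ off these labels gives the sequence $(x_{k+1})_{k\in\Z}$, which is exactly $\sigma_{\Lambda_\L}(\pi(x))$; the $\lambda^+$-edges $e^+_{k+1,(l+1,l+2)}$, carrying the label $x_{l+1}$, yield the same identity. Thus (i) reduces to a single index shift.

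For (ii), suppose $(x,z) \in R_\L$, so that $\square_{(p,q)}(x) = \square_{(p,q)}(z)$ for some $(p,q)\in\mbbZ2$. The key observation is that a $(p,q)$-rectangle fixes the labels on all of its edges: from the $\lambda^-$-edges $e^-_{(k-1,k),l}$ with $k\le p$ one reads off $x_k = z_k$ for every $k\le p$, and from the $\lambda^+$-edges $e^+_{k,(l,l+1)}$ with $l\ge q$ one reads off $x_l = z_l$ for every $l\ge q$. Hence, writing $a = \pi(x) = (x_n)_{n\in\Z}$ and $b = \pi(z) = (z_n)_{n\in\Z}$, the two biinfinite sequences agree on $\{n\le p\}\cup\{n\ge q\}$ and can differ only on the finite window $\{p+1,\dots,q-1\}$.

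It then remains to feed this \emph{agreement outside a finite window} into the definitions of the metric $d$ and of $G^a_{\Lambda_\L}$. Fix $N\in\N$. For the forward limit, whenever $n \ge q+N$ every index $m+n$ with $|m|<N$ satisfies $m+n\ge q$, so $\sigma_{\Lambda_\L}^n(a)$ and $\sigma_{\Lambda_\L}^n(b)$ coincide on $\{|m|<N\}$ and therefore $d(\sigma_{\Lambda_\L}^n(a),\sigma_{\Lambda_\L}^n(b)) \le (r_\circ)^N$; since $0<r_\circ<1$, letting $N\to\infty$ gives $\lim_{n\to\infty} d(\sigma_{\Lambda_\L}^n(a),\sigma_{\Lambda_\L}^n(b)) = 0$. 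Symmetrically, for $n\ge N-p$ every index $m-n$ with $|m|<N$ satisfies $m-n\le p$, which yields $\lim_{n\to\infty} d(\sigma_{\Lambda_\L}^{-n}(a),\sigma_{\Lambda_\L}^{-n}(b)) = 0$. Both limits vanish, so $(a,b)\in G^a_{\Lambda_\L}$ and the inclusion $\pi\times\pi(R_\L)\subset G^a_{\Lambda_\L}$ follows. I do not anticipate a genuine obstacle; the only point demanding care is the bookkeeping of which shifted indices land in the half-lines $\{n\le p\}$ and $\{n\ge q\}$, together with the elementary fact that $(r_\circ)^N\to 0$.
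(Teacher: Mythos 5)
Your proof is correct: part (i) is exactly the index shift dictated by the definition of $\sigma_\L$ in \eqref{eq:sigmaL}, and part (ii) correctly extracts from $\square_{(p,q)}(x)=\square_{(p,q)}(z)$ that $\pi(x)$ and $\pi(z)$ agree on $\{n\le p\}\cup\{n\ge q\}$, which gives both asymptotic limits via the estimate $d\le (r_\circ)^N$. The paper offers no argument at all for this lemma (it is stated as ``easy to prove''), so your direct verification supplies precisely the routine computation the paper leaves to the reader.
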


Let $(\Lambda, \sigma_\Lambda)$ be a subshift over $\Sigma$.
As in Section 2, we have the canonical $\lambda$-graph bisystem written 
$\L_\Lambda = (\L_\Lambda^-,\L_\Lambda^+)$ from $\Lambda$.
The presenting subshift $\Lambda_{\L_{\Lambda}}$
coincides with the original subshift 
$\Lambda$. We then know the following proposition, that is direct from the construction of
 $\L_\Lambda = (\L_\Lambda^-,\L_\Lambda^+)$ from $\Lambda$.

\begin{proposition}
For a subshift $(\Lambda,\sigma_\Lambda)$, 
let $\pi:\X_{\Lambda_\L} \longrightarrow \Lambda$ 
be the factor map defined by \eqref{eq:pi}. 
\begin{enumerate}
\renewcommand{\theenumi}{\roman{enumi}}
\renewcommand{\labelenumi}{\textup{(\theenumi)}}
\item
$\Lambda$ is a topological Markov shift if and only if $\pi$ is injective and hence an isomorphism.
\item
$\Lambda$ is a sofic shift if and only if $\pi$ is a finite to one factor map.
\end{enumerate}
\end{proposition}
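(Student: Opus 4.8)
The plan is to analyze the fibers of $\pi$ through the bijection between $\XL$ and the set $\ZL$ of zigzag paths with head symbol. A configuration $x$ with $\pi(x)=a=(a_n)_{n\in\Z}$ is exactly a zigzag path starting from $v_{(-1,1)}$ with head $a_0$ whose label sequence is $a$, equivalently a coherent family of vertices $\{x_{(k,l)}\}$ in which each $x_{(k,l)}\in\Omega_{k,l}$ is a $(k,l)$-central class whose set of central words contains $(a_{k+1},\dots,a_{l-1})$. First I would record the crucial asymmetry coming from the resolving hypotheses: by left-resolving of $\L^+$ and right-resolving of $\L^-$, the edges of the configuration pointing toward the diagonal $k=l-1$ (decreasing $l$, increasing $k$) are uniquely determined by their endpoints and labels, so a configuration over $a$ is completely determined by the choices made along the zigzag in the lower-right direction. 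Hence $|\pi^{-1}(a)|$ equals the number of infinite branches of the tree of zigzag paths carrying the label sequence $a$, the branching at each step being indexed by the $\L^+$-edges out of a vertex with a prescribed label and the $\L^-$-edges into a vertex with a prescribed label.

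For part (i), to prove that $\Lambda$ being of finite type implies $\pi$ injective, I would first recode $\Lambda$ to a one-step shift of finite type (a higher block presentation), which changes neither $\XL$ nor $\pi$ up to conjugacy. For a one-step SFT the set $W_{k,l}(a)$ of central words, and hence the class $[a]_{(k,l)}$, depends only on the two boundary symbols $a_k$ and $a_l$. Since in any configuration over $a$ the $e^-$-edges at level $k$ carry the label $a_k$ and the $e^+$-edges at level $l$ carry the label $a_l$, I would propagate the local property of the $\lambda$-graph bisystem outward from the trivial diagonal $V_0=\{\Lambda\}$ to show that $x_{(k,l)}$ is forced to be precisely the class determined by $(a_k,a_l)$; this removes all branching and yields the unique canonical configuration over $a$. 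Being a continuous bijection of compact Hausdorff spaces that intertwines $\sigma_\L$ and $\sigma_{\Lambda_\L}$, $\pi$ is then a conjugacy. For the converse I would argue contrapositively: if $\Lambda$ is not of finite type, then for every $N$ there are admissible words whose central classes genuinely depend on context arbitrarily far away, and I would use this to build a sequence $a$ together with two distinct zigzag choices, each extending to a full configuration, so that $\pi$ is not injective.

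For part (ii) I would use the characterization of soficity by finiteness of the follower/predecessor sets $P(v)=F(v)$, equivalently boundedness of $m(l)=|V_l|$. If $\Lambda$ is sofic and $m(l)\le M$ for all $l$, then the tree of zigzag paths over any $a$ has at most $M$ vertices at each depth, so it has at most $M$ infinite branches; hence every fiber is finite (indeed uniformly bounded) and $\pi$ is finite to one. For the converse, again contrapositively, if $\Lambda$ is not sofic then $m(l)$ is unbounded, and I would produce a single point with an infinite fiber by a diagonal argument: choosing labels so that the unboundedly many distinct central classes appear as genuine alternatives along the zigzag, and invoking a K\"onig's lemma type compactness argument to promote infinitely many finite branchings into infinitely many distinct infinite configurations over one sequence $a$. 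I expect this last step --- manufacturing one point whose fiber is actually infinite out of the mere unboundedness of the number of classes --- to be the main obstacle, since it requires controlling coherence of the choices across the whole configuration rather than at a single level; the resolving-induced determinism of the backward direction is what should make the bookkeeping tractable.
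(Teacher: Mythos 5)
Your general framework is sound, and one of the four implications does follow from it: by the right-resolving property of $\L^-$ and the left-resolving property of $\L^+$, a partial zigzag path over the label sequence of $a$ is determined by its deepest vertex, so $\pi^{-1}(a)$ is identified with the set of infinite branches of a tree having at most $m(n)$ nodes at depth $n$; granting the equivalence of soficity with $\sup_l m(l)<\infty$ (which you only assert), this gives ``sofic $\Rightarrow$ finite-to-one''. (For context: the paper gives no argument at all for this proposition --- it is declared ``direct from the construction'' --- so your proposal has to stand on its own.) However, the step carrying the forward direction of (i) fails as stated. Write $W(s,t)$ for the set of admissible words that may occur between boundary symbols $s$ and $t$. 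For the golden mean shift ($11$ forbidden) and $a=0^\infty$, the level-one classes are $W(0,0)=\{0,1\}$ and $W(0,1)=W(1,0)=W(1,1)=\{0\}$, while the level-two classes $W(0,0)=\{00,01,10\}$, $W(0,1)=\{00,10\}$, $W(1,0)=\{00,01\}$, $W(1,1)=\{00\}$ are pairwise distinct. The non-tautological choice $x_{(-1,1)}=\{0\}$ passes every test available to an induction moving outward from the diagonal: it admits incoming $e^+$ and outgoing $e^-$ edges labelled $a_0=0$, an outgoing $e^+$ edge labelled $a_1=0$ (into the level-two class $W(1,0)$ or $W(1,1)$), and an incoming $e^-$ edge labelled $a_{-1}=0$ (from $W(0,1)$ or $W(1,1)$). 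It is excluded only one level later: $x_{(-2,1)}$ is then forced to be $W(0,1)$ or $W(1,1)$, and a level-two class has an outgoing $e^+$ edge labelled $0$ only if it equals $W(s,0)$ for some $s$, i.e.\ only if it is $\{00,01,10\}$ or $\{00,01\}$, which $\{00,10\}$ and $\{00\}$ are not. Thus distinct boundary pairs can define the same class at low levels, the vertex is genuinely \emph{not} forced to be ``the class determined by $(a_k,a_l)$'' by outward propagation, and a correct proof requires look-ahead or a global argument that your sketch does not contain.

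The two converse implications are not proved at all. For (i), given that $\Lambda$ is not of finite type you must produce one point carrying two distinct configurations; the even shift shows what these must look like: over $a=0^\infty$, besides the tautological configuration there are parity configurations whose vertex at $(k,l)$ is the class of the pair consisting of a past $\cdots 010^{j}$ with $j\equiv k+c \pmod 2$ and the future $0^\infty$, and crucially no single point of $\Lambda$ realizes these classes for all $(k,l)$ simultaneously (the witness $1$ must recede to $-\infty$). So such configurations cannot be obtained merely by exhibiting finite words whose admissibility depends on remote context; one needs a compactness or limiting argument on classes, which you do not give. For (ii) you concede the gap yourself: converting unboundedness of $m(l)$, which concerns classes of varying points, into infinitely many mutually coherent configurations over a single point is exactly the missing content, and this direction additionally relies on the harder half (boundedness implies soficity) of the equivalence you invoked without proof. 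In sum, of the four implications in the proposition, your proposal establishes only ``sofic $\Rightarrow$ finite-to-one''.
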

For a subshift $\Lambda$ and its canonical $\lambda$-graph bisystem 
$\L_\Lambda = (\L_\Lambda^-,\L_\Lambda^+),$
we have the associated configuration space
$\X_{\L_{\Lambda}}$ with shift homeomorphism $\sigma_{\L_\Lambda}$,
the \'etale equivalence relation $R_{\L_{\Lambda}}$,
the \'etale groupoid  $\G_{\L_{\Lambda}}$,
the $C^*$-algebras $\F_{\L_{\Lambda}}$,
$\R_{\L_{\Lambda}}$ and its dual action $\gamma_{\L_{\Lambda}}$.
They are denoted by
$\X_{\Lambda}$ with $\sigma_{\X_{\Lambda}}$,
 $R_{\Lambda}$, $\G_{\Lambda}$,
$\F_{\Lambda}$,
$\R_{\Lambda}$ and $\gamma_\Lambda$,
respectively.
The groupoid homomorphism
$d_{\L_\Lambda} :  \G_{\L_{\Lambda}}\longrightarrow \Z$
 is also denoted by
$d_\Lambda$.

\begin{proposition}\label{prop:topconjugate1}
Suppose that two-sided subshifts $\Lambda_1$ and $\Lambda_2$ are topologically conjugate
via a homeomorphism $h_\Lambda: \Lambda_1\longrightarrow \Lambda_2$.
Then there exists a homeomorphism
$h: \mathcal{X}_{\Lambda_1}\longrightarrow \mathcal{X}_{\Lambda_2}$
satisfying the following conditions:
\begin{enumerate}
\renewcommand{\theenumi}{\roman{enumi}}
\renewcommand{\labelenumi}{\textup{(\theenumi)}}
\item 
$h_{\Lambda}\circ \pi_1 = \pi_2\circ h$,
where $\pi_i: \X_{\Lambda_i}\longrightarrow \Lambda_i, i=1,2$
is the factor map defined by \eqref{eq:pi}.
\item 
$h\circ \sigma_{\X_{\Lambda_1}} =\sigma_{\X_{\Lambda_2}}\circ h$.
\item
$h\times h(R_{\Lambda_1}) = R_{\Lambda_2}$.
\item The map
$h\times h :R_{\Lambda_1} \longrightarrow R_{\Lambda_2}$
is a homeomorphism with respect to the topology on the \'etale equivalence relations
$R_{\Lambda_1}$ and $R_{\Lambda_2}$.
\end{enumerate}
\end{proposition}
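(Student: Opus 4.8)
The plan is to realize $h$ from a sliding block presentation of $h_\Lambda$ and to transport the central--equivalence data out of which the two configuration spaces are built. By the Curtis--Hedlund--Lyndon theorem there is an integer $N\ge 0$ such that both $h_\Lambda$ and $h_\Lambda^{-1}$ are block maps of radius $N$: the symbol $(h_\Lambda(a))_m$ depends only on $(a_{m-N},\dots,a_{m+N})$, and symmetrically for $h_\Lambda^{-1}$. Throughout I write $x_\bullet=\pi_1(x)=(x_n)_{n\in\Z}\in\Lambda_1$ for the label sequence read off a configuration $x$ by \eqref{eq:xk}, and $y_\bullet=h_\Lambda(x_\bullet)\in\Lambda_2$ for its recoding.

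First I would establish the key transfer lemma for central classes: for $a,b\in\Lambda_1$ and $(k,l)\in\mbbZ2$ with $l-k$ large compared with $N$,
\[
a\overset{c}{\underset{(k,l)}{\sim}}b
\ \Longrightarrow\
h_\Lambda(a)\,\overset{c}{\underset{(k-N,\,l+N)}{\sim}}\,h_\Lambda(b),
\]
the reverse implication being the same argument run for $h_\Lambda^{-1}$. The content is that the tails of $h_\Lambda(a)$ to the left of $k-N$ and to the right of $l+N$ are functions of the tails of $a$ to the left of $k$ and to the right of $l$, so that whether a central word fills the $(k-N,l+N)$--gap of $h_\Lambda(a)$ is decided, through the radius--$N$ recoding and its inverse, by the data already recorded in the $(k,l)$--class of $a$. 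Hence $h_\Lambda$ induces a well-defined map $\Omega^{(1)}_{k,l}\to\Omega^{(2)}_{k-N,\,l+N}$, $[a]_{(k,l)}\mapsto[h_\Lambda a]_{(k-N,l+N)}$. I stress that this acts on the \emph{classes} themselves, i.e. on the vertices of the canonical $\lambda$--graph bisystems, which record strictly more than $x_\bullet$ (they are the states of the bi-directional cover of $\Lambda$, whose multiplicity over a symbol sequence is exactly the non-injectivity of $\pi$); this is what will let $h$ see the fibres of $\pi$ and be injective. I would then verify that the induced vertex map is a morphism of $\lambda$--graph bisystems: it respects the source/terminal incidences and carries $\L^{\pm}_{\Lambda_1}$--edges to $\L^{\pm}_{\Lambda_2}$--edges after recoding labels by the block map, using the left/right--resolving and local properties. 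The inverse morphism is built from $h_\Lambda^{-1}$, and since the two compose to the natural identifications the vertex maps are bijective.

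Granting the morphism, I would transport a configuration $x$ vertex by vertex: set the label sequence of $h(x)$ to be $y_\bullet=h_\Lambda(\pi_1(x))$, which forces (i), and set its vertex at each window $(k-N,l+N)$ to be the image of the vertex $v_{(k,l)}(x)$ of $x$. Coherence of the transported family is exactly the morphism property, so it defines a genuine configuration on all windows of size $>2N$; the realization Lemma~\ref{lem:muvx} and the triangle description of Lemma~\ref{lem:pqtriangle}, together with the bijection between configurations and zigzag paths with head symbol, then extend it \emph{uniquely} across the central diagonal, where the shifted class map degenerates, the resolving and local properties of $\L_{\Lambda_2}$ filling in the windows of size $\le 2N$ with no ambiguity. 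Because the transported vertices remember $x$'s cover data, $h$ is injective; performing the same construction with $h_\Lambda^{-1}$ yields its inverse, and both maps are continuous because the transport has finite range $N$, so that coincidence of two configurations on a triangle $\triangle_{(-p,p)}$ forces coincidence of their images on $\triangle_{(-p+N,\,p-N)}$.

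Property (ii) is then immediate, since block maps commute with the shift and $\sigma_{\X_\Lambda}$ of \eqref{eq:sigmaL} merely translates the window along the diagonal. For (iii), $x\underset{(p,q)}{\sim}z$ means $\square_{(p,q)}(x)=\square_{(p,q)}(z)$; since the recoding has range $N$ this forces $\square_{(p-N,\,q+N)}(h(x))=\square_{(p-N,\,q+N)}(h(z))$, whence $h(x)\sim h(z)$, and the reverse inclusion comes from $h_\Lambda^{-1}$, giving $h\times h(R_{\Lambda_1})=R_{\Lambda_2}$. For (iv) one notes that $h\times h$ sends each compact--open piece $R_{\Lambda_1,(p,q)}$ continuously into $R_{\Lambda_2,(p-N,\,q+N)}$; being a continuous bijection between compact Hausdorff pieces with continuous inverse it is a homeomorphism, and this is compatible with the inductive--limit topologies on $R_{\Lambda_1}$ and $R_{\Lambda_2}$. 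The step I expect to be the main obstacle is precisely the transfer lemma: proving that the induced vertex map is well defined independently of the chosen representatives and is genuinely a morphism of $\lambda$--graph bisystems under the window--shifting recoding, and in particular controlling the degeneration near the central diagonal, which is why $h$ must be pinned down through its asymptotic (large--window) structure and the uniqueness in Lemma~\ref{lem:muvx} rather than symbol by symbol.
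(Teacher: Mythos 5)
Your proposal breaks down at the transfer lemma, and the failure is not a technical detail that can be patched: the lemma is false. The reason is that central equivalence is not monotone under enlarging the window, because the $(k,l)$-class of $a$ records only the set $W_{k,l}(a)$ of admissible fillings and \emph{not} the tails $a|_{(-\infty,k]},\ a|_{[l,\infty)}$ themselves, whereas fillability of a larger gap depends on tail coordinates that the class has forgotten. Concretely, let $\Lambda_1=\Lambda_2$ be the golden mean shift (no two consecutive $1$'s) and $h_\Lambda=\mathrm{id}$, which is a block map of radius $N=1$ in both directions; your lemma must then hold with $N=1$, since its proposed proof uses $N$ only as an upper bound on the coding window. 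For $l-k-1\ge 2$ one checks that $W_{k,l}(a)=W_{k,l}(b)$ if and only if $(a_k,a_l)=(b_k,b_l)$: the set $W_{k,l}(a)$ contains a word beginning with $1$ iff $a_k=0$, and a word ending with $1$ iff $a_l=0$. Now take $a$ the all-zero point and $b$ equal to $a$ except $b_{l+1}=1$. Then $a\overset{c}{\underset{(k,l)}{\sim}}b$, but $W_{k-1,l+1}(a)\ne W_{k-1,l+1}(b)$ because $(a_{k-1},a_{l+1})=(0,0)$ while $(b_{k-1},b_{l+1})=(0,1)$. The same failure occurs for a code that genuinely moves coordinates: $h_\Lambda=\sigma$ has radius $1$, and the $(k-1,l+1)$-class of $\sigma(a)$ is the boundary pair $(a_k,a_{l+2})$, which is not a function of the pair $(a_k,a_l)$ determining the $(k,l)$-class of $a$. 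Hence the map $\Omega^{(1)}_{k,l}\to\Omega^{(2)}_{k-N,\,l+N}$ is not well defined, and with it the claimed morphism of $\lambda$-graph bisystems, the vertex-by-vertex transport of configurations, and your later arguments for injectivity, continuity, and properties (iii)--(iv) all collapse. The precise false sentence is ``whether a central word fills the $(k-N,l+N)$-gap of $h_\Lambda(a)$ is decided \dots by the data already recorded in the $(k,l)$-class of $a$'': it is decided by the tails of $a$, which the class does not record.

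Moreover this cannot be repaired inside the Curtis--Hedlund--Lyndon framework: for any conjugacy that is not essentially a $1$-block recoding in both directions (up to a shift power), no assignment of windows $(k,l)\mapsto (k',l')$ makes central classes map to central classes, since the class of $h_\Lambda(a)$ at any window is a nonconstant function of coordinates of $a$ that the $(k,l)$-class cannot see. This is exactly why the paper does not argue through sliding block codes at all. Its proof invokes Nasu's theorem to factor the conjugacy into a finite chain of bipartite codes, realizes $\Lambda_1$ and $\Lambda_2$ inside the $2$-higher block shift of a common bipartite subshift $\widehat{\Lambda}$, and identifies the two configuration spaces and equivalence relations by the single shift $\hat{\sigma}$ on $\widehat{\Lambda}$ (with $\hat{\sigma}^2$ giving the original shifts, which yields (iv)). The essential difference is that the paper transports objects that remember the tails --- whole configurations, equivalently the rectangle/zigzag data of Lemmas \ref{lem:muvx} and \ref{lem:pqtriangle} --- through an elementary conjugacy, rather than transporting individual vertices $\Omega_{k,l}$ through a general block code; any correct proof has to operate at that level.
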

\begin{proof}
Suppose that subshifts $\Lambda_1$ over alphabet $\Sigma_1$  
and $\Lambda_2$ over alphabet $\Sigma_2$  are topologically conjugate.
As in the paper of Nasu \cite{Nasu}, \cite{NasuMemoir},
they are related by a finite chain of bipartitely related subshifts.
Hence we may assume that  
there exist finite alhabets $C, D$ and injections
$\phi_1: \Sigma_1\longrightarrow C\cdot D$,
$\phi_2: \Sigma_2\longrightarrow D\cdot C$
and a bipartite subshift
$(\widehat{\Lambda}, \hat{\sigma})$ over alphabet $C\cup D$,
such that 
both subshifts 
$\Lambda_1$ and $\Lambda_2$ are regarded as 
disjoint subsystems of its $2$-higher block shift $\widehat{\Lambda}^{(2)}$
through the maps $\phi_1$ and $\phi_2$, respectively.
Then the restriction of the shift $\hat{\sigma}$ to 
the subshift $\Lambda_1$ 
gives rise to a topological conjugacy from 
$\Lambda_1$ to $\Lambda_2$.
Hence  $(k,l)$-centrally equivalent pair
$x\overset{c}{\underset{(k,l)}{\sim}}z$ in $\Lambda_1$
goes to
$\hat{\sigma}(x)\overset{c}{\underset{(k,l)}{\sim}}\hat{\sigma}(z)$ in $\Lambda_2$.
Therefore we may identify $(k,l)$-centrally equivalence classes in
$\Lambda_1$ with those in $\Lambda_2$ by shifting on $\widehat{\Lambda}$,
so that
their configuration spaces 
$\mathcal{X}_{\Lambda_1}$ and  
$\mathcal{X}_{\Lambda_2}$ together with
their equivalence relations
$R_{\Lambda_1}$ and $R_{\Lambda_2}$
are also identified
by taking shift $\hat{\sigma}$.
Since $\hat{\sigma}^2$ on $\widehat{\Lambda}$
corresponds to their shifts on their original
subshifts $\Lambda_1$ and $\Lambda_2$,
we have the assertion (iv) above.
\end{proof} 

We note that the statements (iii) and (iv) of Proposition \ref{prop:topconjugate1}
imply that the equivalence relations 
$R_{\Lambda_1}$ and $R_{\Lambda_2}$ are isomorphic
(see \cite[Definition 6.10]{PutnamAMS}) if the subshifts 
$\Lambda_1$ and $\Lambda_2$ are topologically conjugate.

\begin{proposition}\label{prop:topconjugate2}
The following assertions are equivalent: 
\begin{enumerate}
\renewcommand{\theenumi}{\roman{enumi}}
\renewcommand{\labelenumi}{\textup{(\theenumi)}}
\item Two-sided subshifts $\Lambda_1$ and $\Lambda_2$ are topologically conjugate. 
\item There exist an isomorphism
$\varphi: \G_{\Lambda_1}\longrightarrow \G_{\Lambda_2}$
of \'etale groupoids 
and a homeomorphism 
$h_\Lambda:{\Lambda_1}\longrightarrow {\Lambda_2}$
of the shift spaces such that 
\begin{equation*}
\varphi(R_{\Lambda_1}) = R_{\Lambda_2},
\quad
h_\Lambda\circ\pi_1 = \pi_2\circ h,
\quad
d_{\Lambda_2}\circ \varphi = d_{\Lambda_1}
\end{equation*}
where $h =\varphi|_{\G_{\Lambda_1}^{(0)}}: 
\G_{\Lambda_1}^{(0)}\longrightarrow \G_{\Lambda_2}^{(0)}$
is the restriction of $\varphi$ to its unit space, and 
$\pi_i :\X_{\Lambda_i}\longrightarrow \Lambda_i, i=1,2$
is the factor map defined by \eqref{eq:pi}. 
\end{enumerate}
\end{proposition}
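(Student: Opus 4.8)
The plan is to prove the two implications separately, obtaining (i) $\Rightarrow$ (ii) directly from Proposition \ref{prop:topconjugate1} and treating (ii) $\Rightarrow$ (i) as the genuine converse.

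For (i) $\Rightarrow$ (ii): suppose $h_\Lambda:\Lambda_1\to\Lambda_2$ is a topological conjugacy. Proposition \ref{prop:topconjugate1} supplies a homeomorphism $h:\X_{\Lambda_1}\to\X_{\Lambda_2}$ with $h\circ\sigma_{\X_{\Lambda_1}}=\sigma_{\X_{\Lambda_2}}\circ h$, with $h\times h(R_{\Lambda_1})=R_{\Lambda_2}$ a homeomorphism of the \'etale equivalence relations, and with $h_\Lambda\circ\pi_1=\pi_2\circ h$. I would then set $\varphi(x,n,z)=(h(x),n,h(z))$. The exact intertwining gives $\sigma_{\X_{\Lambda_2}}^n(h(x))=h(\sigma_{\X_{\Lambda_1}}^n(x))$, so together with $h\times h(R_{\Lambda_1})=R_{\Lambda_2}$ the condition $(\sigma_{\X_{\Lambda_1}}^n(x),z)\in R_{\Lambda_1}$ is equivalent to $(\sigma_{\X_{\Lambda_2}}^n(h(x)),h(z))\in R_{\Lambda_2}$; hence $\varphi$ is well defined and bijective with inverse $(y,m,w)\mapsto(h^{-1}(y),m,h^{-1}(w))$. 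It is a groupoid homomorphism because the $\Z$-coordinate adds and $h$ is applied coordinatewise, and a homeomorphism because the topology on $\G_{\Lambda_i}$ is transported from $R_{\Lambda_i}\times\Z$ and $h\times h$ is a homeomorphism of the relations. Reading off the middle coordinate yields $d_{\Lambda_2}\circ\varphi=d_{\Lambda_1}$, restricting to $n=0$ yields $\varphi(R_{\Lambda_1})=R_{\Lambda_2}$, and $\varphi|_{\G_{\Lambda_1}^{(0)}}=h$, so $h_\Lambda\circ\pi_1=\pi_2\circ h$ is exactly Proposition \ref{prop:topconjugate1}(i).

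For (ii) $\Rightarrow$ (i): the first step is to recover the shape of $\varphi$. Writing $h=\varphi|_{\G_{\Lambda_1}^{(0)}}$ and using that $\varphi$ preserves the range and source maps together with $d_{\Lambda_2}\circ\varphi=d_{\Lambda_1}$, one gets $\varphi(x,n,z)=(h(x),n,h(z))$ for every $(x,n,z)\in\G_{\Lambda_1}$. Taking $z=\sigma_{\X_{\Lambda_1}}^n(x)$, membership of $\varphi(x,n,z)$ in $\G_{\Lambda_2}$ forces $(\sigma_{\X_{\Lambda_2}}^n(h(x)),h(\sigma_{\X_{\Lambda_1}}^n(x)))\in R_{\Lambda_2}$ for all $n$ and $x$, i.e. $h$ intertwines the two configuration shifts up to $R_{\Lambda_2}$-equivalence. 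Applying $\pi_2$ and using $\pi_2\circ h=h_\Lambda\circ\pi_1$, $\pi_i\circ\sigma_{\X_{\Lambda_i}}=\sigma_{\Lambda_i}\circ\pi_i$, and $\pi_2\times\pi_2(R_{\Lambda_2})\subset G^a_{\Lambda_2}$, I obtain that $\sigma_{\Lambda_2}^n(h_\Lambda(a))$ and $h_\Lambda(\sigma_{\Lambda_1}^n(a))$ are asymptotically equivalent (agreeing off a finite window) for every $a\in\Lambda_1$ and every $n$.

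The main obstacle is the final upgrade from this intertwining up to $R_{\Lambda_2}$ to an honest topological conjugacy. A $d$-preserving groupoid isomorphism does not by itself force $h_\Lambda$ to commute with the shifts — relabellings that fix the asymptotic relation are permitted — so one cannot simply take $h_\Lambda$ for the conjugacy. I would resolve this by matching the combinatorial data underlying the two groupoids: the compatibility $h_\Lambda\circ\pi_1=\pi_2\circ h$ together with $\varphi(R_{\Lambda_1})=R_{\Lambda_2}$ should identify the $(k,l)$-central-equivalence structures that define the vertex sets of the canonical $\lambda$-graph bisystems of $\Lambda_1$ and $\Lambda_2$, and then I would reverse the construction of Proposition \ref{prop:topconjugate1} (via Nasu's description of conjugacy by a finite chain of bipartite codes) to produce a genuine conjugacy of $\Lambda_1$ and $\Lambda_2$. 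I expect this combinatorial reconstruction, rather than the soft groupoid bookkeeping, to be the technical heart of the argument.
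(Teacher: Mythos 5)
Your direction (i) $\Rightarrow$ (ii) coincides with the paper's proof. The genuine gap is in (ii) $\Rightarrow$ (i). Up to the point where you recover $\varphi(x,n,z)=(h(x),n,h(z))$ and deduce $(\sigma_{\X_{\Lambda_2}}(h(x)),\, h(\sigma_{\X_{\Lambda_1}}(x))) \in R_{\Lambda_2}$, you follow the paper; but projecting through $\pi_2$ only gives you a \emph{pointwise} statement: for each $a=\pi_1(x)$ the sequences $\sigma_{\Lambda_2}(h_\Lambda(a))$ and $h_\Lambda(\sigma_{\Lambda_1}(a))$ agree outside a finite window \emph{depending on} $x$. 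From that alone one cannot conclude conjugacy, for exactly the reason you yourself flag, and the remedy you propose --- identifying the $(k,l)$-central-equivalence structures and ``reversing'' Proposition \ref{prop:topconjugate1} via Nasu's bipartite codes --- is left entirely as a hope, with no indication of how to carry it out; it is not the paper's argument and nothing in your sketch makes it work.

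What the paper actually does at this point is \emph{uniformize} the agreement window, and this is where the hypothesis that $\varphi$ restricts to a homeomorphism $R_{\Lambda_1}\rightarrow R_{\Lambda_2}$ for the \'etale (inductive limit) topologies is used in an essential way: one can choose the pairs $(p(x),q(x))$ with $\square_{(p(x),q(x))}(\sigma_{\X_{\Lambda_2}}(h(x)))=\square_{(p(x),q(x))}(h(\sigma_{\X_{\Lambda_1}}(x)))$ to depend \emph{continuously} on $x$, hence, by compactness of $\X_{\Lambda_1}$, boundedly. Setting $p_1=\min_x p(x)$, $q_1=\max_x q(x)$, repeating the argument for $\varphi^{-1}$ to get $p_2,q_2$, and putting $K=\max\{|p_1|,|q_1|,|p_2|,|q_2|\}$, surjectivity of $\pi_1$ and the intertwinings $\pi_i\circ\sigma_{\X_{\Lambda_i}}=\sigma_{\Lambda_i}\circ\pi_i$, $\pi_2\circ h=h_\Lambda\circ\pi_1$ yield the uniform identities $\sigma_{\Lambda_2}^{K+1}(h_\Lambda(a))_n=\sigma_{\Lambda_2}^{K}(h_\Lambda(\sigma_{\Lambda_1}(a)))_n$ for all $a\in\Lambda_1$, $n\ge 0$, together with the symmetric identity for $h_\Lambda^{-1}$. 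These are precisely conditions (3.1) and (3.3) of \cite[Definition 3.1]{Ma2019JOT}, and the paper concludes by invoking \cite[Theorem 3.3]{Ma2019JOT} and its proof, which upgrades such a uniform one-sided eventual intertwining to an honest topological conjugacy. So your proposal is missing two concrete ingredients: (a) the continuity-plus-compactness argument that turns pointwise agreement into a uniform bound $K$, and (b) the reduction to the cited theorem of \cite{Ma2019JOT} in place of an unexecuted combinatorial reconstruction.
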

\begin{proof}
(i) $\Longrightarrow$ (ii):
Suppose that two subshifts  $\Lambda_1$ and $\Lambda_2$ are topologically conjugate. 
By Proposition \ref{prop:topconjugate1}, 
there exists  a homeomorphim
$h: \mathcal{X}_{\Lambda_1}\longrightarrow \mathcal{X}_{\Lambda_2}$
satisfying the conditions (i), (ii), (iii) and (iv) in Proposition \ref{prop:topconjugate1}.
Define $\varphi(x,n,z) = (h(x), n, h(z))$ for $(x,n,z) \in \G_{\L_1}$. 
Since
$(\sigma_{\X_{\Lambda_1}}^n(x), z) \in R_{\Lambda_1}$,
we have 
$(h(\sigma_{\X_{\Lambda_1}}^n(x)), h(z)) \in R_{\Lambda_2}$,
because of  Proposition \ref{prop:topconjugate1} (iii).
As 
$h\circ\sigma_{\X_{\Lambda_1}} =\sigma_{\X_{\Lambda_2}}\circ h$,
we have 
$(\sigma_{\X_{\Lambda_2}}^n(h(x)), h(z)) \in R_{\Lambda_2}$
so that 
$\varphi(x,n,z) = (h(x), n, h(z))$ belongs to $R_{\Lambda_2}$.
Hence we may define
$\varphi: \G_{\Lambda_1}\longrightarrow \G_{\Lambda_2}$.
It is easy to see that 
$\varphi: \G_{\Lambda_1}\longrightarrow \G_{\Lambda_2}$
is an isomorphism of \'etale groupoids
satisfying the desired properties. 

(ii) $\Longrightarrow$ (i):
We identify the configuration spaces $\X_{\Lambda_i}$ with
the unit spaces $\G_{\Lambda_i}^{(0)}, i=1,2$, respectively.
For $x \in \X_{\Lambda_1}$ and $n \in \Z$,
we have 
$(x, n, \sigma_{\X_{\Lambda_1}}^n(x)) \in \G_{\Lambda_1}$,
and 
$\varphi(x, n, \sigma_{\X_{\Lambda_1}}^n(x)) \in \G_{\Lambda_2}$.
By the condition
$
d_{\Lambda_2}\circ \varphi = d_{\Lambda_1},
$
we have 
$$
\varphi(x, n, \sigma_{\X_{\Lambda_1}}^n(x)) =(h(x), n,h(\sigma_{\X_{\Lambda_1}}^n(x))).  
$$
In particular, for $n=1$,
we have
$(h(x), 1,h(\sigma_{\X_{\Lambda_1}}(x))) \in \G_{\Lambda_2}$
so that  
$$
(\sigma_{\X_{\Lambda_2}}(h(x)), h(\sigma_{\X_{\Lambda_1}}(x))) \in R_{\Lambda_2}.
$$
Since $\varphi:R_{\Lambda_1}\longrightarrow R_{\Lambda_2}$
is a homeomorphism with respect to their \'etale topology,
there exist continuous maps $p,q : \X_{\Lambda_1}\longrightarrow \Z$
such that $(p(x), q(x)) \in \mbbZ2$
and 
$$
\square_{(p(x),q(x))}(\sigma_{\X_{\Lambda_2}}(h(x)))
=\square_{(p(x),q(x))}(h(\sigma_{\X_{\Lambda_1}}(x))),
$$
so that 
\begin{equation}\label{eq:pi2sigma}
\pi_2(\sigma_{\X_{\Lambda_2}}(h(x)))_n
=\pi_2(h(\sigma_{\X_{\Lambda_1}}(x)))_n
\quad
\text{ for } n \le p(x) \, \text{ or }\, q(x) \le n.
\end{equation}
Since 
\begin{equation}\label{eq:pi2circh}
\pi_2\circ h = h_\Lambda\circ\pi_1, \qquad
\pi_i\circ\sigma_{\X_{\Lambda_i}} =\sigma_{\Lambda_i}\circ\pi_i, \quad, i=1,2,
\end{equation}
by \eqref{eq:pi2sigma}, \eqref{eq:pi2circh}, we have
\begin{equation}\label{eq:sigmaLambda2x}
(\sigma_{\Lambda_2}\circ h_\Lambda(\pi_1(x)))_n
=(h_\Lambda\circ\sigma_{\Lambda_1}(\pi_1(x)))_n
\quad
\text{ for } n \le p(x) \, \text{ or }\, q(x) \le n.
\end{equation}
Put
$p_1= \min\{p(x) \mid x \in \X_{\Lambda_1}\}$ and
$q_1= \max\{q(x) \mid x \in \X_{\Lambda_1}\}$
so that $(p_1,q_1) \in \mbbZ2$.
As $\pi_1:\X_{\Lambda_1}\longrightarrow \Lambda_1$ 
is surjective,
we have
\begin{equation}\label{eq:sigmaLambda2a}
(\sigma_{\Lambda_2}\circ h_\Lambda(a))_n
=(h_\Lambda\circ\sigma_{\Lambda_1}(a))_n
\quad
\text{ for } a\in \Lambda_1, \quad n \le p_1\, \text{ or }\, q_1 \le n.
\end{equation}
Similarly 
there exist $p_2, q_2\in \mbbZ2$ such that 
\begin{equation}\label{eq:sigmaLambda1b}
(\sigma_{\Lambda_1}\circ h_\Lambda^{-1}(b))_n
=(h_\Lambda^{-1}\circ\sigma_{\Lambda_2}(b))_n
\quad
\text{ for } b\in \Lambda_2, \quad n \le p_2 \, \text{ or }\, q_2 \le n.
\end{equation}
Put
$K := \max\{ |p_1|, |q_1|, |p_2|, |q_2| \}$ so that we have 
\begin{align}
\sigma_{\Lambda_2}^{K+1}(h_\Lambda(a))_n 
&= \sigma_{\Lambda_2}^K(h_{\Lambda_1}(a))_n, 
\quad
\text{ for } a\in \Lambda_1, \quad  n\ge 0, \label{eq:sigma2K1}\\
\sigma_{\Lambda_1}^{K+1}(h_\Lambda^{-1}(b))_n 
&= \sigma_{\Lambda_1}^K(h_{\Lambda_2}^{-1}(b))_n, 
\quad
\text{ for } b\in \Lambda_2, \quad n\ge 0. \label{eq:sigma1K1}
\end{align}
The above two equalities
\eqref{eq:sigma2K1} and \eqref{eq:sigma1K1}
are the same equalities (3.1) and (3.3) in \cite[Definition 3.1]{Ma2019JOT}.
By \cite[Theorem 3.3]{Ma2019JOT} and its proof,
noticing the remark before \cite[Theorem 3.3]{Ma2019JOT},
we know that two topological Markov shifts
 $\Lambda_1$ and $\Lambda_2$ are topologically conjugate.
\end{proof}
For a subshift $\Lambda$, the factor map
$\pi: \X_\Lambda \longrightarrow \Lambda$ gives rise to a natural inclusion
$C(\Lambda) \subset C(\X_\Lambda)$ of commutative $C^*$-algebras.
We henceforth regard $C(\Lambda)$ as a $C^*$-subalgebra of $C(\X_\Lambda)$.   
\begin{proposition}\label{prop:topconjugate3}
Suppose that the dynamical systems 
$(\X_{\Lambda_i}, \sigma_{\X_{\Lambda_i}}), i=1,2$ are equivalently essentially free.   
The following assertions are equivalent: 
\begin{enumerate}
\renewcommand{\theenumi}{\roman{enumi}}
\renewcommand{\labelenumi}{\textup{(\theenumi)}}
\item There exist an isomorphism
$\varphi: \G_{\Lambda_1}\longrightarrow \G_{\Lambda_2}$
of \'etale groupoids 
and a homeomorphism
$h_\Lambda: \Lambda_1\longrightarrow \Lambda_2$ 
such that 
$$
\varphi(R_{\Lambda_1}) = R_{\Lambda_2},
\quad
h_\Lambda\circ\pi_1 = \pi_2\circ h,
\quad
d_{\Lambda_2}\circ \varphi = d_{\Lambda_1}.
$$
\item
There exists an isomorphism of the $C^*$-algebra 
$\Phi:  \R_{\Lambda_1} \longrightarrow \R_{\Lambda_2}$ 
such that 
$$
\Phi(C(\X_{\Lambda_1})) = C(\X_{\Lambda_2}), \qquad
\Phi(C(\Lambda_1)) = C(\Lambda_2), \qquad
\Phi\circ \gamma_{{\Lambda_1}_t} 
= \gamma_{{\Lambda_2}_t}\circ\Phi, \quad t \in \mathbb{T}.
$$
\end{enumerate}
\end{proposition}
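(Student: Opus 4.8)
The plan is to deduce this proposition by combining the groupoid/$C^*$-algebra correspondences already established---Theorem~\ref{thm:GRmain}, Lemma~\ref{lem:groupoidC*6.2}, and Proposition~\ref{prop:main6.1}---with a single new ingredient, namely a Gelfand-duality translation of the extra datum $\Phi(C(\Lambda_1)) = C(\Lambda_2)$ into the factor-map compatibility $h_\Lambda\circ\pi_1 = \pi_2\circ h$. The point is that the conditions $d_{\Lambda_2}\circ\varphi = d_{\Lambda_1}$, preservation of $C(\X_{\Lambda_i})$, and $\gamma$-equivariance already correspond to one another by Theorem~\ref{thm:GRmain}, while $\varphi(R_{\Lambda_1}) = R_{\Lambda_2}$ corresponds to $\Phi(\F_{\Lambda_1}) = \F_{\Lambda_2}$ by Lemma~\ref{lem:groupoidC*6.2}; so only the clause about $C(\Lambda_i)$ and the factor maps requires genuinely new work.

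For (i)$\Rightarrow$(ii), I would first apply Lemma~\ref{lem:fgPhi} with $f = d_{\Lambda_1}$ and $g = d_{\Lambda_2}$ to the isomorphism $\varphi$ satisfying $d_{\Lambda_2}\circ\varphi = d_{\Lambda_1}$; since $\Ad(U_t(d_{\Lambda_i})) = \gamma_{{\Lambda_i}_t}$, this produces an isomorphism $\Phi\colon\R_{\Lambda_1}\to\R_{\Lambda_2}$ with $\Phi(C(\X_{\Lambda_1})) = C(\X_{\Lambda_2})$ and $\Phi\circ\gamma_{{\Lambda_1}_t} = \gamma_{{\Lambda_2}_t}\circ\Phi$. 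By the explicit form $\Phi = \Ad(V_{h^{-1}})$ of the construction, $\Phi$ restricts on $C(\X_{\Lambda_1})$ to $f\mapsto f\circ h^{-1}$, where $h = \varphi|_{\G_{\Lambda_1}^{(0)}}$. It then remains to check $\Phi(C(\Lambda_1)) = C(\Lambda_2)$: regarding $C(\Lambda_i) = \pi_i^*C(\Lambda_i)$ as the functions on $\X_{\Lambda_i}$ factoring through $\pi_i$, for $g\in C(\Lambda_1)$ one computes $\Phi(g\circ\pi_1) = g\circ(\pi_1\circ h^{-1}) = (g\circ h_\Lambda^{-1})\circ\pi_2$ using $h_\Lambda\circ\pi_1 = \pi_2\circ h$, and $g\circ h_\Lambda^{-1}\in C(\Lambda_2)$; applying the same computation to $h^{-1}$ and $h_\Lambda^{-1}$ gives the reverse inclusion, hence equality.

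For (ii)$\Rightarrow$(i), I would invoke Proposition~\ref{prop:main6.1}: from the $\gamma$-equivariant, $C(\X)$-preserving isomorphism $\Phi$ one obtains an isomorphism $\varphi\colon\G_{\Lambda_1}\to\G_{\Lambda_2}$ of \'etale groupoids with $d_{\Lambda_2}\circ\varphi = d_{\Lambda_1}$, together with the homeomorphism $h = \varphi|_{\G_{\Lambda_1}^{(0)}}$ satisfying $\Phi(f) = f\circ h^{-1}$ on $C(\X_{\Lambda_1})$. Since $\gamma$-equivariance forces $\Phi$ to carry the fixed-point algebra $\F_{\Lambda_1} = (\R_{\Lambda_1})^{\gamma_{\Lambda_1}}$ onto $\F_{\Lambda_2}$, Lemma~\ref{lem:groupoidC*6.2} applies and, because Renault's reconstruction of the groupoid from a diagonal-preserving isomorphism is canonical for essentially principal groupoids, the very same $\varphi$ also satisfies $\varphi(R_{\Lambda_1}) = R_{\Lambda_2}$. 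Finally, the hypothesis $\Phi(C(\Lambda_1)) = C(\Lambda_2)$ restricts $\Phi$ to an isomorphism $C(\Lambda_1)\to C(\Lambda_2)$, whose Gelfand dual is a homeomorphism $h_\Lambda\colon\Lambda_1\to\Lambda_2$; compatibility of $\Phi$ with the inclusions $\pi_i^*\colon C(\Lambda_i)\hookrightarrow C(\X_{\Lambda_i})$ dualizes to a commuting square of spaces, that is, $h_\Lambda\circ\pi_1 = \pi_2\circ h$, as required.

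The step I expect to be the main obstacle is ensuring that the single groupoid isomorphism $\varphi$ furnished by Renault's reconstruction theorem simultaneously carries all three pieces of structure---the $\Z$-grading ($d_{\Lambda_2}\circ\varphi = d_{\Lambda_1}$), the AF-subrelation ($\varphi(R_{\Lambda_1}) = R_{\Lambda_2}$), and the factor-map data ($h_\Lambda\circ\pi_1 = \pi_2\circ h$). The grading and the subrelation are handled separately by Proposition~\ref{prop:main6.1} and Lemma~\ref{lem:groupoidC*6.2}, and the point to be verified is that both produce \emph{the same} $\varphi$; this holds because the isomorphism reconstructed from a $C(\X)$-preserving $C^*$-isomorphism of the (essentially principal) groupoid $C^*$-algebras is unique. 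The factor-map compatibility is then a purely commutative statement about the nested diagonals $C(\Lambda_i)\subset C(\X_{\Lambda_i})$, and should pose no difficulty beyond bookkeeping.
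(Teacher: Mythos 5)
Your proposal is correct and follows essentially the same route as the paper: the paper's proof simply says to rerun the proof of Theorem \ref{thm:GRmain} (i.e.\ Lemma \ref{lem:fgPhi} for (i)$\Rightarrow$(ii) and Proposition \ref{prop:main6.1} via Lemma \ref{lem:groupoidC*6.2} for (ii)$\Rightarrow$(i)) while tracking the extra conditions, which is exactly what you do. Your explicit Gelfand-duality bookkeeping for $\Phi(C(\Lambda_1))=C(\Lambda_2)\leftrightarrow h_\Lambda\circ\pi_1=\pi_2\circ h$, and your observation that the groupoid isomorphism produced by Renault's reconstruction is canonical (in the paper this is automatic, since Proposition \ref{prop:main6.1} constructs its $\varphi$ through Lemma \ref{lem:groupoidC*6.2}), just make explicit what the paper leaves implicit.
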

\begin{proof}
Add the extra conditions $h_\Lambda\circ\pi_1 = \pi_2\circ h$ and 
$\Phi(C(\Lambda_1)) = C(\Lambda_2)$
to the statements (i) and (ii) in Theorem \ref{thm:GRmain}, respectively. 
By following the proof of Theorem \ref{thm:GRmain},
we may prove the assertion.
\end{proof}
Therefore we have the following theorem
by Proposition \ref{prop:topconjugate2} and 
    Proposition \ref{prop:topconjugate3}.
\begin{theorem}\label{thm:GRmain2}
Suppose that the dynamical systems 
$(\X_{\Lambda_i}, \sigma_{\X_{\Lambda_i}}), i=1,2$ are equivalently essentially free.   
Two-sided subshifts $\Lambda_1, \Lambda_2$ are topologically conjugate
if and only if  there exists an isomorphism  
$\Phi:  \R_{\Lambda_1} \longrightarrow \R_{\Lambda_2}$
of $C^*$-algebras such that 
$$
\Phi(C(\X_{\Lambda_1})) = C(\X_{\Lambda_2}), \qquad
\Phi(C(\Lambda_1)) = C(\Lambda_2), \qquad
\Phi\circ \gamma_{{\Lambda_1}_t} 
= \gamma_{{\Lambda_2}_t}\circ\Phi, \quad t \in \mathbb{T}.
$$
Hence the quadruplet  
$(\R_\Lambda, C(\X_\Lambda), C(\Lambda), \gamma_{\Lambda})$ is a complete invariant of topological conjugacy of subshift $\Lambda$ satisfying the condition 
 that $(\X_\L,\sigma_{\X_\L})$ is equivalently essentially free.
\end{theorem}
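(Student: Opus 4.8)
The plan is to obtain the theorem by concatenating Proposition \ref{prop:topconjugate2} with Proposition \ref{prop:topconjugate3}, since between them these two already bridge topological conjugacy of the subshifts and the $C^*$-algebraic data attached to the groupoids. The first observation I would make is that condition (ii) of Proposition \ref{prop:topconjugate2} is verbatim the same as condition (i) of Proposition \ref{prop:topconjugate3}: both ask for an isomorphism $\varphi:\G_{\Lambda_1}\longrightarrow\G_{\Lambda_2}$ of \'etale groupoids together with a homeomorphism $h_\Lambda:\Lambda_1\longrightarrow\Lambda_2$ satisfying $\varphi(R_{\Lambda_1})=R_{\Lambda_2}$, $h_\Lambda\circ\pi_1=\pi_2\circ h$ and $d_{\Lambda_2}\circ\varphi=d_{\Lambda_1}$, where $h=\varphi|_{\G_{\Lambda_1}^{(0)}}$ is the restriction of $\varphi$ to its unit space. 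Recognizing this coincidence is the whole linkage.

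With that identification in hand, the chain of equivalences is immediate. By Proposition \ref{prop:topconjugate2}, the topological conjugacy of $\Lambda_1$ and $\Lambda_2$ is equivalent to the groupoid statement appearing as its clause (ii). That statement is precisely clause (i) of Proposition \ref{prop:topconjugate3}, which in turn, under the standing hypothesis that both $(\X_{\Lambda_i},\sigma_{\X_{\Lambda_i}})$ are equivalently essentially free, is equivalent to clause (ii) of Proposition \ref{prop:topconjugate3}. The latter is exactly the existence of an isomorphism $\Phi:\R_{\Lambda_1}\longrightarrow\R_{\Lambda_2}$ with $\Phi(C(\X_{\Lambda_1}))=C(\X_{\Lambda_2})$, $\Phi(C(\Lambda_1))=C(\Lambda_2)$ and $\Phi\circ\gamma_{{\Lambda_1}_t}=\gamma_{{\Lambda_2}_t}\circ\Phi$ for all $t\in\T$, which is the $C^*$-algebraic condition asserted in the theorem. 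Splicing these three equivalences together yields that the two outermost conditions are equivalent, which is the asserted characterization of topological conjugacy.

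For the complete-invariant formulation, I would simply reread the equivalence just obtained: an isomorphism $\Phi$ respecting the quadruplet $(\R_\Lambda,C(\X_\Lambda),C(\Lambda),\gamma_\Lambda)$ exists exactly when the subshifts are conjugate. Hence this quadruplet, together with the intertwining of the dual actions $\gamma_\Lambda$, determines the conjugacy class of $\Lambda$ whenever $(\X_\Lambda,\sigma_{\X_\Lambda})$ is equivalently essentially free, which is precisely the completeness assertion.

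As for the main obstacle, there is essentially none at this level, since all of the genuine work has already been carried out in Propositions \ref{prop:topconjugate2} and \ref{prop:topconjugate3} (and, behind the latter, in Theorem \ref{thm:GRmain} and Renault's reconstruction theorem for essentially principal groupoids). The only point that demands a moment of care is to confirm that the two middle conditions truly coincide, in particular that the auxiliary homeomorphism $h_\Lambda$ and the compatibility $h_\Lambda\circ\pi_1=\pi_2\circ h$ carry the same meaning in both propositions, so that no hypothesis is silently dropped when passing from one to the other.
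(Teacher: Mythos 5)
Your proposal is correct and is essentially identical to the paper's own argument: the paper obtains Theorem \ref{thm:GRmain2} precisely by concatenating Proposition \ref{prop:topconjugate2} with Proposition \ref{prop:topconjugate3}, using exactly the observation you make that clause (ii) of the former coincides verbatim with clause (i) of the latter. Nothing is missing.
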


\medskip
Let $\LGBS$ be a $\lambda$-graph bisystem satisfying FPCC
and $\Lambda_\L$ be its presenting subshift.
Recall that 
$\GL'$ denotes its isotropy bundle of the groupoid $\GL$, that is, 
$$
\GL' = \{ \gamma \in \GL \mid s(\gamma) = r(\gamma) \}
=\cup_{n\in \Z}\{(x,n,x) \in \XL\times\Z\times\XL \mid  (\sigma_\L^n(x),x) \in R_\L\}.
$$
\begin{definition}
A subshift $\Lambda_\L$ is said to satisfy $\pi$-{\it condition}\/ (I)
if for any $\gamma \in \GL$ with $\gamma \not\in \GL'$
and
open neighborhood $U \subset \GL$ of $\gamma$,
there exists $\gamma_1 \in U$  such that 
$$
\pi(s(\gamma_1)) \ne \pi(r(\gamma_1)),$$
where
$\pi:\XL\longrightarrow \Lambda_\L$ is the factor map
defined by \eqref{eq:pi}.
\end{definition}
\begin{lemma}
Assume that the subshift $\Lambda_\L$ satisfies $\pi$-condition (I).
If $f \in \RL$ commutes with $C(\Lambda_\L)$, then 
$f$ vanishes outside $\Int(\GL')$.
\end{lemma}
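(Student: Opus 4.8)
The plan is to work with the canonical evaluation map that realizes any element of $\RL = C^*_{\red}(\GL)$ as a continuous function on the groupoid. Since $\GL$ is étale and amenable, there is a norm-decreasing linear map $j : \RL \longrightarrow C_0(\GL)$ which restricts to the identity on the dense subalgebra $C_c(\GL)$; I will regard $f \in \RL$ as the function $j(f)$ on $\GL$, so that the conclusion ``$f$ vanishes outside $\Int(\GL')$'' is read as $j(f)(\gamma) = 0$ for every $\gamma \notin \Int(\GL')$.

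First I would record the commutator formula. For $a \in C(\GL^{(0)}) = C(\XL)$ and $f \in C_c(\GL)$, a direct convolution computation using that $a$ is supported on the unit space gives $(a*f)(\gamma) = a(r(\gamma))f(\gamma)$ and $(f*a)(\gamma) = f(\gamma)a(s(\gamma))$, hence
\begin{equation*}
j(af - fa)(\gamma) = \bigl(a(r(\gamma)) - a(s(\gamma))\bigr)\, j(f)(\gamma).
\end{equation*}
Because $j$ is continuous and $C_c(\GL)$ is dense, this identity persists for every $f \in \RL$. Now suppose $f$ commutes with $C(\Lambda_\L)$. Viewing $C(\Lambda_\L) \subset C(\XL)$ as the functions of the form $g\circ\pi$, the displayed identity yields $\bigl(g(\pi(r(\gamma))) - g(\pi(s(\gamma)))\bigr)\, j(f)(\gamma) = 0$ for all $g \in C(\Lambda_\L)$ and all $\gamma \in \GL$. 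Since continuous functions separate points of the Hausdorff space $\Lambda_\L$, this forces $j(f)(\gamma) = 0$ whenever $\pi(r(\gamma)) \neq \pi(s(\gamma))$. Writing $N := \{\gamma \in \GL \mid \pi(r(\gamma)) \neq \pi(s(\gamma))\}$, which is open because $\pi\circ r$ and $\pi\circ s$ are continuous, I conclude that $j(f)$ vanishes on $N$.

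The key step is then a purely topological one: I claim $\GL \setminus \Int(\GL') \subseteq \overline{N}$, and $\pi$-condition (I) is exactly what makes this work. Fix $\gamma \notin \Int(\GL')$ and an open neighborhood $U$ of $\gamma$. Since $\Int(\GL')$ is the largest open subset of $\GL'$, the set $U$ cannot be contained in $\GL'$, so there is some $\gamma' \in U$ with $\gamma' \notin \GL'$. Applying $\pi$-condition (I) to $\gamma'$ with the neighborhood $U$ produces $\gamma_1 \in U$ with $\pi(s(\gamma_1)) \neq \pi(r(\gamma_1))$, that is, $\gamma_1 \in U \cap N$. As $U$ was arbitrary, $\gamma \in \overline{N}$. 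Because $j(f)$ is continuous and vanishes on $N$, it vanishes on $\overline{N}$, and in particular at every $\gamma \notin \Int(\GL')$, which is the assertion.

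I expect the delicate points to be bookkeeping rather than conceptual: justifying the commutator formula for arbitrary $f \in \RL$ (which rests on the continuity of the evaluation map $j$ together with the density of $C_c(\GL)$) and treating the boundary case $\gamma \in \GL' \setminus \Int(\GL')$ uniformly with the case $\gamma \notin \GL'$. The latter is absorbed automatically by the neighborhood argument above, since the hypothesis $\gamma \notin \Int(\GL')$ — rather than the stronger $\gamma \notin \GL'$ — is precisely what lets me locate a point off the isotropy bundle inside every neighborhood of $\gamma$ and then invoke $\pi$-condition (I) there.
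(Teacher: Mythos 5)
Your proof is correct and follows essentially the same route as the paper's: the same commutator identity $f(\gamma)\bigl(b(\pi(s(\gamma)))-b(\pi(r(\gamma)))\bigr)=0$ for $b \in C(\Lambda_\L)$, the same Urysohn-type separation of points, and the same topological step combining $\pi$-condition (I) with the observation that any open neighborhood of a point outside $\Int(\GL')$ must contain a point outside $\GL'$. The differences are organizational rather than conceptual: the paper argues by contradiction and only treats $f \in C_c(\GL)$, whereas you phrase the argument directly via the closure of the open set $N$ and, more carefully than the paper, extend the commutator identity from $C_c(\GL)$ to all of $\RL$ through the evaluation map $j$.
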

\begin{proof}
Suppose that
$f \in C_c(\GL)$ commutes with $C(\Lambda_\L)$.
Identifying $\XL$ with $\GL^{(0)}$, we have
for $a \in C(\XL)$ and $\gamma\in \GL$,
$$
(f*a)(\gamma) = f(\gamma) a(s(\gamma)), \qquad
(a*f)(\gamma) = a(r(\gamma))f(\gamma),
$$
so that 
$f*a = a*f $ if and only if 
$f(\gamma) (a(s(\gamma))-a(r(\gamma))) =0$
for all $\gamma \in \GL.$
In particular for $a = b\circ \pi$ for some $b \in C(\Lambda_\L)$,
we have
$f*b = b*f $ if and only if 
$f(\gamma) (b(\pi(s(\gamma)))-b(\pi(r(\gamma))) )=0$
for all $\gamma \in \GL.$ 
By the assumption that 
$f \in C_c(\GL)$ commutes with $C(\Lambda_\L)$,
we have 
\begin{equation}
f(\gamma) (b(\pi(s(\gamma))-b(\pi(r(\gamma))) =0\quad
\text{ for all } \quad b \in C(\Lambda_\L),\,  \gamma \in \GL. \label{eq:fb1}
\end{equation}
Suppose that $f$ does not vanish outside  $\Int(\GL')$.
There exist $\gamma_0 \in \GL\backslash \Int(\GL')$
and an open neighborhood $U\subset \GL$ of $\gamma_0$
such that $f(\gamma) \ne 0$ for all $\gamma\in U$. 
As $\gamma_0 \not\in \Int(\GL')$, one may find $\gamma' \in U$
such that $\gamma' \not\in\GL'$.
By the  hypothesis that $\Lambda$ satisfies $\pi$-condition (I),
there exists $\gamma_1 \in U$  such that 
$$
\pi(s(\gamma_1)) \ne \pi(r(\gamma_1)).
$$
One may take $b_1 \in C(\Lambda_\L)$ such that 
$b_1(\pi(s(\gamma_1))) =1, 
b_1(\pi(r(\gamma_1))) =0$,
a contradiction to \eqref{eq:fb1}.
\end{proof}

\begin{lemma}\label{lem:CLambdaX}
Assume that $(\XL,\sigma_\L)$ is equivalently essentially free
and $\Lambda_\L$ satisfies $\pi$-condition (I).
Then we have 
\begin{equation*}
C(\Lambda_\L)' \cap \RL = C(\XL).
\end{equation*}
\end{lemma}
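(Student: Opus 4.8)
The plan is to establish the equality by proving the two inclusions separately, with essentially all of the work going into the inclusion $C(\Lambda_\L)' \cap \RL \subseteq C(\XL)$.

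The inclusion $C(\XL) \subseteq C(\Lambda_\L)' \cap \RL$ is immediate: the factor map $\pi$ identifies $C(\Lambda_\L)$ with a $C^*$-subalgebra of the commutative algebra $C(\XL)$, so every element of $C(\XL)$ commutes with $C(\Lambda_\L)$, and $C(\XL)\subseteq \RL$. (One could equally well note that $C(\XL)$ is maximal abelian in $\RL$ under the equivalently essentially free hypothesis, by the Renault-type lemma quoted above, but the direct observation suffices here.)

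For the reverse inclusion, I would take $f \in \RL$ commuting with $C(\Lambda_\L)$ and combine the two standing hypotheses. First, since $\Lambda_\L$ satisfies $\pi$-condition (I), the preceding lemma applies and shows that $f$ vanishes outside the interior $\Int(\GL')$ of the isotropy bundle $\GL'$. Second, since $(\XL, \sigma_\L)$ is equivalently essentially free, Lemma \ref{lem:essfree} gives that $\GL$ is essentially principal, that is, $\Int(\GL') = \GL^{(0)}$. Putting these together, $f$ vanishes outside the unit space $\GL^{(0)}$, which we identify with $\XL$.

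The hard part will be the final step, namely passing from the conclusion that $f$ vanishes off $\GL^{(0)}$ to the membership $f \in C(\XL)$, since elements of $\RL = C^*(\GL)$ are not genuinely functions on $\GL$. For this I would invoke the standard apparatus for \'etale groupoids (cf. \cite{Renault}): the canonical injective, norm-decreasing linear map $j \colon C^*_{\red}(\GL) \to C_0(\GL)$ extending the inclusion $C_c(\GL) \hookrightarrow C_0(\GL)$, together with the faithful conditional expectation $E \colon \RL \to C(\XL)$ given by restriction to the unit space (here reduced and full coincide because $\GL$ is amenable). The assertion that $f$ vanishes outside $\GL^{(0)}$ means precisely that $j(f)$ is supported on $\GL^{(0)}$; but $j(E(f))$ is the restriction $j(f)|_{\GL^{(0)}}$ extended by zero, so $j(E(f)) = j(f)$, and injectivity of $j$ forces $f = E(f) \in C(\XL)$. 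Combining the two inclusions yields $C(\Lambda_\L)' \cap \RL = C(\XL)$.
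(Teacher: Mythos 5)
Your proposal is correct and takes essentially the same route as the paper's own (very terse) proof: the easy inclusion $C(\XL) \subseteq C(\Lambda_\L)' \cap \RL$, then the preceding lemma (using $\pi$-condition (I)) to conclude that a commuting element vanishes outside $\Int(\GL')$, and finally equivalent essential freeness via Lemma \ref{lem:essfree} to identify $\Int(\GL')$ with the unit space $\GL^{(0)} = \XL$. The only difference is that you make explicit the last step---passing from ``vanishes off $\GL^{(0)}$'' to actual membership in $C(\XL)$ via the injective map $j\colon C^*_{\red}(\GL)\to C_0(\GL)$ and the faithful conditional expectation onto $C(\XL)$---which the paper leaves implicit; this also correctly handles the extension from $C_c(\GL)$ to general elements of $\RL$.
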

\begin{proof}
The inclusion relation 
$C(\Lambda_\L)' \cap \RL \supset C(\XL)$ is clear.
The other inclusion relation comes from the preceding lemma under the hypothesis
of equivalently essentially freeness of $(\XL,\sigma_\L)$.  
\end{proof}

\begin{theorem}\label{thm:GRmain3} 
Suppose that two subshifts $\Lambda_i, i=1,2$ 
satisfy  $\pi$-condition (I), and
their canonical $\lambda$-graph bisystems $\L_i =(\L_i^-,\L_i^+),i=1,2$ 
are equivalently essentially free. 
Then the two subshifts  $\Lambda_1, \Lambda_2$ are topologically conjugate
if and only if  there exists an isomorphism  
$\Phi:  \R_{\Lambda_1} \longrightarrow \R_{\Lambda_2}$ 
of $C^*$-algebras such that 
$$
\Phi(C(\Lambda_1)) = C(\Lambda_2), \qquad
\Phi\circ \gamma_{{\Lambda_1}_t} 
= \gamma_{{\Lambda_2}_t}\circ\Phi, \quad t \in \mathbb{T}.
$$
\end{theorem}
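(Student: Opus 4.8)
The plan is to reduce Theorem \ref{thm:GRmain3} to the already-established Theorem \ref{thm:GRmain2} by showing that, under $\pi$-condition (I), the intermediate algebra $C(\X_\Lambda)$ is \emph{automatically} recovered from the pair $(\R_\Lambda, C(\Lambda))$ as a relative commutant. Thus the single condition $\Phi(C(\Lambda_1)) = C(\Lambda_2)$ will force the condition $\Phi(C(\X_{\Lambda_1})) = C(\X_{\Lambda_2})$ appearing in Theorem \ref{thm:GRmain2}, so that the coarser data $(\R_\Lambda, C(\Lambda), \gamma_\Lambda)$ becomes a complete invariant.

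The only-if direction is immediate. If $\Lambda_1$ and $\Lambda_2$ are topologically conjugate, then Theorem \ref{thm:GRmain2} applies (its hypotheses coincide with ours, since the canonical $\lambda$-graph bisystems $\L_i = \L_{\Lambda_i}$ being equivalently essentially free is exactly the condition that $(\X_{\Lambda_i}, \sigma_{\X_{\Lambda_i}})$ is equivalently essentially free), producing an isomorphism $\Phi$ satisfying all three conditions $\Phi(C(\X_{\Lambda_1})) = C(\X_{\Lambda_2})$, $\Phi(C(\Lambda_1)) = C(\Lambda_2)$ and $\Phi\circ\gamma_{{\Lambda_1}_t} = \gamma_{{\Lambda_2}_t}\circ\Phi$; discarding the first of these yields exactly the weaker conclusion stated here.

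For the if direction I would start from an isomorphism $\Phi:\R_{\Lambda_1}\to\R_{\Lambda_2}$ with $\Phi(C(\Lambda_1)) = C(\Lambda_2)$ and $\Phi\circ\gamma_{{\Lambda_1}_t} = \gamma_{{\Lambda_2}_t}\circ\Phi$. The crucial input is Lemma \ref{lem:CLambdaX}, which under our two standing hypotheses (equivalent essential freeness of $(\X_{\Lambda_i},\sigma_{\X_{\Lambda_i}})$ and $\pi$-condition (I)) identifies $C(\X_{\Lambda_i})$ with the relative commutant $C(\Lambda_i)'\cap\R_{\Lambda_i}$ for $i=1,2$. Since any $C^*$-isomorphism carries relative commutants to relative commutants, the equality $\Phi(C(\Lambda_1)) = C(\Lambda_2)$ gives
\[
\Phi(C(\X_{\Lambda_1})) = \Phi\bigl(C(\Lambda_1)'\cap\R_{\Lambda_1}\bigr) = \Phi(C(\Lambda_1))'\cap\R_{\Lambda_2} = C(\Lambda_2)'\cap\R_{\Lambda_2} = C(\X_{\Lambda_2}).
\]
Hence $\Phi$ also satisfies $\Phi(C(\X_{\Lambda_1})) = C(\X_{\Lambda_2})$, so all three hypotheses of Theorem \ref{thm:GRmain2} are now in force, and applying that theorem yields the desired topological conjugacy of $\Lambda_1$ and $\Lambda_2$.

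The one step deserving care — and the only genuine obstacle — is the relative-commutant computation, namely verifying that $\Phi$ really does intertwine the two relative commutants; this rests entirely on Lemma \ref{lem:CLambdaX}. The substance of the argument is therefore the role of $\pi$-condition (I) hidden inside that lemma: it rules out the pathology of an element of $\R_\Lambda$ commuting with $C(\Lambda)$ yet supported off $\Int(\GL')$, thereby pinning the commutant down to exactly $C(\X_\Lambda)$. Everything else is a formal consequence, so no separate essential-freeness bookkeeping or groupoid reconstruction is needed beyond what Theorem \ref{thm:GRmain2} already supplies.
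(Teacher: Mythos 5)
Your proposal is correct and follows essentially the same route as the paper: the paper's own (very terse) proof also derives the only-if part from Theorem \ref{thm:GRmain2} and the if part from Theorem \ref{thm:GRmain2} combined with Lemma \ref{lem:CLambdaX}, which is precisely your relative-commutant argument recovering $C(\X_{\Lambda_i})$ as $C(\Lambda_i)'\cap\R_{\Lambda_i}$. Your write-up merely makes explicit the commutant-preservation step that the paper leaves implicit.
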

\begin{proof}
The only if part comes from Theorem \ref{thm:GRmain2}.
The if part is also from Theorem \ref{thm:GRmain2} 
together with Lemma \ref{lem:CLambdaX}.
\end{proof}

Let us restrict  our subshifts to topological Markov shifts.
Let $(\Lambda_A,\sigma_A)$ be the topological Markov shift 
defined by a square matrix $A$ with entries in $\{0,1\}$.
We denote the $C^*$-algebra $\R_{\mathcal{L}_{\Lambda_A}}$
and its action
$\gamma_{{\Lambda_A}_t}, t \in \mathbb{T}$ 
by
$\R_A$ and $\gamma_{A_t}$
respectively.
For a topological Markov shift  $(\Lambda_A,\sigma_A)$, 
the factor map $\pi: \X_{\Lambda_A} \longrightarrow \Lambda_A$
becomes injective and hence an isomorphism.
The canonical $\lambda$-graph bisystem 
$\L_{\Lambda_A}$ is regarded as the original directed graph defined by the matrix $A$.
Hence if the matrix $A$ is irreducible and not permutation,
then equivalently essentially freeness of $\L_{\Lambda_A}$ and $\pi$-condition (I)
are automatically satisfied. 
We have then the following theorem as a corollary of the above theorem,
that had been already shown in \cite{Ma2019JOT}.
\begin{theorem}[\cite{Ma2019JOT}] \label{thm:mainmarkov}
Let $A, B$ be irreducible, non-permutation  matrices with entries in $\{0,1\}$. 
Then the following two conditions are equivalent:
\begin{enumerate}
\renewcommand{\theenumi}{\roman{enumi}}
\renewcommand{\labelenumi}{\textup{(\theenumi)}}
\item
Their two-sided topological Markov shifts  
$(\Lambda_A, \sigma_A)$ and $(\Lambda_B, \sigma_B)$ are topologically conjugate.
\item
There exists an isomorphism 
$\Phi: \R_A \longrightarrow \R_B$
of $C^*$-algebras such that
$\Phi(C({\Lambda_A})) =C({\Lambda_B})$
and $\Phi\circ\gamma_{A_t}=\gamma_{B_t}\circ\Phi$,
$t \in \T$.
\end{enumerate}
\end{theorem}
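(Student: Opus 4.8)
The plan is to deduce this statement directly from Theorem \ref{thm:GRmain3} by checking that, for an irreducible, non-permutation matrix, the two standing hypotheses of that theorem are automatic. Recall from the discussion preceding the statement that for a topological Markov shift the factor map $\pi:\X_{\Lambda_A}\longrightarrow\Lambda_A$ is injective, hence a homeomorphism, and that the canonical $\lambda$-graph bisystem $\L_{\Lambda_A}$ is simply the ordinary directed graph of the matrix $A$. Under the identification $\X_{\Lambda_A}\cong\Lambda_A$ furnished by $\pi$, the shift $\sigma_{\X_{\Lambda_A}}$ is carried to the ordinary shift $\sigma_A$, the equivalence relation $R_{\Lambda_A}$ to (a subrelation of) the asymptotic equivalence relation $G^a_{\Lambda_A}$, and, crucially, $C(\X_{\Lambda_A})=C(\Lambda_A)$ as subalgebras of $\R_A$. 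In particular the condition $\Phi(C(\X_{\Lambda_1}))=C(\X_{\Lambda_2})$ of Theorem \ref{thm:GRmain2} is already subsumed by $\Phi(C(\Lambda_1))=C(\Lambda_2)$, which is why the cleaner statement of Theorem \ref{thm:GRmain3} is the right one to invoke.

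First I would dispose of $\pi$-condition (I), which is trivial here. Since $\pi$ is injective, $\pi(s(\gamma))\neq\pi(r(\gamma))$ holds precisely when $s(\gamma)\neq r(\gamma)$, i.e. exactly when $\gamma\notin\G_{\Lambda_A}'$. Thus, given any $\gamma\in\G_{\Lambda_A}$ with $\gamma\notin\G_{\Lambda_A}'$ and any open neighborhood $U$ of $\gamma$, the point $\gamma_1:=\gamma\in U$ itself witnesses $\pi(s(\gamma_1))\neq\pi(r(\gamma_1))$. Hence both subshifts $\Lambda_A,\Lambda_B$ satisfy $\pi$-condition (I).

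The substantive step is to verify that $(\X_{\Lambda_A},\sigma_{\X_{\Lambda_A}})$ is equivalently essentially free, equivalently (Lemma \ref{lem:essfree}) that $\G_{\Lambda_A}$ is essentially principal. Fix $n\in\Z$ with $n\neq 0$. Transporting along $\pi$ and using $R_{\Lambda_A}\subseteq G^a_{\Lambda_A}$, the set $\{x\in\X_{\Lambda_A}\mid(\sigma_{\X_{\Lambda_A}}^n(x),x)\in R_{\Lambda_A}\}$ is contained in the set of $x\in\Lambda_A$ that are asymptotic to $\sigma_A^n(x)$, namely the bi-infinite sequences with $x_{m+n}=x_m$ for all sufficiently large $|m|$; these are exactly the points whose two tails are periodic with period dividing $n$. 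I would show this larger set is nowhere dense: any basic cylinder in $\Lambda_A$ contains a point whose positive tail is not periodic, because $A$ irreducible and non-permutation forces some vertex to carry two distinct return loops, so irreducibility lets one extend any admissible word to an aperiodic admissible path in each direction (e.g. by concatenating the two loops according to a non-periodic pattern). Such a point lies in the cylinder but not in the eventually-periodic set, so no cylinder is contained in it; as cylinders form a basis, the interior is empty for every $n\neq 0$. This gives equivalent essential freeness of $(\X_{\Lambda_A},\sigma_{\X_{\Lambda_A}})$, and likewise for $B$.

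With both hypotheses verified for $A$ and $B$, Theorem \ref{thm:GRmain3} applies verbatim and yields the asserted equivalence between topological conjugacy of $(\Lambda_A,\sigma_A)$ and $(\Lambda_B,\sigma_B)$ and the existence of an isomorphism $\Phi:\R_A\longrightarrow\R_B$ with $\Phi(C(\Lambda_A))=C(\Lambda_B)$ and $\Phi\circ\gamma_{A_t}=\gamma_{B_t}\circ\Phi$ for $t\in\T$; this recovers \cite[Theorem 3.3]{Ma2019JOT}. The main obstacle is precisely the nowhere-density argument for equivalent essential freeness, where the hypothesis that $A$ be irreducible and not a permutation matrix is used in an essential way to produce aperiodic points inside every cylinder; every other ingredient is either quoted from Theorem \ref{thm:GRmain3} or immediate from the injectivity of $\pi$ in the Markov case.
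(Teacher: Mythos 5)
Your proposal is correct and follows the same route as the paper: the paper also deduces this theorem as a corollary of Theorem \ref{thm:GRmain3}, noting that for an irreducible non-permutation matrix the factor map $\pi$ is injective (hence a homeomorphism), the canonical $\lambda$-graph bisystem is the ordinary directed graph of $A$, and therefore $\pi$-condition (I) and equivalent essential freeness hold automatically. The only difference is that the paper asserts these two verifications without proof, whereas you supply the details (the trivial witness $\gamma_1=\gamma$ for $\pi$-condition (I), and the aperiodic-path/nowhere-density argument, via $R_{\Lambda_A}\subseteq G^a_{\Lambda_A}$, for essential freeness), which are standard and correct.
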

Hence the triplet
$
(\R_{A}, C({\Lambda_A}), \gamma_{A})
$
is a complete invariant for topological conjugacy for 
two-sided topological Markov shifts
$(\Lambda_A, \sigma_A)$.
We note that the corresponding result for the Cuntz--Krieger algebras 
are seen in Cuntz-Krieger\cite{CK} and  Carlsen-Rout \cite{CR}.


\medskip

{\it Acknowledgments:}
This work was  supported by JSPS KAKENHI Grant Numbers 15K04896, 19K03537.




\end{document}